\documentclass[12pt,twoside]{amsart}
\usepackage{amsmath,amssymb,graphicx}
\usepackage{hyperref}

\setlength{\topmargin}{0truecm}
\setlength{\headsep}{+1truecm}
\setlength{\oddsidemargin}{-.5truecm}
\setlength{\evensidemargin}{-.5truecm}
\setlength{\textwidth}{17.6truecm}
\setlength{\textheight}{22truecm}
\usepackage{color}
\usepackage[svgnames]{xcolor}

\pagestyle{myheadings}

\newtheorem{theorem}{Theorem}[section]
\newtheorem{lemma}[theorem]{Lemma}

\newtheorem{proposition}[theorem]{Proposition}
\newtheorem{corollary}[theorem]{Corollary}
\newtheorem{definition}[theorem]{Definition}
\newtheorem{rmrk}[theorem]{Remark}
\newtheorem{conjecture}[theorem]{Conjecture}

\DeclareMathAlphabet{\mathbfit}{OML}{cmm}{b}{it}

\makeatletter
\@addtoreset{equation}{section}
\makeatother

\newcommand{\newfig}[4] {
\bigskip
\begin{figure}[htbp]
  \centering
  \includegraphics[width=#3]{#2}
  \begin{minipage}[t]{0.80\linewidth} 
    \caption{#4}
    \protect\label{#1}
  \end{minipage}
\end{figure}
}

\newenvironment{remark}
{\begin{rmrk} \em}
{\end{rmrk}}


\newcommand{\fn} {function}

\newcommand{\R} {\mathbb{R}}
\newcommand{\C} {\mathbb{C}}

\newcommand{\Z} {\mathbb{Z}}
\newcommand{\N} {\mathbb{N}}

\newcommand{\ds} {\displaystyle}
\newcommand{\proofof}[1] {\noindent \textsc{Proof of {#1}.} }
\newcommand{\article}[3] {\textsc{{#1}}, {\itshape {#2}}, {{#3}}.}
\newcommand{\book}[3] {\textsc{{#1}}, {\itshape {#2}}, {{#3}}.}
\newcommand{\vol} {\textbf}



\newcommand{\rw} {random walk}
\newcommand{\sgn} {\mathrm{sgn}}
\newcommand{\anot} {{\alpha_o}}
\newcommand{\pup} {p_\uparrow}
\newcommand{\pdown} {p_\downarrow}
\newcommand{\pupup} {p_{\uparrow\uparrow}}
\newcommand{\loongrightarrow} {\,\xrightarrow{\hspace{27pt}}\,}



 \newcommand{\be}{\begin{equation}}
 \newcommand{\ee}{\end{equation}}

 \newcommand{\ba}{\begin{array}}
 \newcommand{\ea}{\end{array}}

 \newcommand{\bea}{\begin{eqnarray}}
 \newcommand{\eea}{\end{eqnarray}}

 \newcommand{\bl}{\begin{lemma}}
 \newcommand{\el}{\end{lemma}}

 \newcommand{\br}{\begin{remark}}
 \newcommand{\er}{\end{remark}}

 \newcommand{\bt}{\begin{theorem}}
 \newcommand{\et}{\end{theorem}}

 \newcommand{\bd}{\begin{definition}}
 \newcommand{\ed}{\end{definition}}

 \newcommand{\bcl}{\begin{claim}}
 \newcommand{\ecl}{\end{claim}}

 \newcommand{\bp}{\begin{proposition}}
 \newcommand{\ep}{\end{proposition}}

 \newcommand{\bc}{\begin{corollary}}
 \newcommand{\ec}{\end{corollary}}

 \newcommand{\bpr}{\begin{proof}}
 \newcommand{\epr}{\end{proof}}

 \newcommand{\bi}{\begin{itemize}}
 \newcommand{\ei}{\end{itemize}}

 \newcommand{\ben}{\begin{enumerate}}
 \newcommand{\een}{\end{enumerate}}



 \def \P {{\mathbb P}}
 \def \E {{\mathbb E}}


 \def \cN {\mathcal{N}}

 \def \cT {\mathcal{T}}


 \def \a {{\alpha}}
 \def \b {{\beta}}

 \def \e {{\varepsilon}}

 \def \z {{\z}}

 \def \t {{\tau}}

 \def \z {{\zeta}}

 \def \L {{\Lambda}}


\def \à {{\`{a}}}
\def \ì{{\`{\i}}}
\def \ò{{\`{o}}}
\def \è{{\`{e}}}
\def \ù{{\`{u}}}


 \def \1{\mathbbm{1}} 

 \def\var{\hbox{\rm Var}}




\begin{document}

\title[Multilayer random walk]
{Limit theorems and lack thereof for a multilayer \\ random walk mimicking 
human mobility}

\author{Alessandra Bianchi}
\address{Dipartimento di Matematica,
Universit\`a di Padova, Via Trieste 63,
35121 Padova, Italy.}\email{alessandra.bianchi@unipd.it}
\author{Marco Lenci}
\address{Dipartimento di Fisica e Astronomia, Universit\`a di Bologna,
Via Irnerio 46, 40126 Bologna, Italy \\
and Istituto Nazionale di Fisica Nucleare,
Sezione di Bologna, Viale Berti Pichat 6/2,
40127 Bologna, Italy.}
\email{marco.lenci@unibo.it}
\author{Fran\c coise P\`ene}
\address{Univ Brest, Universit\'e de Brest,
UMR CNRS 6205, LMBA, Laboratoire de Math\'ematique de Bretagne
Atlantique, 6 avenue Le Gorgeu, 29238 Brest cedex, France.}
\email{francoise.pene@univ-brest.fr}

\date{March 3, 2025}

\begin{abstract}
We introduce a continuous-time random walk model on an infinite multilayer structure inspired by transportation networks. Each layer is a copy of $\mathbb{R}^d$, indexed by a non-negative integer. A walker moves within a layer by means of an inertial displacement whose speed is a deterministic function of the layer index and whose direction and duration are random, but with a timescale that depends on the layer. After each inertial displacement, the walker may randomly shift level, up or down, independently of its past. The multilayer structure is hierarchical, in the sense that the speed is a nondecreasing function of the layer index. Our primary focus is on the diffusive properties of the system. Under a natural condition on the parameters of the model, we establish a functional central limit theorem for the $\mathbb{R}^d$-coordinate of the process. By contrast, in a class of examples where this condition is violated, we are able to determine the correct scaling of the process while proving that no limit theorem holds.
  
\bigskip\noindent
{\it Mathematics Subject Classification (2020)}: 60G50, 60F17, 60K50, (60J20, 60G51). 
 
\bigskip\noindent
{\it  Keywords}: random walks; multilayer networks; functional central limit theorem; invariance principle; martingales; stable processes.
\end{abstract}

\maketitle

\section{Introduction}
\label{intro}

Multilayer networks, a.k.a.\ multiplex networks, have become ubiquitous tools in the realm of complex systems to describe situations in which elements of the same system interact in different ways. Applications are found in Statistical Physics, Computer Science, Network Theory, Sociometry, Biology, etc. (A scant list of references, also related to the other subject of this paper, random walks, includes \cite{gczm, sdga, lxl, cxa, bgb, lal}; see also the many references therein.)

One important example is that of the transportation network for human mobility: one can think of locations (cities, neighborhoods, or individual addresses) as vertices of a graph, and different ways of connecting them (walkways, urban roads, highways, railways, airways, etc.)\ as different types of edges. This results in a graph with marked edges, equivalently a \emph{multilayer graph}, where each subgraph defined by edges of the same type is regarded a \emph{layer} of the whole graph. A network of this kind admits a natural order of the layers, from the densest and typically most uniformly connected, but also slowest --- say the urban roads --- to the sparsest and fastest --- say the airways. Thus, it is reasonable to refer to the layers also as \emph{levels}, labeling them by means of nonnegative integers.

A natural way to study the connectivity properties of such a transportation network is to run a random walk on it acting differently on different levels. For example, it makes sense to prescribe the random walker to travel faster on higher levels (a flight is faster than a train ride, which is faster than a car trip, etc.)\ and with different average times, depending on the level (say, a car trip on urban roads is typically shorter than a car trip on the highway system).

The model we present here is an abstraction of the structure just discussed, whose main feature is that both the space and the number of levels are infinite. This is physically unrealistic, but mathematically convenient for studying diffusion and other large-scale properties. The reference space is not a graph, at least not a standard one, but the system can understandably be referred to as a \emph{multilayer random walk}. It is a continuous-time process on $\R^d \times \N$, where $\R^d \times \{\ell\}$ plays the role of the $\ell^\mathrm{th}$ level. To each level are associated two positive numbers: $U_\ell$, which is the speed the process maintains while on the $\ell^\mathrm{th}$ level, and $\sigma_\ell$, a time-scale for the typical displacement on the $\ell^\mathrm{th}$ level. The walk starts in some point $(x_0, L_0)$. Then an $\R^d$-valued centered random variable $\xi_0$ is drawn, with finite, positive-definite covariance matrix. The walker starts to move on the level $L_0$, with velocity $U_{L_0} \xi_0/|\xi_0|$ for a time $\sigma_{L_0} |\xi_0|$. At the end of this \emph{inertial displacement} (that is, after reaching the point $x_0 + U_{L_0} \sigma_{L_0} \xi_0$), the walker has the opportunity to shift level. If $L_0>0$ (that is, the current level is not the bottom level), the walker can go up or down one level, or stay on the same level, with probabilities, respectively, $\pup, \pdown, 1-\pup-\pdown$. If $L_0=0$, the walker will go up one level with probability $\pupup$ or else stay on level 0. These choices are independent of everything else. The new level is labeled $L_1$. 
Then the procedure repeats, in the sense that a new variable $\xi_1$ is drawn, independent of and distributed as $\xi_0$, which determines the next inertial displacement $U_{L_1} \sigma_{L_1} \xi_1$, run at speed $U_{L_1}$. After this, the level is updated again, with the rule described earlier, and so on. See Fig.~\ref{fig1}.

\newfig{fig1}{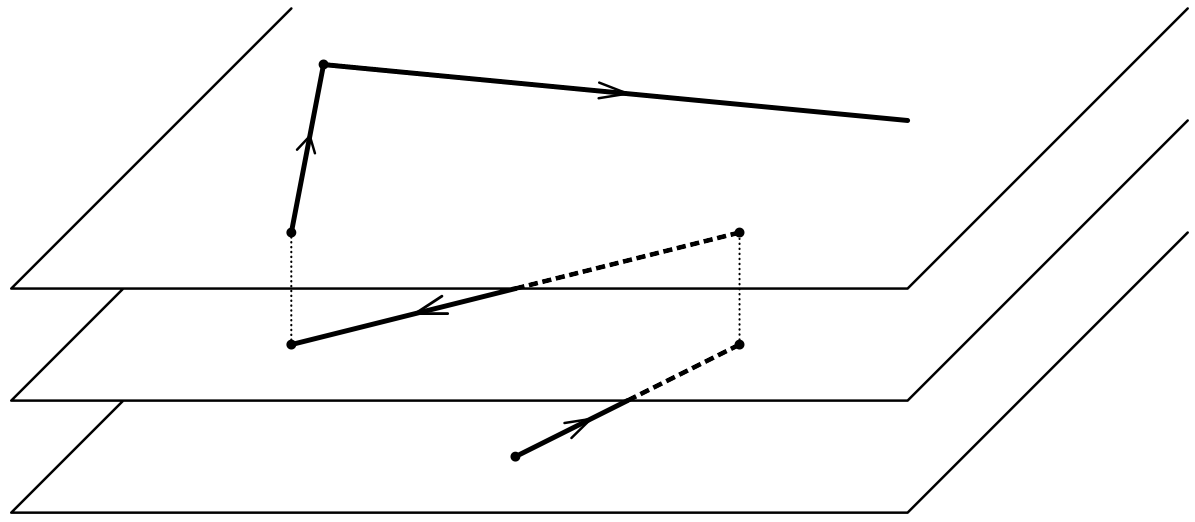}{9cm}{A trajectory of the process $W_t$, in the case $d=2$.}

In summary, our process is a continuous-time persistent \rw\ $W_t = \big(W^{(1)}_t, W^{(2)}_t \big)$ determined by the (deterministic) parameters $U_\ell, \sigma_\ell > 0$ ($\ell \in \N$) and two independent processes $(\xi_n)_{n \in \N}$, $(L_n)_{n \in \N}$. The first is a sequence of i.i.d.\ variables on $\R^d$, with zero average and finite positive-definite covariance matrices, and the second is a \rw\ on $\N$ with negative drift, because, for our model to make sense, we always assume that $\pdown > \pup > 0$. (After all, one tends to use cheaper and more widespread layers of a transport network when not too inconvenient.) In addition, according to our interpretation of the system, we assume that $U_\ell$ increases with $\ell$.

The level $W^{(2)}_t \in \N$ can well be construed as an internal state of the walker $W^{(1)}_t$ on $\R^d$. In this regard, our process has similarities with certain \emph{run-and-tumble} models for the motion of active matter that have been extensively studied in both the physical and mathematical literature; cf, e.g., \cite{dr, tss, zdk, ag, ssk, sbs, rv} and references therein --- see, in particular, the model of \cite{vvr}.

We are interested in the diffusive properties of our system, more specifically in limit theorems for the horizontal component of the process. The main results of the paper lie at two opposite ends of this spectrum:
\begin{enumerate}
  \item Under an integrability condition (cf.~Remark \ref{rk:firstcase}), $\big( W^{(1)}_t \big)_{t>0}$ satisfies the functional central limit theorem, a.k.a.\ \emph{invariance principle}, in $C([0,T])$, for all $T>0$. 
    
  \item\label{2nd-result} When the above condition is not satisfied, we give examples where any limit theorem fails. More precisely, while we identify the right scaling $n^{1/\a}$ for $W^{(1)}_t$ (in the sense that for $t>0$, $W^{(1)}_{nt}/b_n$ converges to 0 for all $b_n \gg n^{1/\a}$, while it spreads indefinitely for all $b_n \ll n^{1/\a}$) we show that $W^{(1)}_{nt}/n^{1/\a}$ does not converge. Paraphrasing the expression `strong anomalous diffusion' used in Statistical Physics \cite{cmmv, krs}, these examples exhibit quite \emph{strange} anomalous diffusion.
\end{enumerate}

The result (\ref{2nd-result}) uses, among other arguments, a computer-assisted proof (more precisely, an argument that needs a \emph{finite number} of computer-operated computations, which we do with abundant numerical precision, but not in interval arithmetic; anyone with a minimum experience with interval arithmetic software will be able to certify the computations). A byproduct of this proof is the rare occurrence of a random variable which, although naturally and quite simply defined in the context of a physical model, is not in the domain of attraction of a stable law.

\medskip

The paper is organized as follows. In Section 2 we define the model and state our main results, which are collected in Theorems \ref{th:firstcase}, \ref{THMcase2} and \ref{THMcase2-convergence}. In Section \ref{sec-sqint} we prove the functional limit theorem (Theorem \ref{th:firstcase}). In Section \ref{sec-nonsqint} we deal with a subclass of models where the assumptions of the limit theorem do not hold: we give quite a few detailed results about this case, to conclude with the proofs of Theorems \ref{THMcase2} and \ref{THMcase2-convergence}. Lastly, in Section \ref{sec-cap}, we present what amounts to be a computer-assisted proof that in many --- we believe all --- cases, only one of the alternatives of Theorem \ref{THMcase2-convergence} holds, namely, the non-convergence of the rescaled process (see also Conjecture \ref{conjecture}).

\subsection*{Acknowledgments.} 
The authors are grateful to Frank Redig for useful bibliographical suggestions.
A.B.\ was partially funded by the University of Padova through the BIRD project 239937 `Stochastic dynamics on graphs and random structures'.
M.L.\ was partially supported by the PRIN Grant 2022NTKXCX of the Ministry of University and Research (MUR), Italy. 
F.P.\ conducted this work within the framework of the Henri Lebesgue Center (ANR-11-LABX-0020-01), with the support of the Institut Brestois du Numérique et des Mathématiques (IBNM) and of the ANR project RAWABRANCH (ANR-23-CE40-0008).
This research is also part of A.B.\ and M.L.'s activities
respectively within GNAMPA and GNFM (INdAM, Italy).
The authors thank the Universities of Bologna, Brest, Padova, and the Scuola Normale Superiore di Pisa for their hospitality.

\section{Setup and main results}
\label{sec-setup}

Let $(\xi_k)_{k\in\N}$ be a sequence of i.i.d.\ random variables in $\R^d$ 
with zero average and finite, positive-definite covariance matrix $\Sigma$. For technical reasons, we also assume that $\P(\xi_k=0)=0$, although we are confident that the results presented in this paper hold as well without this assumption. Independent of this process, let $L=(L_n)_{n\in\N}$ be a Markov chain on $\N$ with the following transition probabilities:
\begin{align}
P(L_{k+1}=\ell \,|\, L_k=0) &=
\begin{cases}
  \pupup, & \mbox{if } \ell=1; \\
  1-\pupup,  & \mbox{if } \ell=0;
\end{cases} 
\\
\mbox{for } j>0, \quad
P(L_{k+1}=\ell \,|\, L_k=j) &=
\begin{cases}
  \pup, & \mbox{if } \ell=j+1; \\
  \pdown, & \mbox{if } \ell=j-1; \\
  1-\pup-\pdown, & \mbox{if } \ell=j.
\end{cases}
\end{align}
For any probability measure $\nu$ on $\N$, we denote by $P_\nu$ the law of the Markov chain $L$ with initial distribution $\nu$. A simple computation shows that $L$ possesses the unique stationary measure $\mu=(\mu_\ell)_{\ell\in\N}$ given by
\begin{equation} \label{def:stat-m}
  \mu_0 = \frac{\ds 1-\frac {\pup}{\pdown}} {\ds 1+
  \frac {\pupup-\pup}{\pdown}}, \qquad \mu_\ell = \frac{\pupup \, \pup^{\ell-1}}{\pdown^\ell}
  \, \mu_0, 
  \quad \mbox{for } \ell>0.
\end{equation}

For all $\ell \in \N$ two parameters are given, $U_\ell$ and $\sigma_\ell$, representing, respectively, the horizontal (i.e., in-layer) speed and the scale of the flight time of the process when at level $\ell$. We assume $\ell \mapsto U_\ell$ nondecreasing. Our process of interest $W := (W_t)_{t\ge 0} := \big(W^{(1)}_t , W^{(2)}_t \big)_{t\ge 0}$ is defined as follows.

First, the initial position is $(0,L_0)$, where $L_0$ has law $\nu$. (For a deterministic initial position $\ell_0$, put $\nu = \delta_{\ell_0}$.) Then, for $n\in\N$, set
\begin{equation}
  X_n := U_{L_n} \sigma_{L_n} \xi_n \,, \qquad \cT(n) := \sum_{k=0}^{n-1} 
  \sigma_{L_k} |\xi_k|
\end{equation}
(with the understanding that $\cT(0) := 0$). $X_n$ represents the $n^\mathrm{th}$ displacement of the walker, within the layer $L_n$, and $\cT(n)$ the time when it begins to take place. We refer to $n$ as the \emph{displacement time}. In order to define $W$, we need to pass from displacement time to absolute time. This is achieved by the change-of-time function
\begin{equation}\label{eq:timechange}
  \cT(s) := \int_0^s \sigma_{L_{\lfloor u \rfloor}} |\xi_{\lfloor u \rfloor}| 
  \, du,
\end{equation}
which is continuous and (strictly) increasing, except in the negligible case where $\xi_n=0$ for some $n$. For $t\ge 0$, set
\begin{equation} \label{eq:process}
  W^{(1)}_t := \int_{0}^{\cT^{-1}(t)} X_{\lfloor u \rfloor} \,du \,, 
  \qquad W^{(2)}_t := L_{\lfloor \cT^{-1}(t)\rfloor} .
\end{equation}
This completes the definition of $W$. Observe, for example, that 
\begin{equation} \label{eq:process-T}
  W_{\cT(n)} = \left(\sum_{k=0}^{n-1} X_k \,, L_n \right)
\end{equation}
as it should, at displacement time $n$. In the rest of the paper we denote by $\P_\nu := \P \otimes P_\nu$ the law of the whole process and by $\E_\nu$ its average.

We will be mostly concerned with the asymptotics of the first coordinate of our \rw. To this aim, let us introduce the process $M = (M_s)_{s\ge 0}$, where 
\begin{equation} \label{eq:genMartingale}
  M_s := \int_{0}^s X_{\lfloor u \rfloor} \, du = \sum_{k=0}^{\lfloor s\rfloor -1}X_k
 + \left(s-\lfloor s\rfloor\right)X_{\lfloor s \rfloor} \qquad (s\ge 0).
\end{equation}
Observe that 
\begin{equation} \label{eq:composition}
  W_t^{(1)} = M \circ \cT^{-1}(t) \qquad (t\ge 0).
\end{equation}
If one can prove, under suitable assumptions, that the restriction of $M$ to $\N$ is a discrete-time square-integrable martingale and, as $n\to \infty$, $\big( \cT^{-1}(nt)/n \big)_t$ converges to a multiple of the identity in a sufficiently strong sense, then one can hope to show that $\big( W_{nt}^{(1)} / \sqrt{n} \big)_t$ converges to a Brownian motion. This is the strategy of our first main result; see also Remark \ref{rk:firstcase} below.

\begin{theorem} \label{th:firstcase}
Assume
\begin{equation} \label{hyp-firstcase}
  \bar{v} := E_\mu\big[ U_{L_0}^2 \, \sigma_{L_0}^2 \big] = \sum_{\ell=0}^\infty \mu_\ell \,U_\ell^2 \, \sigma_\ell^2 < \infty.
\end{equation}
For any distribution $\nu$ on $\N$ (representing the initial distribution of the Markov chain $L$) and any $T>0$,
\begin{equation} \label{eq:th-conv1}
  \left( \sqrt{\frac{m}{\bar{v} n}} \, W^{(1)}_{nt} \right)_{t \in [0,T]}
  \overset{d}{\underset{n\to\infty}{\loongrightarrow}} 
  \left( B^\Sigma_t \right)_{t\in[0,T]}, \quad \mbox{in } C([0,T]), \quad
  \mbox{w.r.t.\ } \P_\nu\,,
\end{equation}
where $m :=\E_\mu\big[\cT(1)\big] = \E\big[ |\xi_0| \big] \sum_{\ell=0}^\infty 
  \mu_\ell \, \sigma_\ell < \infty$ and 
$\big( B^\Sigma_t \big)_{t\ge 0}$ denotes Brownian motion with covariance matrix $\Sigma$.  
\end{theorem}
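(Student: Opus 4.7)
The strategy is exactly the one hinted at before the theorem statement: establish (i) a martingale functional CLT for $M$, (ii) a strong law for the change of time $\cT$, and (iii) combine them via $W^{(1)}_t=M\circ\cT^{-1}(t)$ by a random time change argument.

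For (i), set $\cF_n:=\sigma(L_0,\ldots,L_n,\xi_0,\ldots,\xi_{n-1})$. Since $\xi_n$ is independent of $\cF_n$ with zero mean, and $L_n$ is $\cF_n$-measurable, $\E[X_n\mid\cF_n]=0$ and $\E[X_nX_n^\top\mid\cF_n]=U_{L_n}^2\sigma_{L_n}^2\,\Sigma$. Since $\pdown>\pup>0$, the chain $L$ is irreducible and positive recurrent with stationary law $\mu$, and assumption \eqref{hyp-firstcase} says that $\ell\mapsto U_\ell^2\sigma_\ell^2$ is $\mu$-integrable. Birkhoff's theorem thus yields $\frac1n\sum_{k=0}^{n-1}U_{L_k}^2\sigma_{L_k}^2\to\bar v$, $\P_\nu$-a.s., which gives the asymptotic bracket. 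The Lindeberg condition follows from the elementary inclusion
\[
\{|X_k|>\e\sqrt n\}\;\subseteq\;\{U_{L_k}\sigma_{L_k}>\sqrt{\e}\,n^{1/4}\}\,\cup\,\{|\xi_k|>\sqrt{\e}\,n^{1/4}\},
\]
combined with the ergodic theorem applied to the truncated function $\ell\mapsto U_\ell^2\sigma_\ell^2\,\1_{\{U_\ell\sigma_\ell>c\}}$ (controlled by \eqref{hyp-firstcase}, letting $c\to\infty$ after $n\to\infty$) and with the fact that $\E[|\xi_0|^2\,\1_{\{|\xi_0|>c\}}]\to 0$. The martingale FCLT (Jacod--Shiryaev) together with the piecewise linear continuity of the interpolation in \eqref{eq:genMartingale} then yields
\[
\Big(\tfrac1{\sqrt n}\,M_{ns}\Big)_{s\in[0,T]}\;\Longrightarrow\;\big(B^{\bar v\Sigma}_s\big)_{s\in[0,T]},\quad\text{in } C([0,T]),\ \text{w.r.t.\ }\P_\nu.
\]

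For (ii), Cauchy--Schwarz and $U_\ell\ge U_0>0$ give $\sum_\ell\mu_\ell\sigma_\ell\le \bar v^{1/2}/U_0<\infty$, so $\ell\mapsto\sigma_\ell$ is $\mu$-integrable and $m<\infty$. Because $(|\xi_k|)_k$ is i.i.d.\ with mean $\E[|\xi_0|]$ and independent of $L$, the ergodic theorem for the product $\sigma_{L_k}|\xi_k|$ yields $\cT(n)/n\to m$, $\P_\nu$-a.s. Since $\cT$ is continuous, strictly increasing, and diverges (using $\P(\xi_k=0)=0$ and $\sigma_\ell>0$), the convergence upgrades to the uniform almost-sure convergence $\cT^{-1}(nt)/n\to t/m$ on $[0,T]$.

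Finally, from the identity $W^{(1)}_{nt}=M(\cT^{-1}(nt))$, the joint convergence obtained in (i)--(ii), and Whitt's random time change theorem, we get
\[
\Big(\tfrac1{\sqrt n}\,W^{(1)}_{nt}\Big)_{t\in[0,T]}\;\Longrightarrow\;\big(B^{\bar v\Sigma}_{t/m}\big)_{t\in[0,T]}\;\stackrel{d}{=}\;\Big(\sqrt{\bar v/m}\,B^\Sigma_t\Big)_{t\in[0,T]}
\]
in $C([0,T])$, which is precisely \eqref{eq:th-conv1} after the stated normalization. I expect the main technical obstacle to be the clean verification of the Lindeberg condition, because the factor $U_{L_k}\sigma_{L_k}$ may be unbounded as $k$ ranges over $\N$ and so cannot be pulled out of the truncation of $|\xi_k|$; the splitting above decouples the two sources of growth and reduces everything to the $\mu$-integrability of $U_\ell^2\sigma_\ell^2$ and the $L^2$-integrability of $\xi_0$. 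The other steps are a fairly standard assembly of the martingale FCLT, the ergodic theorem, and the continuous random time change.
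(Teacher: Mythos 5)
Your proof is correct and structurally identical to the paper's: both write $W^{(1)}_{nt}=M\circ\cT^{-1}(nt)$, prove a functional CLT for the linearly interpolated process $M$, prove an a.s.\ functional LLN for $\cT^{-1}$, and compose via a random time change theorem. The only technical difference is in the FCLT step, where the paper conditions on $L$ and applies Einmahl's invariance principle for independent non-identically-distributed arrays (then averages over $L$ using $P_\nu\ll P_\mu$), whereas you invoke the unconditional martingale FCLT directly with respect to the natural filtration; the verification of the Lindeberg condition via truncation of $U_{L_k}\sigma_{L_k}$, the ergodic theorem, and square-integrability of $\xi_0$ is the same in both.
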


\begin{remark} \label{rk:firstcase}
The assumption (\ref{hyp-firstcase}) is precisely the one one would guess to implement the above-mentioned proof strategy. In fact, since $\mu$ is the equilibrium measure of the Markov chain $L$, (\ref{hyp-firstcase}) is equivalent to saying that, for all $n$, 
\begin{equation}
\E_\mu[ |X_n|^2 ] = \mathrm{Tr}(\Sigma) \sum_{\ell=0}^\infty \mu_\ell \, 
U_\ell^2 \, \sigma_\ell^2 < \infty 
\end{equation}
(recall that $\Sigma$ denotes the covariance matrix of $\xi_n$), making $M_n = \sum_{k=0}^{n-1} X_k$ a square-integrable martingale. Moreover, by Cauchy-Schwartz and the monotonicity of $\ell \mapsto U_\ell$,
\begin{equation}
  \left( \sum_{\ell=0}^\infty \mu_\ell \, \sigma_\ell \right)^2 \le 
  \sum_{\ell=0}^\infty \mu_\ell \, \sigma_\ell^2 \le \frac 1 {U_0^2}
  \sum_{\ell=0}^\infty \mu_\ell \, U_\ell^2 \,\sigma_\ell^2 < \infty,
\end{equation}
implying $m<\infty$. This gives a functional strong law of large numbers for $(\cT(s))_s$, which, together with other arguments, leads to the sought convergence for $\big( \cT^{-1}(nt)/n \big)_t$.
\end{remark}

Assumption (\ref{hyp-firstcase}) is not only reasonable for the diffusive behavior of our \rw, but somewhat optimal, in the sense that if it fails even by a slight margin, the behavior of $W$ can be substantially different. We show this point by working in detail on the class of examples defined by the following parameters:
\begin{itemize}
  \item $d:=1$ (the levels are one-dimensional);
  \item $U_\ell:=\Lambda^\ell$, for some $\Lambda>1$ (the speed increases exponentially with the level);
  \item $\sigma_\ell \equiv 1$ (the average flight time is the same on each level);
  \item $\xi_\ell$ are standard Gaussian random variables (in particular $\var (\xi_n) = 1$);
  \item $\pup + \pdown = \pupup = 1$ (no lazy component for the walk among the levels); 
  \smallskip
  \item $1 < \frac \pdown \pup < \Lambda^2$ (if $\frac \pdown \pup > \Lambda^2$, (\ref{hyp-firstcase}) would hold).
\end{itemize}

The exponent
\begin{equation} \label{def:alpha}
  \alpha:=\frac{\log(\pdown/\pup)}{\log \Lambda} \in (0,2)
\end{equation}
will play a major role in what follows. Observe for the moment that 
\begin{equation}
  \E_\mu \big[ |X_n|^\b \big] = \E_\mu \big[ \L^{\b L_n} |\xi_n|^\b \big] = 
  \left( \mu_0 + \mu_1 \frac \pdown \pup \sum_{\ell=1}^\infty \left( 
  \L^\b \frac \pup \pdown \right)^\ell \right) \E\big[ |\xi_0|^\b \big] ,
\end{equation}
cf.~(\ref{def:stat-m}), is finite if and only if $\b<\a$.

The next two results state that there is no sequence $(a_n)_{n\in\N}$ such that
$(M_n/a_n)_n$ converges in distribution to a non-degenerate limit, and yet the right scaling for $M_n$ is $n^{1/\a}$. In the following, we will use the notation $a_n\ll b_n$, for two positive sequences, to mean that $\lim_{n\to\infty}\frac{a_n}{b_n}=0$.

\begin{theorem}[Scaling] \label{THMcase2}
Let $\nu$ be any probability on $\N$ and $(b_n)_{n\in\N}$ be a divergent sequence of positive numbers. If $b_n\gg n^{1/\a}$, then
\begin{equation}\label{upper}
\frac{M_n}{b_n}\overset{\P_\nu}{\underset{n\to\infty}{\loongrightarrow}}0\,.
\end{equation}
Furthermore, if $b_n\ll n^{\a}$,
\begin{equation}\label{lower}
\forall r>0, \qquad \limsup_{n\to\infty} \, \P_\nu\left(|M_n/b_n|>r \right) =1 \, .
\end{equation}
\end{theorem}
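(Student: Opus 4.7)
My plan is to treat the two bounds separately, exploiting that $X_k=\Lambda^{L_k}\xi_k$ with $(\xi_k)$ i.i.d.\ standard Gaussian independent of $L$, and that the equilibrium is geometric, $\mu_\ell\propto\Lambda^{-\alpha\ell}$.

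For the upper bound (\ref{upper}) the plan is a truncation at the threshold $b_n$. Write $X_k=Y_k+Z_k$ with $Y_k:=X_k\mathbf{1}_{\{|X_k|\le b_n\}}$. Combining the geometric tail of $\mu$ with the Gaussian decay of $\xi_k$ yields $\P_\mu(|X_0|>t)\asymp t^{-\alpha}$, so by a union bound $\P_\mu(\exists\,k<n:\,Z_k\ne 0)\lesssim n/b_n^{\alpha}\to 0$ under the hypothesis $b_n\gg n^{1/\alpha}$. Conditionally on the trajectory of $L$ the variables $Y_k$ are independent and centered by the symmetry of $\xi_k$, so cross-covariances vanish and $\var\bigl(\sum_{k<n}Y_k\bigr)=\sum_{k<n}\E[X_k^2;|X_k|\le b_n]$. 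Karamata's theorem applied to the regularly varying tail above gives $\E_\mu[X_0^2;|X_0|\le b_n]\asymp b_n^{2-\alpha}$, hence the variance is $\lesssim n b_n^{2-\alpha}$ and Chebyshev yields $\P_\mu(|\sum Y_k|>\varepsilon b_n)\lesssim n/(\varepsilon^2 b_n^{\alpha})\to 0$. For general $\nu$ the geometric ergodicity of $L$ transfers these bounds up to a vanishing additive error.

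For the lower bound (\ref{lower})---which I read with the natural hypothesis $b_n\ll n^{1/\alpha}$ (the stated $n^\alpha$ appears to be a typo in light of the paragraph preceding the theorem)---I would exploit the conditional Gaussianity of $M_n$ given $L$. Since $(\xi_k)$ is independent of $L$, we have $M_n\mid L\sim\mathcal{N}(0,S_n)$ with $S_n:=\sum_{k<n}\Lambda^{2L_k}$, so $\P_\nu(|M_n|>rb_n\mid L)=2\bar\Phi\bigl(rb_n/\sqrt{S_n}\bigr)$. It is therefore enough to show that $S_n/b_n^{2}\to\infty$ in $\P_\nu$-probability, after which dominated convergence gives $\P_\nu(|M_n|>rb_n)\to 1$. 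For this one uses the trivial bound $S_n\ge\Lambda^{2 H_n}$ with $H_n:=\max_{k<n}L_k$, together with the classical estimate for a positive-recurrent nearest-neighbor chain: an excursion from $0$ reaches level $\ell$ with probability $\asymp(\pup/\pdown)^{\ell}=\Lambda^{-\alpha\ell}$, and there are of order $n$ such excursions up to time $n$, giving $\P_\nu(H_n\ge\ell_n)\to 1$ whenever $\Lambda^{\ell_n}=o(n^{1/\alpha})$. Since $b_n=o(n^{1/\alpha})$ one can interpolate $b_n\ll\Lambda^{\ell_n}\ll n^{1/\alpha}$, which gives $S_n\ge\Lambda^{2\ell_n}\gg b_n^{2}$ with probability tending to $1$, and hence (\ref{lower}) with $\limsup$ in fact upgraded to $\lim$.

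The delicate step, in my view, is the uniform (in $\nu$) control on $\P_\nu(H_n\ge\ell_n)$ for a \emph{diverging} $\ell_n$: one has to combine the geometric stationary tail with the renewal structure of excursions from $0$, without incurring a constant that depends unfavourably on the initial law. A coupling with the stationary chain together with a first-passage analysis on the ladder of excursions should do the job, but it requires some care.
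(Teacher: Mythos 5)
Your reading of the exponent is correct: the hypothesis in \eqref{lower} should be $b_n\ll n^{1/\a}$ (this is what the paper's own proof uses), so the $n^{\a}$ in the statement is a typo. Your argument is essentially correct, but it follows a genuinely different route from the paper's on both halves. For \eqref{upper}, the paper does not truncate: it conditions on $L$, applies Chebyshev with the conditional variance $V_n=\sum_{j<n}\Lambda^{2L_j}$, and invokes Corollary \ref{biggamma} --- which rests on the uniform boundedness of $\ell(\theta)=(1-\varphi_Z(\theta))/|\theta|^{\a/2}$ from Proposition \ref{asymptphi} --- to get $V_n/b_n^2\to 0$ in probability. Your truncation-plus-Chebyshev argument is more elementary and self-contained; note only that you should not invoke Karamata (the tail of $|X_0|$ under $P_\mu$ is $\asymp t^{-\a}$ but may carry log-periodic oscillation, so it need not be regularly varying), whereas the one-sided bound $\E_\mu[X_0^2;|X_0|\le b]\lesssim b^{2-\a}$ that you actually need follows from the tail bound alone by integration by parts. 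For \eqref{lower}, the paper establishes the limit only along the lacunary subsequence $n_j=\lfloor\Lambda^{\a j}\rfloor$, via the distributional convergence $V_{n_j}/n_j^{2/\a}\to\widetilde Z$ (Corollary \ref{CoroCase2}) and the absence of an atom of $\widetilde Z$ at $0$ --- hence the $\limsup$ in the statement. Your route via $V_n\ge\Lambda^{2\max_{k<n}L_k}$, the gambler's-ruin estimate $\asymp(\pup/\pdown)^{\ell}$ for the excursion maximum, and the choice $b_n\ll\Lambda^{\ell_n}\ll n^{1/\a}$ bypasses all of the characteristic-function machinery, works along the full sequence, and correctly yields the stronger conclusion $\lim_n\P_\nu(|M_n/b_n|>r)=1$.

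The one step that needs more than you have written is the passage from $P_\mu$ (or $P_0$) to an arbitrary initial law $\nu$. ``Geometric ergodicity up to a vanishing additive error'' is not sufficient as stated: bounding $\sum_{k<n}\big|\P_\nu(|X_k|>b_n)-\P_\mu(|X_0|>b_n)\big|$, or the analogous difference of truncated second moments, by $\sum_k d_{TV}(\nu P^k,\mu)$ only produces a constant (respectively a constant times $b_n^2$), not $o(1)$, because the functionals involved are unbounded and heavy-tailed. The argument is repairable --- either by a diagonal truncation in $k$ (for $k\le K_n$ use $\P_\nu(|X_k|>b_n)\to0$ termwise, for $k>K_n$ use the TV bound), or, more cleanly and in a way that also settles the ``delicate step'' you flag for the lower bound, by stopping at the first hitting time $T$ of level $0$: $T<\infty$ $P_\nu$-a.s., $M_{T\wedge n}/b_n\to0$ a.s.\ as a fixed a.s.-finite sum, $M_n-M_{T\wedge n}$ given $T=t$ is distributed as $M_{n-t}$ under $\P_0$, and the post-$T$ maximum of $L$ dominates that of a chain started at $0$. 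This hitting-time/coupling device is exactly what the paper's Lemma \ref{lemZwei} (Zweim\"uller's theorem) packages for the distributional statements. With that repair, both halves of your proof are complete.
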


In Section \ref{sec-cap} we present Conjecture \ref{conjecture}, whose formulation is quite technical and irrelevant here, upon which hinges the asymptotics of $M_n/n^{1/\alpha}$. For the moment we just mention that it is equivalent to a certain variable not being $\a$-stable and that we prove it (with computer-assisted arguments) in quite a number of cases. We believe it to be always true.

\begin{theorem}[Convergence]\label{THMcase2-convergence}
The following dichotomy holds, for the limit $n \to \infty$ w.r.t.\ $\P_\nu$, for any $\nu$ (distribution of $L_0$).
\begin{enumerate}
\item \label{item1} If Conjecture \ref{conjecture} is true, then $M_n/n^{1/\a}$ does not converge in distribution. 

\item \label{item2} If Conjecture \ref{conjecture} is false, then $\big( M_{\lfloor nt\rfloor}/n^{1/\a} \big)_{t\ge 0}$ converges in the sense of finite-dimensional distributions to a symmetric stable process $(\mathcal Y_t)_{t\ge 0}$ with independent increments such that, for any $s\in\mathbb R$,  $\E[e^{i s\mathcal Y_1 }]=e^{-\widetilde c\mu_0 |s|^\a}$, with $\widetilde c>0$.
\end{enumerate}
\end{theorem}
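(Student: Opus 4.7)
The plan is to exploit the regenerative structure of the Markov chain $L$ to reduce the analysis of $M_n$ to that of a sum of i.i.d.\ random variables, and then to read off the dichotomy from the classical theory of limit laws for i.i.d.\ sums.

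First I would introduce the successive return times to the bottom level: $\tau_0 := 0$ and recursively $\tau_{i+1} := \inf\{k>\tau_i: L_k=0\}$. Since $\pdown > \pup$ and $0$ is reflecting, $\tau_1$ is a.s.\ finite and, in fact, has exponential moments. Setting $S_i := \sum_{k=\tau_{i-1}}^{\tau_i-1} X_k$, the pairs $(\tau_i-\tau_{i-1},\, S_i)_{i\ge 1}$ are i.i.d.\ under $\P_{\delta_0}$ (any other initial distribution $\nu$ only affects the first, transient block, which is negligible). By the ergodic theorem, $\E_{\delta_0}[\tau_1] = 1/\mu_0$, so the number $N_n$ of complete excursions by displacement time $n$ satisfies $N_n/n \to \mu_0$ a.s. A standard estimate relying on the exponential tail of $\tau_1$ together with the tail of $S_1$ computed below shows that the contribution of the last, partial excursion is $o_{\P_\nu}(n^{1/\alpha})$, and therefore that $M_n/n^{1/\alpha}$ converges in distribution if and only if $\sum_{i=1}^{\lfloor \mu_0 n\rfloor} S_i / n^{1/\alpha}$ does, to the same limit.

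The crux of the argument is the asymptotic evaluation of the tail of the i.i.d.\ block sum $S_1$. Conditioning on the maximum level $L^*:=\max_{0\le k<\tau_1}L_k$ reached during an excursion, using that $P(L^*=j)$ decays geometrically at rate $\pup/\pdown=\Lambda^{-\alpha}$, and using the Gaussianity of the $\xi_k$'s at the highest visited level, one obtains an asymptotic expansion of the form
\begin{equation*}
\P(|S_1|>t) \;=\; t^{-\alpha}\,\bigl(\Psi(\log_\Lambda t)+o(1)\bigr) \qquad (t\to\infty),
\end{equation*}
for some bounded, $1$-periodic function $\Psi:\R\to[c_1,c_2]$ with $0<c_1\le c_2<\infty$. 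The precise, computer-verifiable statement of Conjecture \ref{conjecture} is equivalent to the assertion that $\Psi$ is not constant, i.e.\ that $S_1$ is \emph{not} in the domain of attraction of any stable law (regular variation of the tail with index $-\alpha$ would force $\Psi$ constant). Because $\Psi$ is bounded above and below by positive constants, the scaled sum $\sum_{i=1}^{\lfloor\mu_0 n\rfloor}S_i/n^{1/\alpha}$ is tight and its subsequential limits are non-degenerate (non-degeneracy is also guaranteed by the lower bound in Theorem \ref{THMcase2}).

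The dichotomy is now immediate. If Conjecture \ref{conjecture} fails, $\Psi$ is constant, $S_1$ lies in the domain of attraction of a symmetric $\alpha$-stable law (symmetry is inherited from the $\xi_k$), and the Gnedenko--Kolmogorov theorem gives $\sum_{i=1}^{\lfloor\mu_0 n\rfloor}S_i/n^{1/\alpha}\to \mathcal{Y}_1$ in distribution, with $\E[e^{is\mathcal{Y}_1}]=e^{-\widetilde c\mu_0|s|^\alpha}$ for some $\widetilde c>0$; finite-dimensional convergence with independent increments follows from the i.i.d.\ block decomposition and the linear time-change $N_{\lfloor nt\rfloor}/n\to\mu_0 t$, yielding item (\ref{item2}). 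If on the contrary Conjecture \ref{conjecture} holds, the classical converse --- that any non-degenerate distributional limit of normalized i.i.d.\ sums must be a stable law whose domain of attraction contains the summand --- forbids the existence of any such limit; hence any two subsequential limits of $\sum_{i=1}^{\lfloor\mu_0 n\rfloor}S_i/n^{1/\alpha}$ must differ, and convergence in distribution fails, giving item (\ref{item1}). The principal obstacle will be the fine asymptotic analysis producing the log-periodic tail formula above with a remainder strong enough to equate the non-constancy of $\Psi$ with the finite-dimensional identity that Section \ref{sec-cap} verifies on a computer.
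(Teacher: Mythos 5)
Your overall architecture is parallel to the paper's: both proofs rest on the regenerative structure at returns to level $0$, an i.i.d.\ block decomposition, a law of large numbers for the number of excursions, and the classical principle that a non-degenerate limit of normalized i.i.d.\ sums must be stable with the summand in its domain of attraction. The difference is the key analytic object: the paper never touches the tail of the spatial block sum. Instead it uses the conditional Gaussianity of $M_n$ given $L$ to reduce everything to the conditional variance $V_n=\sum_{j<n}\Lambda^{2L_j}$, decomposes $V$ over excursions into i.i.d.\ copies of $Z=V_{\tau_0}$, derives an exact functional equation for $\varphi_Z$ from the branching structure of an excursion, and from it proves (Proposition \ref{asymptphi}) that $(1-\varphi_Z(\theta/\Lambda^{2n}))\Lambda^{n\alpha}\to c_\theta|\theta|^{\alpha/2}$ exponentially fast --- a statement about the characteristic function along geometric subsequences only. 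The Conjecture is literally the non-constancy of this $c_\theta$.

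The genuine gap in your proposal is the displayed tail asymptotic $\P(|S_1|>t)=t^{-\alpha}(\Psi(\log_\Lambda t)+o(1))$ with $\Psi$ periodic, together with the claim that its non-constancy is \emph{equivalent} to Conjecture \ref{conjecture}. You assert both and prove neither, and they carry essentially all of the difficulty. A genuine tail asymptotic with an $o(1)$ remainder is strictly stronger than what the paper establishes: passing between a log-periodically oscillating transform asymptotic and a log-periodically oscillating tail asymptotic is exactly where one-sided Tauberian theorems fail, which is why the paper confines itself to characteristic functions and Laplace transforms throughout (e.g.\ $\varphi_{M_n/n^{1/\alpha}}(\theta)=E_0[e^{-\theta^2V_n/(2n^{2/\alpha})}]$, Eqn.~\eqref{carfunc}). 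Moreover, since the theorem is conditional on the Conjecture \emph{as stated in terms of $c_\theta$}, your proof must connect $\Psi$ to $c_\theta$; the natural bridge is $\varphi_{S_1}(s)=E_0[e^{-s^2Z/2}]$, but once you go through that bridge you are led back to the paper's transform-level argument (non-convergence of $\sum_k Z_k/n^{2/\alpha}$ when $c_\theta$ is non-constant, Corollary \ref{CoroCase3}), and the tail formula becomes both unnecessary and the hardest possible route. The remaining ingredients of your sketch (negligibility of the straddling excursion, symmetry of the limit, the time change $N_{\lfloor nt\rfloor}/n\to\mu_0 t$) are sound and match the paper's treatment via the monotone sandwich \eqref{Vtau}.
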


\section{Proof of Theorem \ref{th:firstcase}}
\label{sec-sqint}

The following proof implements the ideas presented in Remark \ref{rk:firstcase}.

\begin{proof}
In view of \eqref{eq:composition}, which represents the process $W^{(1)}$ as a composition, we start by deriving a functional central limit theorem for the suitably rescaled process $M=(M_s)_{s\ge 0}$, which, by construction, is a continuous-time interpolation of the square-integrable  martingale $(M_n)_{n\in\mathbb{N}}$. Specifically, let us consider
\begin{equation}
\widetilde{M}_n(s):=\frac 1{\sqrt{n}}\int_0^{ns} X_{\lfloor u\rfloor} du
\qquad (s\ge 0).
\end{equation}
We claim that, for every $T>0$,
\begin{equation} \label{claim1}
\left( \widetilde{M}_n(s) \right)_{s\in[0,T]}\overset{d}{\underset{n\to\infty}{\loongrightarrow}} 
\left(\sqrt{\bar{v}}B_s^{\Sigma}\right)_{s\in[0,T]},\quad \mbox{in } C([0,T]) \,,
\quad \mbox{w.r.t.\ } \mathbb{P}_{\nu}\,.
\end{equation}
Setting 
\begin{equation}
v(t):=\int_0^t (U_{L_{\lfloor u\rfloor}}\sigma_{L_{\lfloor u\rfloor}})^2 \, du \,,
\end{equation}
we observe that $v(n)\Sigma$ is the conditional covariance matrix of $M_n$ given $L=(L_k)_{k\in\N}$. Moreover, conditionally to $L$, the random variables $X_i/\sqrt{v(n)}$ are independent, so we can apply \cite[Cor.~4]{Einmahl}. In detail, using the notation therein, let us implicitly define $S_{(n)}$ by the following:
\begin{equation} \label{tildeM}
\widetilde M_n(s)= \sqrt{\frac{v(n)}{n}} \, S_{(n)} \! \left( \frac{v(ns)}{v(n)} \right) .
\end{equation}
By \cite[Cor.~4]{Einmahl}, as $n\to\infty$, $(S_{(n)}(t))_t$ converges in distribution to $B^\Sigma$, in the uniform topology, provided the following condition is satisfied:
\begin{equation} \label{HH'}
\forall \eta>0, \qquad w_n:=\sum_{i=0}^{n-1} \, \E_\mu \! \left[ \left. \frac{|X_i |^2}{v(n)} \mathbf 1_{\{|X_i|>\eta \sqrt{v(n)}\}} \right| L\right] \underset{n\to \infty}{\loongrightarrow} 0\, .
\end{equation}

To this end, 
treating separately the case $U_{L_i}\sigma_{L_i}>K$
and the case $U_{L_i}\sigma_{L_i}\le K$, we observe that, for all $K>0$,
\begin{equation}
\begin{split}
w_n &= \frac 1{v(n)}\sum_{i=0}^{n-1}(U_{L_i}\sigma_{L_i})^2 \, \E_\mu \! \left[\left. |\xi_0|^2\mathbf 1_{\{U_{L_i}\sigma_{L_i}|\xi_0|>\eta \sqrt{v(n)}\}} \right|L \right] \\
&\le \frac{n}{v(n)}\frac{1}{n}\sum_{i=0}^{n-1}\left[(U_{L_i}\sigma_{L_i})^2\mathbf 1_{\{U_{L_i}\sigma_{L_i}>K\}} \, \E[\xi_0^2]+(U_{L_i}\sigma_{L_i})^2 \, \E\!\left[|\xi_0|^2\mathbf 1_{\{K|\xi_0|>\eta   \sqrt{v(n)}\}}\right]\right]\, .
\end{split}
\end{equation}
By ergodicity, and using that $\lim_{n\to \infty}\frac{v(n)}n=\bar{v}$,
it follows that, $P_\mu$-almost surely, for all $K>0$,
\begin{equation}
\begin{split}
\limsup_{n \to \infty}w_n &\le\frac{E_\mu\! \left[ (U_{L_0}\sigma_{L_0})^2\mathbf 1_{\{U_{L_0}\sigma_{L_0}>K\}} \right] \mathbb E[\xi_0^2]}{\bar{v}}\\
&\quad\quad +\frac{ E_\mu[(U_{L_0}\sigma_{L_0})^2] \, \ds \limsup_{n \to \infty} \, \E\!\left[|\xi_i|^2\mathbf 1_{\{K|\xi_i|>\eta \sqrt{v(n)}\}}\right]}{\bar{v}}\\
&\le \frac{ E_\mu \!\left[ (U_{L_0}\sigma_{L_0})^2\mathbf 1_{\{U_{L_0}\sigma_{L_0}>K\}} \right] \mathbb E[\xi_0^2]}{\bar{v}}\, .
\end{split}
\end{equation}
Taking $K\to +\infty$, we infer that \eqref{HH'} holds true $P_\mu$-almost surely and so $(S_{(n)}(t))_t$ converges in distribution to $B^\Sigma$, for the uniform topology on compact sets, with respect to $\mathbb P$. Combining this convergence with the $P_\nu$-a.s.\ convergence of $\big(\frac{v(ns)}n \big)_s$ to $ \bar{v} \,\mathrm{id}$ 
for the uniform topology on compact sets (since $\lim_{n \to \infty}\frac{v(n)}n=\bar{v}$ and using also that $\frac{v(n)}n$ is bounded), in view of~\eqref{tildeM} we conclude that, $P_\mu$-almost surely, $\widetilde M_n$ conditioned to $L$ converges in distribution to $\sqrt{\bar{v}}B^\Sigma$ (for the uniform topology, w.r.t.~$\mathbb P$).
Taking the expectation with respect to $P_\nu \ll P_\mu$ ends the proof of~\eqref{claim1}.

As a second step, let us consider the process
$\widetilde{\cT}^{-1}_n(u):= \frac{\cT^{-1}(nu)}{n}$, for $u\ge 0\,.$
We claim that 
\begin{equation} \label{claim2}
\left( \widetilde{\cT}^{-1}_n(u) \right)_{u\in[0,T]} \underset{n\to\infty}{\loongrightarrow} \left( \frac u m \right)_{u\in[0,T]}, \quad \mbox{in } C[0,T], \quad \P\mbox{-a.s.} 
\end{equation}
Indeed, by the strong law of large numbers together with the Slutsky Lemma, first note that
\begin{equation}
\forall t\ge 0, \qquad \frac{\cT(nt)}{n}=\frac 1n \int_0^{nt}\sigma_{L_{\lfloor u\rfloor}}|\xi_{\lfloor u\rfloor}| \, du \underset{n\to\infty}{\loongrightarrow} m t\,, \quad \P\mbox{-a.s.} 
\end{equation}
Taking the inverse function, we then infer (classical argument, see, e.g., \cite[Cor.~3.4.1]{WW2}) that
\begin{equation} \label{T-LLN}
\forall u\ge 0, \qquad
\widetilde{\cT}^{-1}_n(u)=\frac{\cT^{-1}(nu)}{n} \underset{n\to\infty}{\loongrightarrow} \frac u m \,,
\quad \P\mbox{-a.s.} 
\end{equation}
By a standard argument (e.g., \cite[Cor.~3.2.1]{WW2}), the above pointwise convergence implies the uniform convergence in $C[0,T]$.

Finally, taking together the convergences \eqref{claim1} and \eqref{claim2}, and thanks to \eqref{eq:composition}, the main assertion \eqref{eq:th-conv1} follows from the application of \cite[Thm.~3.9]{Bill} combined with \cite[Lemma at p.~151]{Bill}.
\end{proof}

\section{Proofs of Theorems \ref{THMcase2} and \ref{THMcase2-convergence}} 
\label{sec-nonsqint}

When we deal with a stochastic process indexed by time that, when rescaled with the square root of time, does not converge to a Brownian motion together with its (most important) moments, we enter the realm of \emph{anomalous diffusion} \cite{krs, zdk}. In this business, it is known that a wide array of behaviors may occur, some of which are rather peculiar and even puzzling, such as that of the so-called \emph{L\'evy-Lorentz gas} \cite{bfk}, especially if compared to the \emph{L\'evy walk} with the same flight distribution \cite{bcv, bcll, acor, blp, sal}.

The proofs of Theorems \ref{THMcase2} and \ref{THMcase2-convergence}, and the results leading to them, will reveal an even stranger behavior for our model, when the condition for normal fluctuation is not verified.

\subsection{Strategy of the proofs}
From now on we will assume that the initial measure of the process $L=(L_k)_{k\in\mathbb{N}}$ is $\delta_{0}$ and denote the corresponding probability measure by $P_0 := P_{\delta_0}$ (and $\P_0$ when the whole process is considered). A general argument from ergodic theory, Lemma~\ref{lemZwei} below, will show that our results will hold as well w.r.t.\ any initial measure $\nu$ on $\N$.

For any $n\in\N$, denote
\begin{equation} \label{def:Vn}
V_n:=\sum_{j=0}^{n-1}\Lambda^{2L_j} \,,
\end{equation}
with the convention that $V_0:=0$, and observe that $V_n$ is precisely the conditional variance of $M_n$ given $L$.
The proofs of Theorems \ref{THMcase2} and \ref{THMcase2-convergence} will then use, as a fundamental tool, the convergence (or nonconvergence) of $(V_n)_{n\in\mathbb{N}}$ under suitable rescaling.

For example, the assertion \eqref{upper} of Theorem \ref{THMcase2} will follow from the convergence in probability of $(V_n/b_n^2)_{n\in\mathbb{N}}$ to 0, which we will show to occur for any $b_n\gg n^{1/\alpha}$, while \eqref{lower} can be derived similarly using that $V_n/n^{1/\alpha}$ is nonzero with positive probability, uniformly in $n$.
Moreover, in the particular case where the $\xi_k$ are Gaussian, the characteristic function of $M_n/n^{1/\a}$ in $\theta$ coincides with the Laplace transform of $V_n/n^{2/\a}$ in $\theta^2/2$, cf.~\eqref{conditionalcarfunc}, providing a further tool for analyzing the dichotomy asserted in Theorem \ref{THMcase2-convergence}.

Sections \ref{SecStableZ}-\ref{subs-pf-prop-key} below are devoted to the study of the asymptotic behavior of $(V_n)_{n\in\mathbb{N}}$, with special attention to the distribution of the process at its first return time to 0, which we will call $Z:=V_{\tau_0}$. In particular, in Section~\ref{SecStableZ} we will focus on the peculiar ``random stability'' property of $Z$ and in Section~\ref{SecCarfuncZ} we will characterize the order of magnitude at 0 of the characteristic function of $Z$, exploring its consequences for the behavior of $(V_n)_{n\in\mathbb{N}}$. The key proposition in this regard, Proposition \ref{asymptphi}, will be proved in Section~\ref{subs-pf-prop-key}. Theorems \ref{THMcase2} and \ref{THMcase2-convergence} will be  proved in Section~\ref{sec:proofCase2}, using all the intermediate results established earlier.

We conclude this section with the following lemma, which follows from a useful argument by Zweim\"uller \cite{Zwei}. 
\begin{lemma} \label{lemZwei}
Let $\nu$ be any initial probability measure (for $L$) on $\N$ and $(b_n)_{n\in\N}$ a diverging sequence of positive real numbers. Then, the convergence in distribution of $(M_n/b_n)_{n\in\N}$ to some random variable w.r.t.\ $\P_0$ is equivalent to the convergence in distribution to the same random variable w.r.t.\ $\P_\nu$.
Similarly, the convergence in distribution of $(V_n/b_n)_{n\in\N}$ to some random variable w.r.t.\ $P_0$ is equivalent to the convergence in distribution to the same random variable w.r.t.\ $P_\nu$. 
\end{lemma}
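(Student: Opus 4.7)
The plan is to realize the full process as a dynamical system and then invoke the classical Zweim\"uller argument \cite{Zwei} on the robustness of distributional limits under changes of absolutely continuous reference probability.

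Concretely, I would set $\Omega := \N^\N \times (\R^d)^\N$, equipped with the shift map $T$, and write $\P_\mu$ for the law of $(L_k,\xi_k)_{k\in\N}$ on $\Omega$ when $L_0\sim\mu$. Since $\mu$ is the stationary distribution of $L$ and $(\xi_k)_k$ is i.i.d.\ and independent of $L$, the shift preserves $\P_\mu$; Poincar\'e recurrence makes $(T,\P_\mu)$ automatically conservative, and ergodicity follows from the irreducibility and positive recurrence of $L$ combined with the independence of the $\xi_k$. Crucially, the partial sums $M_n = \sum_{k=0}^{n-1}\phi\circ T^k$ and $V_n = \sum_{k=0}^{n-1}\psi\circ T^k$ are Birkhoff sums over $T$, of the observables $\phi(\omega) := \Lambda^{L_0(\omega)}\xi_0(\omega)$ and $\psi(\omega) := \Lambda^{2L_0(\omega)}$, placing us squarely in the framework of \cite{Zwei}.

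Next, I would observe that both $\P_0$ and $\P_\nu$ are absolutely continuous with respect to $\P_\mu$. Since $\mu$ has full support on $\N$ by \eqref{def:stat-m}, the Radon--Nikodym densities
\[
\frac{d\P_0}{d\P_\mu} = \frac{\mathbf{1}_{\{L_0=0\}}}{\mu_0}, \qquad
\frac{d\P_\nu}{d\P_\mu} = \sum_{\ell\ge 0}\frac{\nu_\ell}{\mu_\ell}\,\mathbf{1}_{\{L_0=\ell\}}
\]
are well-defined elements of $L^1(\P_\mu)$ (depending only on the initial coordinate $L_0$). Zweim\"uller's theorem, applied to the conservative ergodic probability-preserving system $(\Omega,T,\P_\mu)$, then asserts that distributional convergence of the measurable functions $M_n/b_n$ (resp.\ $V_n/b_n$) to a prescribed random variable under any one absolutely continuous reference probability is equivalent to the same convergence under any other. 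In particular, this gives the equivalence between convergence under $\P_0$ and under $\P_\nu$, with the same limit.

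I do not expect any substantive obstacle in this proof: all that is required is to match the hypotheses of Zweim\"uller's theorem (conservative, ergodic, probability-preserving dynamics, with absolutely continuous reference probabilities) to our concrete setting, which is immediate thanks to positive recurrence and the full support of $\mu$.
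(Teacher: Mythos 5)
Your proposal is correct and follows essentially the same route as the paper: realize $(L_k,\xi_k)_k$ as an ergodic shift preserving $\P_\mu$, note that $\P_0$ and $\P_\nu$ are absolutely continuous with respect to $\P_\mu$ (since $\mu_\ell>0$ for all $\ell$), and invoke Zweim\"uller's theorem. The only hypothesis you gloss over is the asymptotic shift-invariance $(M_n\circ F-M_n)/b_n\to 0$ in measure, which the paper verifies explicitly via $|M_n\circ F-M_n|\le |X_0|+|X_n|$ and $b_n\to\infty$; your identification of $M_n$ and $V_n$ as Birkhoff sums makes this automatic (it is the Birkhoff-sum corollary of Zweim\"uller's result), so this is a presentational rather than a substantive difference.
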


\begin{proof}
Observe that the dynamical system associated with the canonical process $(L_n,\xi_n)_{n\in\mathbb N}$,
endowed with the shift $F$ and the invariant measure $\mathbb P_\mu$,
is ergodic. Moreover, as $n\to\infty$,
\begin{equation} 
\frac{\left|M_n\circ F- M_n\right|}{b_n} \le \frac{|X_0|+|X_n| }{b_n} \,,
\end{equation}
converges in distribution to 0 with respect to $\mathbb P_\mu$ (by $F$-invariance of $\mathbb P_\mu$). 
The claimed convergence then follows from~\cite[Thm.~1]{Zwei}, applied with $P=\P_\nu$, $m=\P_\mu$ and $R_n=M_n$, with values in the Banach space $\mathbb R$.
The same reasoning clearly applies to the process $(V_n/b_n)_{n\in\N}$
considering the ergodic dynamical system associated with the canonical process $(L_n)_{n\in\mathbb N}$, endowed with the shift $F$ and the invariant measure $P_\mu$. The claimed convergence then follows from~\cite[Thm.~1]{Zwei}, applied with $P=P_\nu$, $m=P_\mu$ and $R_n=V_n$.
\end{proof}

\subsection{The conditional variance \texorpdfstring{$V_n$}{Vn}}\label{SecStableZ}
For any $n\in\mathbb{N}$, let $N_n(0)$ be the local time of 
$L$ at $0$, that is
\begin{equation}
N_n(0):=\#\{j=1,\ldots, n : L_j=0\}\, ,
\end{equation}
and let $\tau_0^{(n)}$ denote the $n^\mathrm{th}$ return time of the walk  to $0$, 
defined inductively by
\begin{equation}
\tau_0^{(0)}:=0\,, \qquad  \tau_0^{(n)} := \min \{j>\tau_0^{(n-1)} : L_j=0\} \quad
(n>0) \,.
\end{equation}
Observe that $\big( \tau_0^{(n)} \big)_{n\in\mathbb{N}}$ has i.i.d.\ increments, and
\begin{equation}
\tau_0^{(N_{n}(0))}\le n< \tau_0^{(N_{n}(0)+1)}\,.
\end{equation}
As a consequence, since $n \mapsto V_n$ is increasing by definition, cf.~\eqref{def:Vn}, we have
\begin{equation}\label{Vtau}
V_{\tau_0^{(N_{n}(0))}}\le V_n<V_{\tau_0^{(N_{n}(0)+1)}} \,.
\end{equation}
Furthermore,  $\big( V_{\tau_0^{(n)}} \big)_{n\in\mathbb{N}}$ also has i.i.d.\ increments $(Z_k)_{k\in\Z^+}$, each of them distributed as
\begin{equation} \label{def:Z}
Z:=V_{\tau_0}=\sum_{j=0}^{\tau_0-1}\Lambda^{2L_j} \,,
\end{equation}
where $\tau_0$ is short for $\tau_0^{(1)}$, an abbreviation that we will use repeatedly hereafter. In other words, for all $n>0$, 
\begin{equation}\label{Vtaum}
V_{\tau_0^{(n)}}=\sum_{k=1}^n Z_k \,.
\end{equation}

In view of the previous equations, we will first focus on
the asymptotic behavior of $V_{\tau_0^{(n)}}$, as $n \to \infty$, 
and then use the $P_0$-a.s.\ convergence of $N_n(0)/n$ to $\mu_0$, 
and correspondingly the $P_0$-a.s.\ convergence of $\tau_0^{(n)}/n$ 
to $1/\mu_0$, to infer the asymptotic behavior of $V_n$.
We start with some additional notation that will allow us to write
$V_{\tau_0^{(n)}}$ in a convenient way.

Let $\cN$ be the local time at $1$ of $L$ during its first excursion out of 0 and back, that is
\begin{equation}
\cN:= \#\{j=1,\ldots,\tau_0 : L_j=1\} .
\end{equation}
Notice that, during the time interval $\{1,\ldots,,\tau_0\}$, there are exactly $\cN-1$ excursions from 1 to 1 going up, that is,
\begin{equation}
\#\{j=1,\ldots,\tau_0 : L_j=1, L_{j+1}=2\} = \cN-1 \,,
\end{equation}
see Fig.~\ref{fig2}.
Since $\pupup=1$ by assumption, one sees that $\cN\ge 1$ $P_0$-a.s.,
and that $\cN$ has geometric distribution of parameter $\pdown$.

Let us consider the sequence $\big(\tau_1^{(k)} \big)_{k\in\mathbb{N}}$ of return times to 1 of $L$, defined recursively by
\begin{equation} \label{def-tau1k}
\tau_1^{(0)}:=1, \qquad
\ \tau_1^{(k)} := \min \{j>\tau_1^{(k-1)} : L_j=1\}\, \quad (k> 0).
\end{equation}
From \eqref{def:Z}, and using the above notation together with the facts that $L_0=0$ and $L_{\tau_1^{(\cN)}}=1$, we have
\begin{equation} \label{EqZ}
\begin{split}
Z &= \sum_{j=0}^{\tau_0-1}\Lambda^{2L_j}
=1+ \sum_{k=1}^{\cN-1}\Lambda^2 \sum_{j=\tau_1^{(k-1)}}^{\tau_1^{(k)}-1}\Lambda^{2(L_j-1)} + \Lambda^2 \\
&= 1+\Lambda^2 +\Lambda^2\sum_{k=1}^{\cN-1} Z_k\, ,
\end{split}
\end{equation}
where $Z_k:=\sum_{j=\tau_1^{(k-1)}}^{\tau_1^{(k)}-1} \Lambda^{2(L_j-1)}$ ($k=1, \ldots, \cN-1$) are i.i.d.\ random variables with the same distribution as $Z$. Fig.~\ref{fig2} helps illustrate this point.

\newfig{fig2}{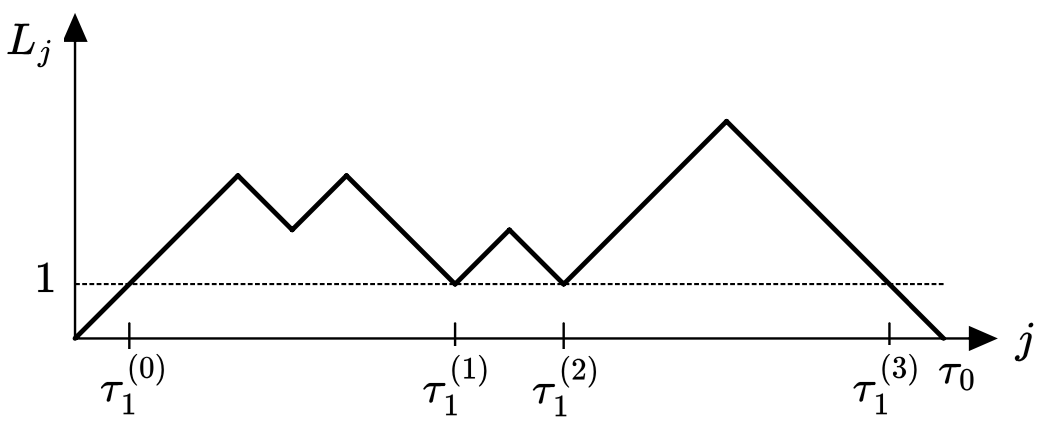}{9cm}{A realization of $L_j$, for $j=0, \ldots, \tau_0$. The figure illustrates the definitions \eqref{def-tau1k} and Eqn.~\eqref{EqZ} (with $\cN=4$).}

The key identity \eqref{EqZ} can be seen as a ``random stability'' property for the distribution of $Z$, the randomness being related to the random number $\cN-1$ of copies of $Z$ appearing in the formula. The identity \eqref{EqZ} can be transformed into an equivalent identity for $\varphi_Z$, the characteristic function of $Z$. We derive it recalling the assumption $\pdown+\pup=1$. For $\theta\in\R$,
\begin{equation} \label{EQphi}
\begin{split}
\varphi_Z(\theta) &=e^{i(1+\Lambda^2 )\theta}\, \E_0 \!\left[ \varphi_Z(\Lambda^2\theta)^{\cN-1}\right] \\
&= e^{i(1+\Lambda^2) \theta}\sum_{m=1}^\infty \varphi_Z(\Lambda^2\theta)^{m-1} \, \pup^{m-1} \pdown \\
&= e^{i(1+\Lambda^2)\theta} \, \frac{\pdown}{1-\pup\varphi_Z(\Lambda^2\theta)}\, .
\end{split}
\end{equation}

\begin{remark}
It follows from Eqn.~\eqref{EQphi} that $\varphi_Z$ is the unique fixed point of the map $\mathcal F$ given by
\begin{equation}
\mathcal F(\psi)(\theta):= e^{i(1+\Lambda^2) \theta}\frac{\pdown}{1-\pup\psi(\Lambda^2\theta)} 
\quad (\theta\in\R) \,,
\end{equation}
which defines a contraction on the set $\mathcal S$ of characteristic functions endowed with the sup norm. 
In fact, for every $\psi_1,\psi_2 \in \mathcal S$,
\begin{equation}
\left\| \mathcal F(\psi_1)-\mathcal F(\psi_2) \right\|_\infty =\pdown \pup \left\| 
\frac{\psi_1-\psi_2}{(1-\pup\psi_1)(1-\pup\psi_2)} \right\|_\infty \le \, \frac \pup \pdown \, \|\psi_1-\psi_2\|_\infty\, ,
\end{equation}
where the inequality follows from the facts that $|1-\pup\psi_1|\, |1-\pup\psi_2|\ge (1-\pup)^2=\pdown^2$ and $\pup<\pdown$ (assumed). In particular, for any characteristic function $\psi$, the above implies 
the uniform convergence of $\mathcal F^n(\psi)$ to $\varphi_Z$, with exponential rate.
\end{remark}

The proof of Theorem~\ref{THMcase2} will strongly rely on Eqn.~\eqref{EQphi} through the intermediate results stated in the following sections.

\subsection{The characteristic function of \texorpdfstring{$Z$}{Z}}\label{SecCarfuncZ}
Let us rewrite \eqref{EQphi} as
\begin{equation} \label{1-phi}
1-\varphi_Z(\theta)=\frac{\pdown(1-e^{i(1+\Lambda^2) \theta})
+\pup(1-\varphi_Z(\Lambda^2\theta))}{\pdown+\pup(1-\varphi_Z(\Lambda^2\theta))}\, .
\end{equation}
Recall the definition \eqref{def:alpha} of $\a$ and set
\begin{equation} \label{def:ell}
\ell(\theta):=\frac{1-\varphi_Z(\theta)}{|\theta|^{\a/2}} \quad (\theta \in \R \setminus \{0\}) \,.
\end{equation}
Since $Z$ is real-valued, clearly $\varphi_Z(-\theta) = \overline{\varphi_Z(\theta)}$ and $\ell(-\theta) = \overline{\ell(\theta)}$.
Since $\Lambda^\alpha=\frac{p_{\downarrow}}{p_{\uparrow}}$, the identity \eqref{1-phi} can be expressed as the following equation for $\ell(\theta)$:
\begin{equation} \label{fixed-ell0}
\ell(\theta) = \frac{\ell(\Lambda^2\theta) + \ds \frac{1 - e^{i(1+\Lambda^2) \theta}}{|\theta|^{\a/2}} } 
{|\theta|^{\a/2} \, \ell(\Lambda^2\theta) + 1} \,.
\end{equation}
The next result characterizes the behavior of the characteristic function of $Z$ around $0$.

\begin{proposition}\label{asymptphi}
There exists a bounded, non-identically null function $\R \setminus \{0\} \ni \theta\mapsto c_\theta$
such that 
\begin{equation}\label{def:ctheta}
c_\theta=\lim_{n \to \infty}\ell(\theta/\Lambda^{2n})\, .
\end{equation}
More precisely, there exists $\theta_0>0$ such that, for all $\theta \ne 0$, $-\theta_0 \le \theta \le \theta_0$, 
\begin{equation}
\lim_{n \to \infty}(1-\varphi_Z(\theta/\Lambda^{2n}))\Lambda^{n\alpha}= c_\theta |\theta|^{\alpha/2}\,,
\end{equation}
where the above convergence is exponentially fast in $n$, uniformly in $[-\theta_0,0) \cup (0,\theta_0]$.
\end{proposition}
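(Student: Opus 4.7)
The plan is to iterate the functional equation \eqref{fixed-ell0} along the sequence $\theta_n:=\theta/\Lambda^{2n}$ (for fixed $\theta\ne 0$), and view the resulting recursion as an exponentially small perturbation of the linear contraction $y\mapsto\Lambda^{-\alpha}y$, whose multiplier $\Lambda^{-\alpha}<1$ is precisely what generates the factor $\Lambda^{n\alpha}$ in the statement. Setting $\psi_n:=1-\varphi_Z(\theta_n)$, $g(s):=1-e^{i(1+\Lambda^2)s}$, and $Y_n:=\Lambda^{n\alpha}\psi_n$ (the quantity whose convergence we must prove), the identity \eqref{1-phi} applied at $\theta_n$ (so that $\Lambda^2\theta_n=\theta_{n-1}$) rewrites as
\begin{equation*}
\psi_n=\Lambda^{-\alpha}\psi_{n-1}+g(\theta_n)-\Lambda^{-\alpha}\psi_n\psi_{n-1},\qquad\text{equivalently}\qquad Y_n-Y_{n-1}=\Lambda^{n\alpha}g(\theta_n)-\Lambda^{-n\alpha}Y_n Y_{n-1}.
\end{equation*}
Since $\alpha\in(0,2)$, $|\Lambda^{n\alpha}g(\theta_n)|=O(|\theta|\Lambda^{-n(2-\alpha)})$ decays exponentially, and so does $\Lambda^{-n\alpha}$.

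The first main task, and the technical heart of the proof, is a bootstrap proving $(Y_n)$ uniformly bounded. I will pick $\theta_0>0$ small enough that $|\psi(s)|\le\eta$ for all $|s|\le\theta_0$ (possible by continuity of $\varphi_Z$ at $0$), with $\eta$ to be fixed below. For $\theta\in[-\theta_0,0)\cup(0,\theta_0]$, summing the telescoping identity for $Y_n$ under the inductive hypothesis $\max_{k<n}|Y_k|\le M$ yields
\begin{equation*}
|Y_n|\le|Y_0|+C_1|\theta|\sum_{k\ge 1}\Lambda^{-k(2-\alpha)}+C_2 M^2\sum_{k\ge 1}\Lambda^{-k\alpha}\le\eta+D_1+D_2 M^2,
\end{equation*}
and for $\eta$ small enough the quadratic $D_2 M^2-M+(\eta+D_1)=0$ admits a positive root $M^\star$, closing the bootstrap with $|Y_n|\le M^\star$ uniformly in $n$ and in $\theta\in[-\theta_0,0)\cup(0,\theta_0]$. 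With boundedness in hand, the same identity gives $|Y_n-Y_{n-1}|\le C\Lambda^{-n\gamma}$ with $\gamma:=\min(\alpha,2-\alpha)>0$, so $(Y_n)$ is Cauchy with uniform exponential rate and its limit defines $c_\theta|\theta|^{\alpha/2}$. This yields existence of $c_\theta$, its boundedness on $[-\theta_0,0)\cup(0,\theta_0]$, and the claimed uniform exponential convergence; the extension to all $\theta\ne 0$ follows from the $\Lambda^2$-scaling invariance $c_{\Lambda^2\theta}=c_\theta$, an immediate consequence of the defining limit.

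For the non-identical vanishing, I will argue by contradiction: if $c\equiv 0$, the uniform convergence $Y_n\to 0$ on $[-\theta_0,0)\cup(0,\theta_0]$ forces $\psi(t)=o(|t|^{\alpha/2})$ as $t\to 0$. But the deterministic bound $Z\ge\Lambda^{2M_*}$, with $M_*:=\max_{0\le j<\tau_0}L_j$ of geometric tail $\P(M_*\ge k)\asymp\Lambda^{-\alpha k}$ (by gambler's ruin on the excursion out of $0$), gives $\P(Z>z)\gtrsim z^{-\alpha/2}$ for large $z$. Combined with a matching upper tail estimate derived from the random-stability identity \eqref{EqZ} via the standard theory of smoothing equations, this produces $1-\mathrm{Re}\,\varphi_Z(t)\gtrsim|t|^{\alpha/2}$ along a sequence $t\to 0$, contradicting $|\psi(t)|=o(|t|^{\alpha/2})$.

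The hardest part will be the uniform bootstrap: the quadratic for $M^\star$ admits a positive real root only when $\eta$ (hence $\theta_0$) is small enough, so one must verify this threshold is achievable while still covering a nontrivial range around $0$. A secondary difficulty lies in the two-sided tail asymptotics for $Z$ needed in the non-vanishing step; only the lower bound is direct, while the upper bound requires exploiting the self-similarity \eqref{EqZ}.
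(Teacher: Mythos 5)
Your proposal is sound, and it splits naturally into a part that mirrors the paper and a part that does not. For the existence, boundedness and exponential rate, you and the paper do essentially the same thing: iterate \eqref{1-phi} along $\theta/\Lambda^{2n}$ and treat the quadratic term and the $1-e^{i(1+\Lambda^2)\theta_n}$ term as exponentially small perturbations of the linear map with multiplier $\Lambda^{-\alpha}$. The paper's bookkeeping is slightly different: it first gets a crude bound $|1-\varphi_Z(\theta/\Lambda^{2n})|=\mathcal O(r^n)$ with $r<\Lambda^{-\alpha/2}$ (so that the quadratic remainder is $\mathcal O(r^{2n})$ with $r^2<\Lambda^{-\alpha}$), and only then sums $\Lambda^{\alpha m}b_m$; you instead bootstrap boundedness of $Y_n=\Lambda^{n\alpha}(1-\varphi_Z(\theta/\Lambda^{2n}))$ directly. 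Your version works, but note a small circularity: the $k=n$ summand of your telescoped identity contains $Y_n$ itself, so you should either first solve the recursion for $Y_n$ (divide by $1+\Lambda^{-\alpha}\psi_{n-1}$, whose modulus is $\ge 1-\Lambda^{-\alpha}\eta$ by the choice of $\theta_0$ --- this is exactly the paper's move) or absorb the $\Lambda^{-n\alpha}M|Y_n|$ term into the left-hand side before closing the quadratic inequality. With that fixed, the threshold condition on $\eta+D_1$ is achievable since $D_2$ depends only on $\Lambda,\alpha$.

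Where you genuinely diverge is the non-vanishing of $c_\theta$. The paper's Lemma \ref{minolin} is purely Fourier-analytic: a repelling-fixed-point dichotomy shows that for each $\theta$ either $c_\theta\ne 0$ or $(1-\varphi_Z(\theta/\Lambda^{2n}))/(\theta/\Lambda^{2n})\to \tilde c i$, and the second alternative cannot hold for all $\theta$ because it would force the positive variables $\Lambda^{-2n}\sum_{k\le\Lambda^{2n}}Z_k$ to converge to the negative constant $-\tilde c$. Your route --- gambler's ruin giving $\P(Z>z)\gtrsim z^{-\alpha/2}$, hence a lower bound on $1-\mathrm{Re}\,\varphi_Z$ contradicting $1-\varphi_Z(t)=o(|t|^{\alpha/2})$ --- is valid and arguably more transparent, but you are over-engineering the last step: you do not need any upper tail estimate or smoothing-equation theory. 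The classical truncation inequality $\frac1T\int_0^T\big(1-\mathrm{Re}\,\varphi_Z(t)\big)\,dt\ge (1-\sin 1)\,\P(Z>1/T)$ uses only the lower tail and already produces a sequence $t_j\to 0$ with $1-\mathrm{Re}\,\varphi_Z(t_j)\ge c\,t_j^{\alpha/2}$, which is all you need. Conversely, if you insist on going through interval probabilities $\P(Z\in[a/t,b/t])$, two-sided tail bounds of the same order do not by themselves yield lower bounds on such intervals, so that variant would be delicate; stick with the truncation inequality.
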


Before proving Proposition~\ref{asymptphi}, let us state two of its consequences, which ensure that $n^{2/\a}$ is the correct scaling for $V_n$.

\begin{corollary} \label{cor:ell-bound}
The function $\ell$ given in \eqref{def:ell} is uniformly bounded.
\end{corollary}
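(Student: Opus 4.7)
The plan is essentially a packaging exercise: the nontrivial work has already been done in Proposition \ref{asymptphi}, and the corollary just extracts uniform boundedness from uniform convergence. I split $\mathbb R\setminus\{0\}$ into a far region and a punctured neighborhood of $0$, and bound $\ell$ on each.

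First, for $|\theta|\ge\theta_0$ the bound is trivial: since $\varphi_Z$ is a characteristic function, $|1-\varphi_Z(\theta)|\le 2$, and $|\theta|^{\alpha/2}\ge\theta_0^{\alpha/2}$, so $|\ell(\theta)|\le 2\theta_0^{-\alpha/2}$. In fact $\ell(\theta)\to 0$ as $|\theta|\to\infty$.

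Second, for $0<|\theta|\le\theta_0$, I tile the punctured neighborhood of the origin by contractions of a compact annulus. Set $A:=\{\eta\in\mathbb R:\theta_0/\Lambda^{2}\le|\eta|\le\theta_0\}$; then every $\theta$ with $0<|\theta|\le\theta_0$ admits a representation $\theta=\theta'/\Lambda^{2n}$ for some $n\ge 0$ and $\theta'\in A$ (choose $n$ so that $\Lambda^{2n}|\theta|\in[\theta_0/\Lambda^{2},\theta_0]$). Proposition \ref{asymptphi} asserts that $\ell(\theta'/\Lambda^{2n})\to c_{\theta'}$ uniformly for $\theta'\in[-\theta_0,\theta_0]\setminus\{0\}$ and that $\theta'\mapsto c_{\theta'}$ is bounded. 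A uniformly convergent sequence with a bounded limit is itself uniformly bounded, so
\[
\sup_{n\ge 0}\sup_{\theta'\in A}\bigl|\ell(\theta'/\Lambda^{2n})\bigr|\;\le\;\sup_{\theta'\in A}|c_{\theta'}|+\sup_{n\ge 0}\sup_{\theta'\in A}\bigl|\ell(\theta'/\Lambda^{2n})-c_{\theta'}\bigr|\;<\;\infty.
\]
Since $\bigcup_{n\ge 0}\Lambda^{-2n}A\supseteq\{\theta:0<|\theta|\le\theta_0\}$, this yields a uniform bound for $\ell$ on the punctured neighborhood of the origin; combining with the estimate for $|\theta|\ge\theta_0$ produces a uniform bound on all of $\mathbb R\setminus\{0\}$.

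The only subtlety worth flagging is the tiling observation, which is immediate from the definition of $A$. Apart from that, there is no real obstacle: the corollary is a pure consequence of the uniform convergence to a bounded limit furnished by Proposition \ref{asymptphi}, a property that is strictly stronger than the pointwise existence of $c_\theta$.
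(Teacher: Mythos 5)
Your proof is correct and follows essentially the same route as the paper: a trivial bound $|\ell(\theta)|\le 2\theta_0^{-\alpha/2}$ away from the origin, and near the origin the representation $\theta=\theta'/\Lambda^{2n}$ with $\theta'$ in the compact annulus $[\theta_0/\Lambda^2,\theta_0]$ combined with the uniform, exponentially fast convergence $\ell(\theta'/\Lambda^{2n})\to c_{\theta'}$ and the boundedness of $c_\theta$ from Proposition \ref{asymptphi}. The only point worth noting is that the uniformity in Proposition \ref{asymptphi} is stated for $(1-\varphi_Z(\theta/\Lambda^{2n}))\Lambda^{n\alpha}$, i.e.\ for $\ell(\theta/\Lambda^{2n})|\theta|^{\alpha/2}$, but since you restrict $\theta'$ to the annulus where $|\theta'|^{\alpha/2}$ is bounded below, this transfers immediately to $\ell$ itself, exactly as in the paper's estimate.
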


\begin{proof}
By the symmetry properties of $\ell$, we can assume w.l.g.\ that $\theta \in \R^+$. Let us initially restrict to 
$\theta\in (0,\theta_0]$. There exists $n\ge 0$ such that, for $s := \theta\Lambda^{2n} \in (\theta_0/\Lambda^2,\theta_0]$. By definition \eqref{def:ell} and Proposition~\ref{asymptphi}, there exist $\rho\in(0,1)$ and $C_0, C_1>0$ such that
\begin{equation} 
\begin{split}
|\ell(\theta)| &= s^{-\a/2} \Lambda^{n\alpha} \left|1-\varphi_Z(s/\Lambda^{2n}) \right| \\
&\le s^{-\a/2} \left(c_s s^{\a/2}+C_0\rho^n \right) \\
&\le (\theta_0/\Lambda^2)^{-\a/2} \left(C_1 \theta_0^{\a/2}+C_0\right)\, .
\end{split}
\end{equation}
On the other hand, for $\theta>\theta_0$, $|\ell(\theta)|\le\frac{1+|\varphi_Z(\theta)|}{\theta^{\a/2}}\le 2\theta_0^{-\a/2}$, concluding the proof.
\end{proof}

\begin{corollary}\label{biggamma}
For any sequence $(a_n)_{n\in\N}$ such that $\ds \lim_{n \to \infty}a_n=0$, 
\begin{equation} \label{goal4.5}
\frac{a^2_n} {n^{2/\a}} \sum_{k=1}^n Z_k \overset{P_0}{\underset{n\to\infty}{\loongrightarrow}}0 \,, \qquad
\frac{a^2_n} {n^{2/\a}} V_n \overset{P_0}{\underset{n\to\infty}{\loongrightarrow}}0\,.
\end{equation}
\end{corollary}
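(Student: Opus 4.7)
The plan is to leverage the uniform bound $|1-\varphi_Z(\theta)| \le C |\theta|^{\a/2}$ (for some $C>0$ and all $\theta \in \R$) implied by Corollary~\ref{cor:ell-bound}, and then derive both claims from the characteristic function of the partial sum. Since the $Z_k$ are non-negative, convergence in distribution to the constant $0$ is equivalent to convergence in $P_0$-probability, so it suffices to verify pointwise convergence of the relevant characteristic functions to $1$.

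For the first convergence I would compute directly, exploiting the independence of the $Z_k$:
\begin{equation}
E_0 \!\left[ \exp \!\left( i\theta \, \frac{a_n^2}{n^{2/\a}} \sum_{k=1}^n Z_k \right) \right] = \varphi_Z \!\left( \theta \, \frac{a_n^2}{n^{2/\a}} \right)^{\!n} .
\end{equation}
Combining the elementary inequality $|1-z^n| \le n |1-z|$, valid whenever $|z|\le 1$, with the bound from Corollary~\ref{cor:ell-bound}, one obtains
\begin{equation}
\left| 1 - \varphi_Z \!\left( \theta \, \frac{a_n^2}{n^{2/\a}} \right)^{\!n} \right| \le C \, n \cdot \left( \frac{|\theta| \, a_n^2}{n^{2/\a}} \right)^{\!\a/2} = C \, |\theta|^{\a/2} \, a_n^\a \underset{n\to\infty}{\loongrightarrow} 0 ,
\end{equation}
since $a_n \to 0$. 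The resulting pointwise convergence to $1$ gives convergence in distribution, hence in probability, of $\frac{a_n^2}{n^{2/\a}} \sum_{k=1}^n Z_k$ to $0$.

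For the second assertion I would sandwich $V_n$ via \eqref{Vtau}, \eqref{Vtaum} and the non-negativity of $Z_k$:
\begin{equation}
0 \le V_n \le V_{\tau_0^{(N_n(0)+1)}} = \sum_{k=1}^{N_n(0)+1} Z_k \le \sum_{k=1}^{n+1} Z_k ,
\end{equation}
where the last inequality uses $N_n(0)+1 \le n+1$ together with $Z_k \ge 0$. Consequently $\frac{a_n^2}{n^{2/\a}} V_n$ is dominated by $\frac{a_n^2}{n^{2/\a}} \sum_{k=1}^{n+1} Z_k$, and applying the first part of the argument with $n$ replaced by $n+1$ (which affects the normalization only by a factor tending to $1$) delivers the result.

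I do not expect a genuine obstacle here: the entire analytic content has already been absorbed into Proposition~\ref{asymptphi} and Corollary~\ref{cor:ell-bound}, and the remainder amounts to a standard characteristic function estimate plus a monotonicity bound. The only point worth double-checking is that the coarse inequality $|1-z^n|\le n|1-z|$ is adequate rather than requiring a finer Taylor expansion of $\log \varphi_Z$ near the origin---and it is, precisely because we aim at a vanishing bound rather than an exact limiting law.
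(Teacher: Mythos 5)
Your proof is correct. The first half is essentially the paper's argument (the characteristic function of the rescaled sum is controlled via the uniform bound $|1-\varphi_Z(\theta)|\le \Vert\ell\Vert_\infty|\theta|^{\a/2}$ from Corollary \ref{cor:ell-bound}), but your write-up is actually tighter: the paper only bounds $\left|\varphi_Z(\theta a_n^2/n^{2/\a})\right|^n$ \emph{from above} by $e^{\Vert\ell\Vert_\infty|\theta|^{\a/2}a_n^\a}\to 1$, which by itself is not conclusive since the modulus of a characteristic function is trivially $\le 1$; your inequality $|1-z^n|\le n|1-z|$ for $|z|\le 1$ shows the characteristic function itself converges to $1$, which is what L\'evy continuity requires. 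The second half is where you genuinely diverge: the paper controls $V_n$ by comparing $n$ with $\tau_0^{(\lfloor\rho n/E_0[\tau_0]\rfloor)}$ for $\rho>1$ and invoking the strong law of large numbers for the return times, as in \eqref{epsVn}; you instead use the crude deterministic bound $N_n(0)+1\le n+1$ together with $Z_k\ge 0$ and the sandwich \eqref{Vtau}--\eqref{Vtaum} to get $V_n\le\sum_{k=1}^{n+1}Z_k$ outright. Since the corollary only asks for convergence to $0$ under an arbitrary vanishing prefactor $a_n^2$, this cruder count costs nothing (the correction to the normalization is absorbed into a modified sequence $\tilde a_{n+1}^2:=a_n^2\,((n+1)/n)^{2/\a}\to 0$, exactly as you indicate), and it avoids the LLN machinery entirely. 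The paper's sharper counting is the template it reuses later where the constant in front of $n$ does matter (e.g.\ in the proof of \eqref{positive-var}), which is presumably why it is written that way here; for this particular statement your more elementary route is perfectly adequate.
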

\begin{proof}[Proof of Corollary~\ref{biggamma}]
It follows from Corollary \ref{cor:ell-bound} that for any $\theta\in\mathbb{R}$,
\begin{equation}
\begin{split}
	\left|E_0\!\left[e^{i\theta n^{-2/\a} a_n^2 \sum_{k=1}^n Z_k}\right]\right|
    &\le \left|\varphi_Z(\theta a_n^2/n^{2/\a})\right|^n\le \left(1+\Vert \ell\Vert_\infty \, |\theta a_n^2/n^{2/\a }|^{\a/2}\right)^n \\
	&\le e^{n\Vert \ell\Vert_\infty \, |\theta a_n^2/n^{2/\a }|^{\a/2}}=e^{\Vert\ell\Vert_\infty \, |\theta|^{\alpha/2} a_n^{\alpha }}\, ,
\end{split}  
\end{equation}
which tends to $1$ as $n \to \infty$, proving the left part of \eqref{goal4.5}.

As for the limit on the right of \eqref{goal4.5}, using the fact that $n\mapsto V_n$ is increasing and the properties described in \eqref{Vtau}-\eqref{Vtaum}, we observe that, for every $\eta>0$ and $\rho>1$,
\begin{equation} \label{epsVn}
\begin{split}
P_0(a^2_nV_n/n^{2/\a}>\eta) &\le 
P_0\!\left(\tau_0^{(\lfloor \rho n/E_0[\tau_0]\rfloor)}<n\right)+
P_0\!\left(a^2_n \, V_{\tau_0^{(\lfloor \rho n/E_0[\tau_0]\rfloor)}}/n^{2/\a}>\eta\right) \\
&\le P_0\!\left(\tau_0^{(\lfloor \rho n/E_0[\tau_0]\rfloor)}<n \right)+
P_0\!\left(a^2_n \, \lfloor \rho n/E_0[\tau_0]\rfloor^{-2/\a} \!\! \sum_{k=1}^{\lfloor\rho n/E_0[\tau_0]\rfloor} \!\! Z_k>\eta'\right) ,
\end{split}  
\end{equation}
with $\eta'= \left(E_0[\tau_0]\right)^{2/\alpha}\eta/\rho$. 
Since $\tau_0^{(m)}$ is a sum of $m$ i.i.d.\ random variables with the same distribution as $\tau_0$,  by the strong law of large numbers, $\tau_0^{(m)}/m$ converges $P_0$-a.s.\ to $E_0[\tau_0]$, as $m\to\infty$, and the first probability on the rightmost term of \eqref{epsVn} vanishes. Applying the previous convergence result, and considering the arbitrariness of $a_n\to 0$ therein, the second probability on the rightmost term of \eqref{epsVn} also vanishes, thereby concluding the proof of the second assertion of \eqref{goal4.5}.
\end{proof}

We now deduce the convergence in distribution for a lacunary subsequence of $(n^{-2/\a} \sum_{k=1}^n Z_k)_n$.

\begin{corollary} \label{CoroCase2}
Let $\widetilde{Z}$ be a random variable with characteristic function $\varphi_{\widetilde{Z}}(\theta)= e^{-c_\theta|\theta|^{\a/2}}$. Then
\begin{equation}\label{conv:Z}
\Lambda^{-2n}\sum_{k=1}^{\lfloor\Lambda^{\alpha n}\rfloor} Z_k
\overset{d}{\underset{n\to\infty}{\loongrightarrow}} \widetilde{Z} \,,
\quad \mbox{w.r.t. } P_0\,.
\end{equation}
Furthermore, there exists $\eta>0$ such that
\begin{equation}\label{positive-var}
\liminf_{n \to \infty} P_0 \!\left(V_n/n^{2/\a}>\eta\right)>0\, .
\end{equation}
\end{corollary}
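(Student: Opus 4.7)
My plan is to obtain~\eqref{conv:Z} directly from the pointwise asymptotics of $\varphi_Z$ near $0$ given by Proposition~\ref{asymptphi}, and then to deduce~\eqref{positive-var} by combining~\eqref{conv:Z} with the lower bound $V_n\ge\sum_{k=1}^{N_n(0)}Z_k$ and the $P_0$-a.s.\ convergence $N_n(0)/n\to\mu_0$.

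For~\eqref{conv:Z}: by independence of the $(Z_k)$, the characteristic function of $\Lambda^{-2n}\sum_{k=1}^{\lfloor\Lambda^{\alpha n}\rfloor}Z_k$ evaluated at $\theta\in\R$ is $\varphi_Z(\theta/\Lambda^{2n})^{\lfloor\Lambda^{\alpha n}\rfloor}$. Fix $\theta$; for $n$ large enough $\theta/\Lambda^{2n}$ lies in the window $[-\theta_0,\theta_0]$ of Proposition~\ref{asymptphi}, which gives $(1-\varphi_Z(\theta/\Lambda^{2n}))\Lambda^{n\alpha}=c_\theta|\theta|^{\alpha/2}+O(\rho^n)$ for some $\rho\in(0,1)$. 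Combined with $\lfloor\Lambda^{\alpha n}\rfloor/\Lambda^{n\alpha}\to 1$, this yields $\lfloor\Lambda^{\alpha n}\rfloor(1-\varphi_Z(\theta/\Lambda^{2n}))\to c_\theta|\theta|^{\alpha/2}$. A principal-branch logarithm expansion $\log(1+z)=z+O(|z|^2)$, valid because $|1-\varphi_Z(\theta/\Lambda^{2n})|=O(\Lambda^{-n\alpha})$ so that the quadratic remainder is $O(\Lambda^{-n\alpha})$ once multiplied by $\lfloor\Lambda^{\alpha n}\rfloor$, then gives $\varphi_Z(\theta/\Lambda^{2n})^{\lfloor\Lambda^{\alpha n}\rfloor}\to e^{-c_\theta|\theta|^{\alpha/2}}$ pointwise in $\theta$. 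Boundedness of $c_\theta$ (Corollary~\ref{cor:ell-bound}) makes the limit continuous at $0$, and L\'evy's continuity theorem concludes.

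For~\eqref{positive-var}: monotonicity of $n\mapsto V_n$ together with~\eqref{Vtaum} give $V_n\ge\sum_{k=1}^{N_n(0)}Z_k$, and the ergodic theorem for the recurrent chain $L$ yields $N_n(0)/n\to\mu_0$ $P_0$-a.s., so $A_n:=\{N_n(0)\ge\mu_0 n/2\}$ satisfies $P_0(A_n)\to 1$. For each $n$ let $k=k(n)$ be the largest integer with $\Lambda^{\alpha k}\le\mu_0 n/2$; then $k(n)\to\infty$ and $n^{2/\alpha}\le(2\Lambda^{\alpha}/\mu_0)^{2/\alpha}\Lambda^{2k}$. On $A_n$, using monotonicity of the partial sums of the nonnegative $Z_j$'s,
\begin{equation*}
\frac{V_n}{n^{2/\alpha}}\;\ge\;\left(\frac{\mu_0}{2\Lambda^\alpha}\right)^{\!2/\alpha}\Lambda^{-2k(n)}\sum_{j=1}^{\lfloor\Lambda^{\alpha k(n)}\rfloor}Z_j\,,
\end{equation*}
and by~\eqref{conv:Z} applied along $k(n)\to\infty$ the right-hand side converges in distribution to a positive multiple of $\widetilde Z$. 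Since $Z\ge 0$ forces $\widetilde Z\ge 0$ a.s., and since $c_\theta$ is not identically null (Proposition~\ref{asymptphi}), $\widetilde Z$ is not concentrated at $0$, hence $P(\widetilde Z>\eta')>0$ for some $\eta'>0$; choosing $\eta$ proportional to $\eta'$ at a continuity point of the limit law delivers $\liminf_n P_0(V_n/n^{2/\alpha}>\eta)>0$.

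The only subtlety I anticipate is justifying the $(1-a_n/n)^n$-style limit with complex $a_n$; this is routine thanks to the exponential rate supplied by Proposition~\ref{asymptphi} and the uniform boundedness of $c_\theta$ (so the quadratic term in the logarithm expansion is negligible). Everything else reduces to monotonicity of $V_n$, the law of large numbers for the hitting times $\tau_0^{(n)}$, and an elementary squeezing argument to pass from the lacunary subsequence $\Lambda^{\alpha n}$ to all $n$.
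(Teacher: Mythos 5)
Your proof is correct and follows essentially the same route as the paper: the first claim via $\varphi_Z(\theta/\Lambda^{2n})^{\lfloor\Lambda^{\alpha n}\rfloor}$, Proposition~\ref{asymptphi} (after rescaling into the window $(0,\theta_0]$, where $c_{\theta/\Lambda^{2n_0}}=c_\theta$), boundedness of $c_\theta$ and L\'evy continuity; the second via the sandwich \eqref{Vtau}--\eqref{Vtaum} plus a law of large numbers and the non-degeneracy of $\widetilde Z$. The only cosmetic difference is that you control the number of completed excursions through the local time $N_n(0)\ge\mu_0 n/2$ whereas the paper phrases the same step through $\tau_0^{(\lfloor\rho n/E_0[\tau_0]\rfloor)}<n$; these are equivalent.
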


\begin{proof}
As previously done, w.l.g.\ consider $\theta > 0$. Since the $Z_k$ are i.i.d.\ with characteristic function $\varphi_Z$, 
\begin{equation}
E_0 \!\left[e^{i\theta\Lambda^{-2n}\sum_{k=1}^{\lfloor\Lambda^{\alpha n}\rfloor} Z_k}\right]= \left( \varphi_Z(\theta/\Lambda^{2n}) \right)^{\lfloor\Lambda^{\alpha n}\rfloor}\, .
\end{equation}
In the notation introduced in Proposition \ref{asymptphi}, let us fix $n_0$ such that $s=\theta/\Lambda^{2n_0} \in (0,\theta_0]$. By the definition \eqref{def:ctheta}, $c_s=c_\theta$. Proposition \ref{asymptphi} implies that
\begin{equation}
\begin{split}
E_0 \! \left[e^{i\theta\Lambda^{-2n} \sum_{k=1}^{\lfloor\Lambda^{\alpha n}\rfloor} Z_k}\right]
&= \left( \varphi_Z(s/\Lambda^{2(n-n_0)}) \right)^{\lfloor\Lambda^{\alpha n}\rfloor} \\
&=\left(e^{-\Lambda^{-\alpha (n-n_0)}(|s|^{\alpha/2}c_{s}+a_n)}\right)^{\lfloor\Lambda^{\alpha n}\rfloor} \\
&= e^{-c_{\theta}|\theta|^{\alpha/2} -\Lambda^{\alpha n_0}a_n} \, ,
\end{split}
\end{equation}
for some $a_n \in \C$ such that $\ds \lim_{n \to \infty}a_n=0$. Taking this limit we obtain
\begin{equation}
\lim_{n\to\infty} E_0 \!\left[e^{i\theta\Lambda^{-2n}\sum_{k=1}^{\lfloor\Lambda^{\alpha n}\rfloor} Z_k}\right]=e^{-c_\theta \theta^{\a/2}}\, .
\end{equation}
Since $c_\theta$ is bounded, the function $e^{-c_\theta \theta^{\a/2}}$ is continuous at 0 and \eqref{conv:Z} follows.

To prove the second statement of the corollary, we observe that \eqref{conv:Z} corresponds to the convergence in distribution of the sequence $\big( \Lambda^{-2n} V_{\tau_0^{(\Lambda^{\alpha n})}} \big)_{n\in\N}$, cf.~\eqref{Vtaum}, to a non-constant random variable. We proceed as in \eqref{epsVn}: since $n\mapsto V_n$ is increasing and by \eqref{Vtau}-\eqref{Vtaum} we have that for all $\eta>0$ and $\rho<1$,
\begin{equation} \label{lastdisp1}
\begin{split}
    P_0(V_n/n^{2/\a}>\eta) &\ge P_0\!\left(n^{-2/\a}V_{\tau_0^{(\lfloor\rho n/E_0[\tau_0]\rfloor)}}>\eta\right) - P_0\!\left(\tau_0^{(\lfloor \rho n/E_0[\tau_0]\rfloor)}>n \right) \\
	&\ge P_0\! \left(\left(\frac{\Lambda^{\alpha(m_n+1)}E_0[\tau_0]}\rho\right)^{-2/\a} \!\! V_{\tau_0^{(\lfloor\Lambda^{\alpha m_n}\rfloor)}}>\eta\right) - P_0\!\left(\tau_0^{(\lfloor \rho n/E_0[\tau_0]\rfloor)}>n
	\right)\, ,
\end{split}
\end{equation}
where $m_n:=\left\lfloor \frac{\log (\rho n / E_0[\tau_0]) }{\alpha\log \Lambda } \right\rfloor$, 
so that $\Lambda^{\alpha m_n}\le \rho n / E_0[\tau_0] < \Lambda^{\alpha (m_n+1)}$.
By the strong law of large numbers on $\big( \tau_0^{(m)}/m \big)_m$, we conclude that the second term in the last line of \eqref{lastdisp1} vanishes $P_0$-almost surely. Thus, for all $\e>0$ and all correspondingly large $n$, we get
\begin{equation}
P_0(V_n/n^{2/\a}>\eta) \ge P_0 \!\left(\Lambda^{-2m_n}V_{\tau_0^{(\Lambda^{\alpha m_n})}}>\Lambda^2 \eta\left(E_0[\tau_0]/\rho\right)^{2/\a}\right)-\e\, ,
\end{equation}
and from \eqref{conv:Z} it follows that
\begin{equation}
\liminf_{n \to \infty} P_0 \!\left(V_n/n^{2/\a}>\eta\right)
\ge P_0 \! \left(\widetilde{Z} >\Lambda^2 \eta\left(E_0[\tau_0]/\rho\right)^{2/\a}\right)\,.
\end{equation}
Since $\widetilde{Z}$ is non-degenerate, there exists $\eta>0$ such that the above r.h.s.\ is positive, yielding \eqref{positive-var}.
\end{proof}

\begin{remark}\label{rem-measure}
We emphasize that the statements of Corollary \ref{CoroCase2} are still valid if the law $P_0$ of $L$ is replaced with $P_\nu$ (once again, $\nu$ denotes a generic distribution of $L_0$). More precisely, the fact that \eqref{conv:Z} implies the convergence in distribution w.r.t.\ $P_\nu$ of the sequence $\big( \Lambda^{-2n} V_{\tau_0^{(\lfloor \Lambda^{\alpha n} \rfloor)}} \big)_{n\in\N}$ to $\widetilde Z$ follows from an application of Lemma \ref{lemZwei}, while \eqref{positive-var} can be deduced by replacing $P_0$ with $P_\nu$ in its proof, line by line.
\end{remark}

In this model, the fact that $\R^+\ni\theta\mapsto c_\theta$ is constant or not 
plays a major role in view of Theorem~\ref{THMcase2-convergence}.
The next corollary states that if $c_\theta$ is not a constant for all $\theta>0$, then the distribution of $Z$ does not belong to the normal domain of attraction 
of a stable distribution.
\begin{corollary} \label{CoroCase3}
If $\R^+\ni\theta\mapsto c_\theta$ is not constant, then $n^{-2/\alpha} \sum_{k=1}^n Z_k$ does not converge in distribution, as $n \to \infty$. 
\end{corollary}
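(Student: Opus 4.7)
\emph{Proof proposal.} The strategy is proof by contradiction: assume that $n^{-2/\alpha}\sum_{k=1}^n Z_k$ converges in distribution to some $Y$ with characteristic function $\psi$, and exhibit a one-parameter family of subsequences along which $\varphi_Z(\theta/n^{2/\alpha})^n$ converges to limits that genuinely depend on the parameter. The family of choice is $n_m^{(\lambda)} := \lfloor \lambda\Lambda^{\alpha m}\rfloor$, indexed by $\lambda>0$, which generalizes the dyadic-type subsequence $\lfloor\Lambda^{\alpha m}\rfloor$ already analyzed in Corollary~\ref{CoroCase2}.

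Fix $\theta>0$ and, for $\lambda>0$, set $s := \theta\lambda^{-2/\alpha}$. Since $\theta/(n_m^{(\lambda)})^{2/\alpha} = (s/\Lambda^{2m})(1+o(1))$ as $m\to\infty$, and taking $\theta$ small enough that $s\in(0,\theta_0]$, Proposition~\ref{asymptphi} gives
\begin{equation*}
1-\varphi_Z\!\left(\theta/(n_m^{(\lambda)})^{2/\alpha}\right) = c_s\,|s|^{\alpha/2}\,\Lambda^{-\alpha m}\bigl(1+o(1)\bigr).
\end{equation*}
Raising to the power $n_m^{(\lambda)}\sim\lambda\Lambda^{\alpha m}$ yields
\begin{equation*}
\lim_{m\to\infty}\varphi_Z\!\left(\theta/(n_m^{(\lambda)})^{2/\alpha}\right)^{n_m^{(\lambda)}} = \exp\!\bigl(-\lambda c_s|s|^{\alpha/2}\bigr) = \exp\!\bigl(-c_{\theta\lambda^{-2/\alpha}}\,|\theta|^{\alpha/2}\bigr).
\end{equation*}
Under the assumed convergence, this limit must equal $\psi(\theta)$ for every $\lambda>0$. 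Taking moduli forces $\mathrm{Re}(c_{\theta\lambda^{-2/\alpha}})$ to be independent of $\lambda$, and since $\lambda\mapsto\theta\lambda^{-2/\alpha}$ covers all of $\R^+$, the real part of $c_{\,\cdot}$ is constant on $\R^+$. For the imaginary part, the boundedness of $c_{\,\cdot}$ from Corollary~\ref{cor:ell-bound} lets us choose $|\theta|$ so small that $|c_{\,\cdot}|\,|\theta|^{\alpha/2}<\pi$, whence comparing phases of the two limits (avoiding any $2\pi i$ ambiguity in the logarithm) forces $\mathrm{Im}(c_{\,\cdot})$ to also be constant. Hence $\theta\mapsto c_\theta$ is constant on $\R^+$, contradicting the hypothesis.

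The main technical hurdle is transferring Proposition~\ref{asymptphi}'s asymptotic, stated along the geometric scale $\theta/\Lambda^{2n}$, to the perturbed arguments $\theta/(n_m^{(\lambda)})^{2/\alpha}$ coming from the floor-function subsequences. This is easily handled: the proposition provides an exponential rate of convergence that is uniform on compact subsets of $\R\setminus\{0\}$, which absorbs the $O(\Lambda^{-\alpha m})$ perturbation in the argument; the corresponding $(1+o(1))$ factor inside the base, raised to the power $n_m^{(\lambda)}\sim\lambda\Lambda^{\alpha m}$, then contributes only a multiplicative $1+o(1)$ to the final exponential and does not affect the limit.
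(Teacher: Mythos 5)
Your route is genuinely different from the paper's. The paper's proof is three lines: if $n^{-2/\alpha}\sum_{k=1}^n Z_k$ converged, the limit of normalized sums of i.i.d.\ variables would have to be $(\alpha/2)$-stable, yet it would also have to coincide with the variable $\widetilde Z$ of Corollary~\ref{CoroCase2}, whose characteristic function $e^{-c_\theta|\theta|^{\alpha/2}}$ is not of stable form unless $c_\theta$ is constant (citing \cite[Thm.~2.2.2]{IL}). You instead re-derive the needed rigidity by hand, comparing the limits along the one-parameter family of lacunary subsequences $\lfloor\lambda\Lambda^{\alpha m}\rfloor$. This is a legitimate, more self-contained alternative (essentially a convergence-of-types argument), and your treatment of the real and imaginary parts of $c$ at the end — using the boundedness from Corollary~\ref{cor:ell-bound} to kill the $2\pi i$ ambiguity, and the scaling relation $c_\theta=c_{\theta/\Lambda^2}$ to extend from $(0,\theta_0]$ to all of $\R^+$ — is correct.

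There is, however, a gap at the key technical step. After writing $\theta/(n_m^{(\lambda)})^{2/\alpha}=u_m/\Lambda^{2m}$ with $u_m\to s$, the uniform exponential convergence in Proposition~\ref{asymptphi} gives you
\begin{equation*}
1-\varphi_Z\bigl(u_m/\Lambda^{2m}\bigr)=\Lambda^{-\alpha m}\bigl(c_{u_m}|u_m|^{\alpha/2}+\mathcal O(\rho^m)\bigr),
\end{equation*}
and the uniformity indeed absorbs the error term — but it does \emph{not} let you replace $c_{u_m}$ by $c_s$. That replacement requires continuity of $\theta\mapsto c_\theta$, which is nowhere established in the paper (it is plausibly extractable from the proof of Proposition~\ref{asymptphi}, where $c_\theta$ appears as a uniformly convergent series of continuous functions, but you neither prove nor even flag it). The cleanest repair avoids the issue entirely: since the assumed convergence in distribution forces $\varphi_Z(\theta_n/n^{2/\alpha})^n\to\psi(\theta)$ for \emph{any} $\theta_n\to\theta$ (uniform convergence of characteristic functions on compacts, via L\'evy continuity), you may choose $\theta_{n_m}:=s\,(n_m^{(\lambda)})^{2/\alpha}/\Lambda^{2m}\to\theta$ so that the argument of $\varphi_Z$ is exactly $s/\Lambda^{2m}$, and then apply Proposition~\ref{asymptphi} at the fixed value $s$ with no perturbation. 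With that modification your argument goes through; as written, the step "the uniform rate absorbs the perturbation" is asserting more than the proposition delivers.
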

\begin{proof}
Assume that $n^{-2/\alpha} \sum_{k=1}^n Z_k$ converges. Since the $Z_k$ are i.i.d., the limit must be an $(\alpha/2)$-stable random variable. On the other hand, it must also be the same variable $\widetilde{Z}$ introduced in Corollary \ref{CoroCase2}, which cannot be stable unless $c_\theta$ is constant in $\theta \in \R^+$ \cite[Thm.~2.2.2]{IL}.
\end{proof}

\subsection{Proof of Proposition \ref{asymptphi}} \label{subs-pf-prop-key}
Let $\varepsilon \in(0,1)$ be such that $r:=(1-\varepsilon)^{-1}\Lambda^{-\alpha }<\Lambda^{-\a/2}<1$, and $\theta_0>0$ such that $\sup_{(0,\theta_0]} |1-\varphi_Z|\le\varepsilon \Lambda^\alpha$. Once again, it suffices to restrict our attention to $\theta>0$. Consider first $\theta \in (0,\theta_0]$. Denoting $a_n := 1-\varphi_Z(\theta/\Lambda^{2n})$, we can rewrite \eqref{1-phi} as
\begin{equation}
a_{n+1}=\frac{1-e^{\frac{i(1+\Lambda^2) \theta}{\Lambda^{2n+1}}} + \Lambda^{-\alpha}a_n}
{1+\Lambda^{-\alpha }a_n}\, ,
\end{equation}
having noted that $\pup/\pdown=\Lambda^{-\alpha} \in( 1/\Lambda^2 ,1)$. It follows that
\begin{equation}
|a_{n+1}|\le (1-\varepsilon)^{-1}\left(\frac{(1+\Lambda^2) \theta}
{\Lambda^{2(n+1)}}+\Lambda^{-\alpha }|a_n|\right)
\end{equation}
and, iterating,
\begin{equation}
\begin{split}
|a_n| &\le (1-\varepsilon)^{-n}\Lambda^{-n\alpha }|a_0|+\sum_{m=0}^{n-1}
(1-\varepsilon)^{-1-m}\Lambda^{-\alpha m}
\frac{(1+\Lambda^2) \theta}{\Lambda^{2(n-m)}} \\
&\le r^n|a_0|+(1+\Lambda^2) \Lambda^{-2n} \theta\, (1-\varepsilon)^{-1}
\frac{(1-\varepsilon)^{-n} \Lambda^{2n(1-\a/2)}-1}{(1-\varepsilon)^{-1}
\Lambda^{2(1-\a/2)}-1)} \\
&\le r^n |a_0| + \frac{(1+\Lambda^2) \, \theta\, (1-\varepsilon)^{-1}}
{(1-\varepsilon)^{-1}\Lambda^{2(1-\a/2)}-1}r^n\, ,
\end{split}
\end{equation}
where we have used that $(1-\varepsilon)^{-1}\Lambda^{2(1-\a/2)}>1$.
Therefore, $|a_n|=\mathcal O(r^n(|a_0|+\theta))$, for $n\to\infty$,
Setting $\rho:=\max(1/\Lambda^2 ,r^2)$, we further write
\begin{equation}
a_{n+1}= \frac{1-e^{\frac{i(1+\Lambda^2) \theta}{\Lambda^{2(n+1)}}}+\Lambda^{-\alpha }a_n}{1+\Lambda^{-\alpha} a_n}=\left(\Lambda^{-\alpha }a_n + \mathcal O(\theta\Lambda^{-2n})\right)(1 + \mathcal O(\Lambda^{-\alpha} a_n))=\Lambda^{- \alpha}a_n+b_n\, ,
\end{equation}
with $b_n:=a_{n+1}-\Lambda^{- \alpha }a_n = \mathcal O(\rho^{n})$, uniformly in $n$ and $\theta \in (0,\theta_0]$.
In this notation, we observe that
\begin{equation}
a_{n}= \Lambda^{-\alpha n}a_0 +\sum_{m=0}^{n-1} \Lambda^{-\alpha m}b_{n-m} 
=\Lambda^{-\alpha n}\left(a_0 +\sum_{m=1}^{n} \Lambda^{\alpha m} b_m 
\right) \, .
\end{equation}
Since $\rho=\max(1/\Lambda^2 ,r^2)<\Lambda^{-2\alpha/2}$, the above sum is convergent for $n\to\infty$. We conclude that 
\begin{equation}
a_n= c_\theta \, \theta^{\a/2} \Lambda^{-2\alpha n /2} + \mathcal O(\rho^n) \,,
\end{equation}
uniformly in $\theta\in (0,\theta_0]$ and with $c_\theta := \theta^{-\a/2}\left(a_0 +\sum_{m=1}^\infty \Lambda^{2\frac {\alpha m}2} b_m\right)$.
Furthermore, since $c_\theta=c_{\theta/\Lambda^2}$ by the definition \eqref{def:ctheta}, we have 
\begin{equation}
\sup_{\theta\in (0,\theta_0]} |c_\theta| = \sup_{\theta\in (\theta_0/\Lambda^2,\theta_0]} |c_\theta|
<\infty \, .
\end{equation}

All in all, we have proved that there exist $\theta_0, C_0>0$ such that, for all $\theta\in[-\theta_0,0) \cup (0,\theta_0]$, 
\begin{equation} \label{prop1}
\lim_{n \to \infty}(1-\varphi_Z(\theta/\Lambda^{2n}))\Lambda^{n\alpha}= c_\theta |\theta|^{\alpha/2} \,,
\end{equation}
with $|c_\theta|<C_0$. Moreover, the convergence is exponentially fast in $n$, uniformly in $[-\theta_0,0) \cup (0,\theta_0]$. To conclude the proof, it remains to prove that $\theta\mapsto c_\theta$ is not identically null. This follows from the next lemma.

\begin{lemma} \label{minolin}
Let $\theta\in (\theta_0/\Lambda^2 ,\theta_0]$. Then, either 
\begin{equation} \label{H1}
	c_\theta=\lim_{n\to\infty}\ell(\theta/\Lambda^{2n})
    =\lim_{n\to\infty}\frac{1-\varphi_Z(\theta/\Lambda^{2n})}{|\theta/\Lambda^{2n}|^{\a/2}}
	\ne 0  \, ,
\end{equation}
or
\begin{equation} \label{H2}
	\lim_{n \to \infty} \frac{1-\varphi_Z(\theta/\Lambda^{2n})}{\theta/\Lambda^{2n}}
	=\tilde{c} i\,,\quad \mbox{with } \tilde{c} := \frac{1+\Lambda^2} {\ds \frac{\Lambda^2 \pup} \pdown-1}>0 \,.
\end{equation}
\end{lemma}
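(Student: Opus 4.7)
The plan is to identify the next term in the asymptotic expansion of $a_n := 1 - \varphi_Z(\theta/\Lambda^{2n})$, beyond the leading $\Lambda^{-\alpha n}$ behavior encoded by $c_\theta$. Rewriting~\eqref{1-phi} and Taylor-expanding both numerator and denominator around $0$, I would put the recursion in the form
\[
a_{n+1} = \Lambda^{-\alpha}a_n - i(1+\Lambda^2)\theta\,\Lambda^{-2(n+1)} + R_n,
\]
with $R_n = O(\theta^2\Lambda^{-4n}) + O(a_n\,\theta\,\Lambda^{-2n}) + O(a_n^2)$. A direct algebraic check shows that the constant $\tilde c = (1+\Lambda^2)/(\Lambda^2\pup/\pdown - 1)$ appearing in~\eqref{H2} is precisely tuned so that $p_n := \tilde c\, i\theta\,\Lambda^{-2n}$ is a particular solution of the affine recurrence $p_{n+1} = \Lambda^{-\alpha}p_n - i(1+\Lambda^2)\theta\,\Lambda^{-2(n+1)}$; positivity and finiteness of $\tilde c$ come from $\Lambda^{2-\alpha} > 1$, i.e. from $\alpha < 2$.

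Iterating the recurrence delivers the closed form
\[
a_n = (a_0 - \tilde c\, i\theta)\,\Lambda^{-\alpha n} + \tilde c\, i\theta\,\Lambda^{-2n} + \Lambda^{\alpha-\alpha n}\sum_{k=0}^{n-1}\Lambda^{\alpha k}R_k.
\]
Using the a priori bound $|a_n| = O(\Lambda^{-\alpha n})$ from Corollary~\ref{cor:ell-bound}, one has $|\Lambda^{\alpha k}R_k| = O(\Lambda^{-\alpha k})$, so the series $\sum_{k\geq 0}\Lambda^{\alpha k}R_k$ converges absolutely. Multiplying by $\Lambda^{\alpha n}$ and sending $n\to\infty$ (the $\Lambda^{-2n}$ term disappears thanks to $\alpha < 2$), the definition of $c_\theta$ yields the key identity
\[
c_\theta\,\theta^{\alpha/2} = (a_0 - \tilde c\, i\theta) + \Lambda^\alpha\sum_{k=0}^\infty \Lambda^{\alpha k}R_k.
\]

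Assume now $c_\theta = 0$. The identity reads $a_0 - \tilde c\, i\theta = -\Lambda^\alpha\sum_{k\geq 0}\Lambda^{\alpha k}R_k$, and substitution into the closed form isolates the partial tail
\[
a_n - \tilde c\, i\theta\,\Lambda^{-2n} = -\Lambda^{\alpha-\alpha n}\sum_{k\geq n}\Lambda^{\alpha k}R_k.
\]
The lemma thus reduces to showing this remainder is $o(\Lambda^{-2n})$. With only the a priori bound $|a_n| = O(\Lambda^{-\alpha n})$, one gets $|R_k| = O(\Lambda^{-2\alpha k})$ and the right-hand side is $O(\Lambda^{-2\alpha n})$, which suffices when $\alpha > 1$ but not for small $\alpha$.

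For $\alpha \leq 1$ I would bootstrap: if $|a_n| = O(\Lambda^{-\beta n})$ for some $\beta \in [\alpha,2]$, then $|R_k| = O(\Lambda^{-\min(2\beta,\beta+2,4)k})$, and the displayed formula upgrades to $|a_n - \tilde c\, i\theta\,\Lambda^{-2n}| = O(\Lambda^{-\min(2\beta,\beta+2,4)n})$, hence $|a_n| = O(\Lambda^{-\min(2,2\beta)n})$. Iterating the map $\beta \mapsto \min(2,2\beta)$ from $\beta=\alpha$ reaches the cap $\beta = 2$ in at most $\lceil\log_2(2/\alpha)\rceil$ steps, and a final pass then gives $|a_n - \tilde c\, i\theta\,\Lambda^{-2n}| = O(\Lambda^{-4n}) = o(\Lambda^{-2n})$. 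Dividing by $\theta\,\Lambda^{-2n}$ yields~\eqref{H2}. The main subtlety is this bootstrap: one has to propagate the quantitative $R_k$-estimate cleanly through the doubling scheme, and ensure the scheme cannot stall before reaching $\beta = 2$, which is guaranteed exactly by the strict inequality $\alpha < 2$ that also makes $\tilde c$ positive.
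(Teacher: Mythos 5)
Your argument is correct, and it takes a genuinely different route from the paper's. The paper works with $\tilde{\ell}_n(\theta)=(1-\varphi_Z(\theta/\Lambda^{2n}))/(\theta/\Lambda^{2n})$ and exploits that the recursion for $\tilde{\ell}_n-\tilde{c}i$ is \emph{expansive} with factor $\Lambda^2\pup/\pdown=\Lambda^{2-\a}>1$: either $\tilde{\ell}_n\to\tilde{c}i$ (which is \eqref{H2}) or $|\tilde{\ell}_n|\to\infty$, and in the latter case a second expansive iteration at exponent $\gamma=\a/2+\eps$ shows that $c_\theta=0$ would be contradictory. You instead solve the affine recursion $a_{n+1}=\Lambda^{-\a}a_n-i(1+\Lambda^2)\theta\Lambda^{-2(n+1)}+R_n$ exactly by variation of constants: your identification of $\tilde{c}\,i\theta\Lambda^{-2n}$ as the particular solution is right (the computation $C(\Lambda^{2-\a}-1)=1+\Lambda^2$ gives exactly the paper's $\tilde c$, positive because $\a<2$), the observation that $c_\theta\theta^{\a/2}$ is precisely the coefficient of the homogeneous mode $\Lambda^{-\a n}$ is correct, and when that coefficient vanishes the remainder is the tail $\Lambda^{\a-\a n}\sum_{k\ge n}\Lambda^{\a k}R_k$, which your bootstrap upgrades to $O(\Lambda^{-4n})=o(\Lambda^{-2n})$. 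The bootstrap is sound: the exponent map $\beta\mapsto\min(2,2\beta)$ reaches $2$ in finitely many steps starting from $\beta=\a>0$, so constants degrade only finitely often, and each tail sum converges because $\min(4,\beta+2,2\beta)>\a$. Your approach is more quantitative (it yields the rate $a_n=\tilde{c}\,i\theta\Lambda^{-2n}+O(\Lambda^{-4n})$ in the \eqref{H2} case); the paper's is softer but proves, inside the same lemma, the additional fact that \eqref{H2} must fail for at least one $\theta$ (because $\tilde{\ell}_n\to\tilde{c}i$ for all $\theta$ would force the positive variables $\Lambda^{-2n}\sum_{k\le\Lambda^{2n}}Z_k$ to converge to the negative constant $-\tilde c$).

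Two minor remarks. First, the a priori bound $|a_n|=O(\Lambda^{-\a n})$ should be quoted from the part of the proof of Proposition \ref{asymptphi} that precedes the lemma (which already gives $a_n=c_\theta\theta^{\a/2}\Lambda^{-\a n}+O(\rho^n)$ with $|c_\theta|$ bounded), not from Corollary \ref{cor:ell-bound}, which is deduced \emph{after} the proposition; the content is the same, so this is only a matter of citation order. Second, be aware that the dichotomy alone does not yield the non-vanishing of $\theta\mapsto c_\theta$ that the surrounding proof of Proposition \ref{asymptphi} requires; the paper extracts that from the extra ingredient just mentioned, which your proof would need to supply separately if it were substituted for the paper's.
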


\begin{proof}
For the sake of the notation, let us set 
\begin{equation} \label{def-tildeelln}
\tilde{\ell}_n(\theta):= \frac{1-\varphi_Z(\theta/\Lambda^{2n})}{\theta/\Lambda^{2n}} \,.
\end{equation}
Our first goal will be to prove that there exists $\theta\in (\theta_0/\Lambda^2, \theta_0]$ such that
\begin{equation} \label{degenerate2}
\lim_{n \to \infty} |\tilde{\ell}_n(\theta)|=\infty\, .
\end{equation}

Observing that $\tilde{c}i=-(1+\Lambda^2) i+\frac{\Lambda^2 \pup}{\pdown}\tilde{c}i$, we can rewrite \eqref{1-phi} as 
\begin{equation}
1-\varphi_Z(\theta)-\tilde{c}i\theta=\frac{\pdown(1+(1+\Lambda^2) i\theta-e^{i(1+\Lambda^2) \theta})+\pup[(1-\varphi_Z(\Lambda^2 \theta))(1-\tilde{c}i\theta)-\Lambda^2 \tilde{c}i\theta]}{\pdown+\pup\left(1-\varphi_Z(\Lambda^2 \theta)\right)} \,.
\end{equation}
The above is an identity for $\theta \in \R$. In the limit $\theta \to 0^+$ it implies that
\begin{equation} \label{diffc0}
\frac{1-\varphi_Z(\theta)}\theta-\tilde{c}i 
= \frac{\mathcal O(\theta)+\frac{\Lambda^2 \pup}{\pdown}(\frac{1-\varphi_Z(\Lambda^2 \theta)}{\Lambda^2 \theta}-\tilde{c}i)}{1+ \mathcal O(\theta)} 
= \mathcal O(\theta) + \frac{\Lambda^2\pup}{\pdown}\left(\frac{1-\varphi_Z(\Lambda^2 \theta)}{\Lambda^2 \theta}-\tilde{c}i\right) .
\end{equation}

Let us now restrict to $\theta \in (\theta_0/\Lambda^2 ,\theta_0]$.
Assume first that $\theta$ is such that $\tilde{\ell}_n(\theta)$ does not converge to $\tilde{c}i$, as $n\to\infty$. Then there exists $\delta>0$ such that $\big| \tilde{\ell}_n(\theta) - i\tilde{c} \big|>\delta$ infinitely often in $n$.
In particular, for any $\eta\in \Big(0,\frac{\Lambda^2 \pup} \pdown -1 \Big)$, we can choose $m_0$ such that the error term in the rightmost side of \eqref{diffc0} satisfies $\mathcal O(\theta/\Lambda^{2m})<\eta\delta$ for all $m\ge m_0$, and $\big| \tilde{\ell}_{m_0}(\theta)-\tilde{c} i \big|>\delta$. It follows that
\begin{equation}
	\left|\tilde{\ell}_{m_0+1}(\theta)-\tilde{c}i\right|
	>\left(\frac{\Lambda^2 \pup}{\pdown}-\eta\right)
	\left|\tilde{\ell}_{m_0}(\theta)-\tilde{c}i\right|>\delta \, .
\end{equation}
Proceeding by induction, for $m\ge m_0$ we obtain
\begin{equation}
\left|\tilde{\ell}_{m}(\theta)-\tilde{c}i\right|
>\left(\frac{\Lambda^2 \pup}{\pdown}-\eta\right)^{m-m_0}
\left|\tilde{\ell}_{m_0}(\theta)-\tilde{c}i\right| >
\delta \left(\frac{\Lambda^2 \pup}{\pdown}-\eta\right)^{m-m_0} \, ,
\end{equation}
implying that $\ds \lim_{m\to\infty} \big| \tilde{\ell}_{m}(\theta)-\tilde{c}i \big|=\infty$.

We have thus shown that either $\tilde{\ell}_{m}(\theta)$ converges to $\tilde{c}i$ or its modulus diverges, as $m \to \infty$. Moreover, if for all $\theta\in (\theta_0/\Lambda^2 ,\theta_0]$ \eqref{degenerate2} is false,
then for all $\theta\in (\theta_0/\Lambda^2, \theta_0]$ $\tilde{\ell}_{n}(\theta) \to \tilde{c}i$. If this were the case, the same limit would then occur for all $\theta\in\R^+$. In view of definition \eqref{def-tildeelln}, this would imply that for all $\theta\in\R^+$,
\begin{equation}
    E_0 \!\left[e^{i\theta\Lambda^{-2n}\sum_{k=1}^{\lfloor \Lambda^{2n} \rfloor} Z_k}\right]
	=\varphi_Z(\theta/\Lambda^{2n})^{\lfloor \Lambda^{2n} \rfloor} = \left(1- (\theta/\Lambda^{2n})(i\tilde{c}
    +o(1) \right)^{\Lambda^{2n}} \underset{n\to\infty}{\loongrightarrow} e^{-i\tilde{c}\theta}\,,
\end{equation}
showing that the sequence of positive random variables $\Lambda^{-2n} \sum_{k=1}^{\lfloor \Lambda^{2n} \rfloor} Z_k$ converges in distribution to the negative constant $-\tilde{c}$, which is impossible. 

In conclusion, we have proved that there exists $\theta\in (\theta_0/\Lambda^2,\theta_0]$ such that \eqref{degenerate2} holds, and that for every $\theta\in (\theta_0/\Lambda^2 ,\theta_0]$,
\begin{equation}
\mbox{either} \quad \lim_{n \to \infty}\tilde{\ell}_{n}(\theta)=\tilde{c} i
\quad \mbox{or} \quad
\lim_{n \to \infty}|\tilde{\ell}_{n}(\theta)|=\infty \, .
\end{equation}
From now on, we assume that $\theta \in (\theta_0/\Lambda^2\theta_0]$ is such that 
\begin{equation} \label{toto}
\lim_{n \to \infty}|\tilde{\ell}_{n}(\theta)|=\left| \frac{1-\varphi_Z(\theta/\Lambda^{2n})}{\theta/\Lambda^{2n}}\right| =\infty\, ,
\end{equation}
so that condition \eqref{H2} is not verified, and we aim prove that $c_\theta\ne 0$, as stated by the alternative condition \eqref{H1}.

Assume by contradiction that $c_\theta=0$. It follows from \eqref{prop1} that there exists $\varepsilon>0$ such that, for all $\theta\in (\theta_0/\Lambda^2\theta_0]$, as $n\to\infty$,
\begin{equation} \label{degenerate}
\quad 1-\varphi_Z(\theta/\Lambda^{2n})=\mathcal O \!\left( (\theta/\Lambda^{2n})^{\a/2+\varepsilon} \right).
\end{equation}
Set $\gamma:=\a/2+\varepsilon$. 
Applying \eqref{1-phi} to $\theta/\Lambda^{2n+1}$ gives
\begin{equation} \label{Bn-iter}
\frac{|1-\varphi_Z(\theta/\Lambda^{2(n+1)})|}{|\theta/\Lambda^{2(n+1)}|^\gamma}
= \frac{|1-\varphi_Z(\theta/\Lambda^{2n})|}{|\theta/\Lambda^{2n}|^\gamma}
\Lambda^{2\gamma}\frac{\pup}{\pdown} B_n(\theta)\,,
\end{equation}
where 
\begin{equation}
B_n(\theta) := \frac{\left|1+ \ds \frac{\pdown}{\pup}
\frac{1-e^{i(1+\Lambda^2)\theta/\Lambda^{2(n+1)}}}{1-\varphi_Z(\theta/\Lambda^{2n})}
\right|} 
{\left|1+ \ds \frac{\pup}{\pdown}\left( 1-\varphi_Z(\theta/\Lambda^{2n})\right) \right|} \,.
\end{equation}
Notice that the above numerator tends to 1, by the assumption \eqref{toto}. The denominator also tends to 1, by a property of the characteristic function. We can thus write $B_n(\theta) =: 1-\epsilon_n$, where $\epsilon_n \to 0$, transforming \eqref{Bn-iter} into
\begin{equation}\label{iter}
\frac{|1-\varphi_Z(\theta/\Lambda^{2(n+1)})|}{|\theta/\Lambda^{2(n+1)}|^\gamma}
= \frac{|1-\varphi_Z(\theta/\Lambda^{2n})|}{|\theta/\Lambda^{2n}|^\gamma}
\Lambda^{2\gamma}\frac{\pup}{\pdown}(1-\epsilon_n)\, .
\end{equation}
Now, let $\delta\in \big(1,\Lambda^{2\gamma}\frac \pup \pdown \big)$ (this is a non-empty set by definition of $\gamma$) and $n_0\in\mathbb{N}$ be such that $\Lambda^{2\gamma}\frac \pup \pdown (1-\epsilon_n) \ge \delta$,
for all $n\ge n_0$. For such values of $n$, iterating \eqref{iter} gives 
\begin{equation}
\left|\frac{1-\varphi_Z(\theta/\Lambda^{2n})}{|\theta/\Lambda^{2n}|^\gamma}\right|\ge \delta^{n-n_0} 
\left|\frac{1-\varphi_Z(\theta/\Lambda^{2n_0})}{|\theta/\Lambda^{2n_0}|^\gamma}\right| ,
\end{equation}
which implies that
\begin{equation}
\lim_{n \to \infty}
\left|\frac{1-\varphi_Z(\theta/\Lambda^{2n})}{|\theta/\Lambda^{2n}|^\gamma}\right|=\infty\, .
\end{equation}
This contradicts \eqref{degenerate}, and thus the fact that $c_\theta=0$, ending the proof of Lemma \ref{minolin}.
\end{proof}

\subsection{Final arguments} 
\label{sec:proofCase2}
\begin{proof}[Proof of Theorem~\ref{THMcase2}]
Let us first consider the case of the ``overscaling'' $b_n\gg n^{1/\a}$. Applying the the Markov inequality under the conditioning to $L = (L_k)_{k\in\mathbb{N}}$, we get that, for all $\delta>0$,
\begin{equation}
\P_0(| M_n/b_n|>\delta) = E_0 \big[ \P_0(| M_n/b_n|>\delta \,|\, L)  \big] \le 
E_0 \!\left[ \min \!\left( 1, \frac{V_n}{b_n^2\delta^2} \right) \right] .
\end{equation}
It follows from Corollary \ref{biggamma} that $V_n/b_n^2$, and thus $\min\!\Big( 1,\frac{V_n} {b_n^2\delta^2} \Big)$, converges in $P_0$-probability to 0. So the above l.h.s.\ vanishes for $n\to\infty$. We conclude that $M_n/b_n$ converges to 0 in probability, therefore in distribution, relative to $\P_0$. This is equivalent to \eqref{upper} via Lemma \ref{lemZwei}.

It remains to prove the second part of the theorem. In this case, $b_n\ll n^{1/\alpha}$. Conditionally to $L$, $M_n$ is a centered Gaussian with variance $V_n$. So, for all $x>0$,
\begin{equation}
\P_\nu (|M_n|>x) = E_\nu\! \left[ G(x/\sqrt{V_n}) \right] ,
\end{equation}
where $G(x) = \P(|\xi_0|>x)$. Setting $n_j:=\lfloor \Lambda^{\alpha j}\rfloor$, it follows from \eqref{conv:Z}, combined with Remark \ref{rem-measure}, that $V_{n_j}/n_j^{2/\a}$ converges in distribution to $\widetilde Z$, w.r.t.\ $P_{\nu}$, as $j \to \infty$. Moreover, $\widetilde Z$ has no atom in 0 (because $\varphi_{\widetilde Z}(\theta) \to 0$, as $\theta \to +\infty$). As a consequence, $b_{n_j}^2/V_{n_j}$ converges to 0 in distribution (and so in probability) w.r.t.\ $P_\nu$, whence, for all $r>0$, 
\begin{equation}
\P_\nu (|M_{n_j}|/b_{n_j}>r) = E_\nu\!\left[ G\!\left(r \, b_{n_j}/\sqrt{V_{n_j}} \,\right)\right],
\end{equation}
which tends to 1 as $j\to \infty$, thus providing \eqref{lower}.
\end{proof}

\begin{proof}[Proof of Theorem~\ref{THMcase2-convergence}]
Using the assumption that $\xi_k$ are standard Gaussians, we obtain
\begin{equation}
\begin{split}
\E_0 \!\left[\left.\exp\left(\frac{i\theta M_n}{n^{1/\a}} \right)\right| L\right]
&= \E_0 \!\left[\exp\left(\left.\frac{i\theta}{n^{1/\a}} \sum_{j=0}^{n-1} \Lambda ^{L_j}\xi_j \right) \right| L\right] \\
&= \prod_{j=0}^{n-1} \exp\left(-\frac{\theta^2}{2n^{2/\a}} \Lambda^{2L_j} \right) .
\end{split}
\end{equation}
As a consequence, by definition \eqref{def:Vn},
\begin{equation} \label{conditionalcarfunc}
\E_0 \!\left[\left. \exp\left(\frac{i\theta M_n}{n^{1/\a}} \right) \right|L \right]
 = \exp \!\left(-\frac{\theta^2V_n}{2n^{2/\a}}\right) ,
\end{equation}
and taking the average, we get
\begin{equation} \label{carfunc}
\E_0 \!\left[\exp\left(\frac{i\theta M_n}{n^{1/\a}}\right)\right]
= E_0 \!\left[\exp\left(-\frac{\theta^2V_n}{2n^{2/\a}}\right)\right] .
\end{equation}
Let us first focus on item (\ref{item1}).
We assume that the function $\R^+\ni\theta \mapsto c_{\theta}$ is not constant.
It follows from Corollary~\ref{CoroCase3} that, when $n\to\infty$, $V_{\tau_0^{(n)}}/n^{2/\a}$ does not converge in distribution w.r.t.~$P_0$, hence neither w.r.t.~$P_{\mu}$ (Lemma~\ref{lemZwei}).

It remains to show that the fact that $V_{\tau_0^{(n)}}/n^{2/\a}$ does not converge in distribution w.r.t.\ $P_{\mu}$ implies that $M_n/n^{1/\a}$ does not converge in distribution w.r.t.\ $\P_{\mu}$. To this end we prove the contrapositive. 
Assume that $M_n/n^{1/\a}$ converges in distribution to a random variable with characteristic function $\varphi$, and let us deduce that $V_{\tau_0^{(n)}}/n^{2/\a}$ also converges in distribution. 
Notice that it follows from \eqref{carfunc} and Lemma \ref{lemZwei} that the convergence in distribution of $M_n/n^{1/\a}$ to a random variable with characteristic function $\varphi$ is equivalent to the convergence in distribution of $V_n/n^{2/\a}$ to a random variable $\widetilde{V}$ with Laplace transform $\vartheta\mapsto \varphi(\sqrt{2\vartheta})$. We claim that this implies that $V_{\tau_0^{(n)}}/n^{2/\a}$ converges in distribution to $(E_0[\tau_0])^{2/\a} \, \widetilde{V}$.

To prove the claim, we start by noticing that 
since the tail distribution of $\tau_0$ decays exponentially, we have
\begin{equation} \label{omega}
\lim_{n \to \infty} P_0\!\left(  n E_0[\tau_0]-n^{3/4}\le \tau_0^{(n)}\le n E_0[\tau_0]+n^{3/4}\right)=1 \, .
\end{equation}
Moreover, since $n\mapsto V_n$ is increasing, the following inequalities 
hold true on the set $\Omega_n:=\{n E_0[\tau_0]-n^{3/4}\le \tau_0^{(n)}\le n E_0[\tau_0]+n^{3/4}\}$,
\begin{equation}
n^{-2/\a} \, V_{n E_0[\tau_0]-n^{3/4}}\le n^{-2/\a} \, V_{\tau_0^{(n)}}\le n^{-2/\a} \, V_{n E_0[\tau_0]+n^{3/4}}\,.
\end{equation}
On the other hand, under $P_{\mu}$, $n^{-2/\a} \big(V_{n E_0[\tau_0]+n^{3/4}} -V_{n E_0[\tau_0]-n^{3/4}} \big)$
has the same distribution as $n^{-2/\a} \, V_{2n^{3/4}}$,
which converges in distribution (equivalently, in probability) to 0.
Thus we can write
\begin{equation}
n^{-2/\a} \, V_{\tau_0^{(n)}}=A_n+B_n+C_n\, ,
\end{equation}
where
\begin{equation}
\begin{split}
A_n &:= n^{-2/\a} \, V_{n E_0[\tau_0]-n^{3/4}} \, \mathbf 1_{\Omega_n}, \\
B_n &:= n^{-2/\a} \, V_{\tau_0^{(n)}} \, \mathbf 1_{\Omega_n^c} , \\
C_n &:= n^{-2/\a} \big( V_{\tau_0^{(n)}}-V_{n E_0[\tau_0]-n^{3/4}} \big) 
\mathbf 1_{\Omega_n} .
\end{split}
\end{equation}
Since $C_n\in \big[0, n^{-2/\a} \big(V_{n E_0[\tau_0]+n^{3/4}}-V_{n E_0[\tau_0]-n^{3/4}} \big) \big]$, we conclude that $C_n\to 0$ in distribution
(and in probability).
By the definition of $\Omega_n$ and \eqref{omega}, 
$B_n$ also converges to $0$, while $A_n$ converges in distribution to $(E_0[\tau_0])^{2/\a} \, \widetilde{V}$. These three convergences, together with Slutzky's Theorem, end the proof of the contrapositive and thus conclude the proof of item (\ref{item1}). 

\medskip

We finally provide the proof of item (\ref{item2}).
As before, it suffices to restrict our attention to $\theta>0$.
We assume that $c_\theta$ is equal to a constant $c$, for all $\theta>0$. 
Proposition \ref{asymptphi} ensures that $c\ne 0$. Moreover, there exist $\theta_0, C, a>0$ such that
\begin{equation}
\sup_{\theta\in (\theta_0/\Lambda^{2},\theta_0]} \left| (1-\varphi_Z(\theta\Lambda^{-2n}))\Lambda^{n\alpha}-c\,\theta^{\a/2}\right|
\le Ce^{-an} \,.
\end{equation}
This implies that
\begin{equation} \label{22}
\begin{split}
\sup_{\theta\in (\theta_0/\Lambda^2,\theta_0]} \left| 
\frac{1-\varphi_Z(\theta\Lambda^{-2n})}{(\theta/\Lambda^{2n})^{\alpha/2}}-c \,\right|
&\le \sup_{\theta\in (\theta_0/\Lambda^2,\theta_0]} \theta^{-\a/2}\left| (1-\varphi_Z(\theta\Lambda^{-2n}))\Lambda^{n\alpha}-c\, \theta^{\a/2}\right| \\
&\le (\theta_0/\Lambda^2)^{-\a/2} \, Ce^{-an} =: C' e^{-an} \,.
\end{split}
\end{equation}
Let $n\in\N$ and $\theta>0$ be such that 
\begin{equation} \label{23}
\theta_0/\Lambda^{2(n+1)} < \theta \le \theta_0/\Lambda^{2n} \,.
\end{equation}
Then $s:=\theta\Lambda^{2n}$ satisfies $s\in (\theta_0 / \Lambda^2,\theta_0]$ and so
\begin{equation}
\left| \frac{1-\varphi_Z(\theta)}{\theta^{\alpha/2}}-c\, \right| 
= \left| \frac{1-\varphi_Z(s/\Lambda^{2n})}{(s/\Lambda^{2n})^{\alpha/2}}
-c\, \right| \le C' e^{-an} = 
C'\left(\Lambda^{-2n}\right)^{\frac a{2\log \Lambda}} \,,
\end{equation}
by \eqref{22}. Combining this with \eqref{23}, we obtain
\begin{equation}
\left|\frac{1-\varphi_Z(\theta)}{\theta^{\alpha/2}}-c\, \right|
\le C'\left(\frac {\Lambda^2}{\theta_0}\theta_0\Lambda^{-2(n+1)}\right)^{\frac a{2\log \Lambda}} < 
C'\left(\frac {\Lambda^2}{\theta_0} \theta\right)^{\frac a{2\log \Lambda}} \, .
\end{equation}
We conclude that 
\begin{equation} \label{eq:asymptphi}
1-\varphi_Z(\theta)\sim c\, \theta^{\a/2}\,, \quad \mbox{as }\theta\to 0^+\, .
\end{equation}

Since $V_{\tau_0^{(n)}}-V_{\tau_0^{(n-1)}}$ ($n\in\N$), are i.i.d.\  variables with common characteristic function $\varphi_Z$ satisfying \eqref{eq:asymptphi}, $\big( V_{\tau_0^{(\lfloor n t\rfloor)}}/n^{2/\a} \big)_{t\ge 0}$ converges in distribution, w.r.t.~$P_0$, to an $(\alpha/2)$-stable process $(\widetilde Z_t)_{t\ge 0}$ with independent increments, such that the characteristic function of $\widetilde Z_1$, restricted to $\theta>0$, is $\theta\mapsto e^{-c\,\theta^{\a/2}}$. Since $\widetilde Z_1$ is a positive $(\a/2)$-stable variable, its characteristic function must have the form 
\begin{equation}
\theta\mapsto e^{-c'|\theta|^{\a/2}(1-i\,\sgn(\theta)\tan(\pi\a/4))} ,
\end{equation}
with $c'>0$ \cite[Chap.~2]{IL}. Comparing the two expressions we get
\begin{equation}
c'=\frac c{1-i\tan(\pi\a/4)}=c\cos(\pi\a/4) \, e^{i\pi\a/4} \, .
\end{equation} 
Therefore, the Laplace transform of $\widetilde Z_1$ is given by
\begin{equation}
E_0\!\left[e^{-\vartheta \widetilde Z_1}\right] =
e^{-c\, \vartheta^{\a/2} \, e^{i\pi\a/4}} =
e^{-\frac{c' \vartheta^{\a/2}}{\cos(\pi \a/4)}} \quad (\vartheta \ge 0)\, .
\end{equation}

To deal with the change of time $n\mapsto\t_0^{(n)}$, recall that
\begin{equation} \label{sand}
V_{\tau_0^{(N_{\lfloor nt\rfloor}(0))}}
\le V_{\lfloor nt\rfloor}
< V_{\tau_0^{(1+N_{\lfloor nt\rfloor}(0))}} \, ,
\end{equation}
where $N_n(0)$ is the local time of $L$ at 0 up to time $n$.
We know that $N_n(0)/n$ converges $P_0$-almost surely to $\mu_0$, and that, as a consequence, the processes $\left( N_{\lfloor nt\rfloor}(0)/n \right)_{t\ge 0}$ converges $P_0$-almost surely, uniformly on every compact, to the deterministic limit $\mu_0 \, \mathrm{id}$. 
 
Combining the convergence results for $\big( \big( V_{\tau_0^{(\lfloor nt\rfloor)}}/n^{2/\a} \big)_{t\ge 0} \big)_{n\ge 1}$ and for the sequence of increasing processes $\big( \big( N_{\lfloor nt\rfloor}(0)/n \big)_{t\ge 0} \big)_{n\ge 1}$, we conclude from \eqref{sand}, applying \cite[Thm.~13.2.3]{WW1}, that $\big( V_{\lfloor nt\rfloor}/n^{2/\a} \big)_{t\ge0}$ converges to $\big( \widetilde Z_{\mu_0 t} \big)_{t\ge 0}$, both in distribution for the Skorokhod $M_1$-topology and in the sense of finite-dimensional distributions.
Therefore, for any $t_1, \ldots, t_p \in [0,+\infty)$ with $0=t_0 \le t_1 \le \ldots \le t_p$ and $u_1, \ldots, u_p \in [0,+\infty)$, we have
\begin{equation}
\lim_{n \to \infty} E_0 \!\left[\exp \!\left( -\sum_{j=1}^p u_j \,
\frac{V_{\lfloor nt_j\rfloor} - V_{\lfloor nt_{j-1}\rfloor}}{n^{2/\a}} 
\right)\right] =\prod_{j=1}^p e^{-c\, \mu_0 \, e^{i\pi\a/4} \, (t_j-t_{j-1})\, u_j^{\a/2}} .
\end{equation}
It follows from \eqref{conditionalcarfunc} that, for all $\theta_1, \ldots, \theta_p \in\R$,
\begin{equation}
\E_0 \!\left[ \exp \!\left( \sum_{j=1}^p i\theta_j \, 
\frac{M_{\lfloor nt_j\rfloor} - M_{\lfloor nt_{j-1}\rfloor}}
{n^{2/\a}} \right) \right] =
E_0 \!\left[\exp \!\left( -\frac 12\sum_{j=1}^p \theta_j^2 \,
\frac{V_{\lfloor nt_j\rfloor} - V_{\lfloor nt_{j-1}\rfloor}}
{n^{2/\a}} \right) \right] .
\end{equation}

In conclusion,
\begin{equation}
\lim_{n \to \infty} \E_0 \!\left[\exp\left(
\sum_{j=1}^p i\theta_j \, \frac{M_{\lfloor nt_j\rfloor} - 
M_{\lfloor nt_{j-1}\rfloor}}{n^{2/\a}} \right) \right] =
\prod_{j=1}^p e^{-\frac c2\, \mu_0 \, e^{i\pi\a/4} \, (t_j-t_{j-1})|\theta_j|^\alpha } \, ,
\end{equation}
which corresponds to the joint characteristic function of the increments $\mathcal{Y}_{t_j} - \mathcal{Y}_{t_{j-1}}$ ($j=1, \ldots, p$) of a symmetric $\alpha$-stable process $( \mathcal{Y}_{t} )_{t\ge 0}$, as claimed, with 
\begin{equation} \label{c-tilde}
\widetilde c:=\frac{c\, \mu_0}2 \, e^{i\pi\a/4}\, .
\end{equation}
\end{proof}

\section{\texorpdfstring{$Z$}{Z} is not in the domain of attraction of a stable random variable}
\label{sec-cap}

Here we establish a number of mathematical propositions that can be used in
conjunction with certified numerical computations
to prove that $Z$ is not in the domain of attraction of a stable random
variable. (By `certified numerical computation' we mean a computation
consisting of a finite number of operations whose numerical result is
endowed with a rigorously proved bound for the maximum error. The proof
of the bound may be provided by a human or a computer working in
interval arithmetic --- hence with absolute precision.)

By Proposition \ref{CoroCase3}, it will suffice to
establish that $\theta \mapsto c_\theta$ (recall the definition from
Proposition \ref{asymptphi}) is not a constant \fn. We believe the
following:

\begin{conjecture} \label{conjecture}
  For all $\Lambda>1$ and $\a \in (0,2)$, there exist 
  $\theta_1, \theta_2 > 0$ such that $c_{\theta_1} \ne 
  c_{\theta_2}$.
\end{conjecture}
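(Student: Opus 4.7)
The plan is to exploit the contraction-mapping structure already established in the paper. By Corollary \ref{CoroCase3}, to refute constancy of $\theta\mapsto c_\theta$ it suffices to exhibit $\theta_1,\theta_2>0$ with $c_{\theta_1}\ne c_{\theta_2}$. Moreover, because $c_\theta = c_{\theta/\Lambda^2}$ by the very definition \eqref{def:ctheta}, the function is log-periodic with period $2\log\Lambda$, and it is enough to search for such values inside one fundamental domain, e.g.\ $(\theta_0/\Lambda^2,\theta_0]$. I will therefore aim to produce, for a chosen $(\Lambda,\alpha)$, certified numerical approximations of $c_{\theta_1}$ and $c_{\theta_2}$ sharp enough to exclude coincidence.

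The first ingredient is to evaluate $\varphi_Z$ at a prescribed point to arbitrary precision. The Remark following \eqref{EQphi} shows that $\mathcal{F}$ is a contraction on the space of characteristic functions with sup-norm ratio $\pup/\pdown<1$. Hence, starting from any fixed characteristic function $\psi_0$ (e.g.\ $\psi_0\equiv 1$, the law of $\delta_0$), the iterate $\mathcal{F}^N(\psi_0)(\theta)$ is evaluated by $N$ explicit algebraic operations and satisfies
\begin{equation}
  \left|\mathcal{F}^N(\psi_0)(\theta) - \varphi_Z(\theta)\right| \le 2\left(\pup/\pdown\right)^N,
\end{equation}
which can be made arbitrarily small. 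This yields a certified approximation of $\varphi_Z(\theta)$ at chosen anchor points.

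The second ingredient is to extract $c_\theta$ from values of $\varphi_Z$ near zero. Writing $a_n:=1-\varphi_Z(\theta/\Lambda^{2n})$, the functional equation \eqref{1-phi} gives the explicit forward recursion
\begin{equation}
  a_{n+1} = \frac{1 - e^{i(1+\Lambda^2)\theta/\Lambda^{2(n+1)}} + \Lambda^{-\alpha} a_n}{1 + \Lambda^{-\alpha} a_n}, \qquad a_0 = 1-\varphi_Z(\theta),
\end{equation}
and by Proposition \ref{asymptphi} the approximants $a_n\Lambda^{n\alpha}/\theta^{\alpha/2}$ tend to $c_\theta$ at exponential rate, uniformly in $\theta\in(\theta_0/\Lambda^2,\theta_0]$. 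The iteration is contracting by $\Lambda^{-\alpha}$, so a careful propagation of errors (or interval arithmetic) yields a rigorous bound on $|c_\theta - a_n\Lambda^{n\alpha}/\theta^{\alpha/2}|$ for any target $n$. Combining with the bound from the previous paragraph one obtains an explicit, certifiable approximation of $c_\theta$.

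The final step is to select specific $\theta_1,\theta_2\in(\theta_0/\Lambda^2,\theta_0]$, compute $c_{\theta_1},c_{\theta_2}$ up to some error $\eps$, and verify numerically that $|c_{\theta_1}-c_{\theta_2}|>2\eps$. The main obstacle is that this only settles the conjecture for the particular $(\Lambda,\alpha,\theta_1,\theta_2)$ tested: no uniform analytic argument is apparent, and indeed one expects $|c_{\theta_1}-c_{\theta_2}|$ to shrink in parameter regions approaching the normal-fluctuation regime (e.g.\ $\alpha\to 2^-$, or $\pup/\pdown\to(\Lambda^{-2})^-$), so that ever higher precision is needed as $(\Lambda,\alpha)$ move through parameter space. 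A full proof therefore requires either an infinite-family argument (which I do not see how to carry out) or an exhaustive certification whose cost degrades near the boundary; the computer-assisted argument below will accordingly handle ``quite a number of cases'' rather than all.
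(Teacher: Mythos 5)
You have correctly recognized that the statement is a conjecture that the paper itself does not prove: what is actually established (Proposition \ref{prop-conj}) is only that there exist examples of $\Lambda, \a, \theta_1, \theta_2$ with $c_{\theta_1}\ne c_{\theta_2}$, via a finite certified computation, and your closing paragraph honestly scopes your argument the same way. Your overall strategy matches the paper's --- reduce to a fundamental domain using $c_\theta=c_{\theta/\Lambda^2}$, obtain certified values of $1-\varphi_Z$ at two anchor points, push down to small $\theta$ via the recursion \eqref{1-phi}, and conclude by a finite inequality check --- but the mechanism differs in two ways. First, you evaluate $\varphi_Z$ at arbitrary points by iterating the contraction $\mathcal F$ from $\psi_0\equiv 1$; the paper instead uses the exact values $\varphi_Z(2\pi k/(\Lambda^2+\Lambda^4))=e^{i2\pi k/\Lambda^2}$, available because $Z$ is supported on $(1+\Lambda^2)+(\Lambda^2+\Lambda^4)\N$. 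Your alternative is legitimate and arguably more flexible (the paper's anchors are never small enough by themselves and must be pushed down anyway), though you should note that the absolute error $2(\pup/\pdown)^N$ must be made small relative to $|1-\varphi_Z(\theta)|\asymp\theta^{\a/2}$, not merely small.

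The genuine gap is in your second ingredient. Forward propagation of errors through the recursion for $a_n$ controls $|a_n^{\mathrm{computed}}-a_n|$, but it does not control the truncation error $|a_n\Lambda^{n\a}\theta^{-\a/2}-c_\theta|$, which is the tail $\theta^{-\a/2}\sum_{m>n}\Lambda^{\a m}b_m$ of a series whose summands involve the \emph{uncomputed} future of the recursion. Bounding that tail requires explicit, uniform-in-$m$ constants in place of the $\mathcal O(\cdot)$'s in the proof of Proposition \ref{asymptphi}, which in turn depend on an $\varepsilon$ and a $\theta_0$ defined through an a priori bound on $\sup|1-\varphi_Z|$. You assert this can be done by ``careful propagation of errors,'' but producing such an explicit majorant is precisely the substantive content of the paper's Section 5: the decomposition $A=T+N$ in \eqref{atn}, the combinatorial estimate of Lemma \ref{lem-est}, and the computable bounds $\Psi_q(\theta)$ of Remark \ref{rk-psiq}, which hold uniformly in $n$ and thereby convert finitely many computations into a statement about the limit $c_\theta$. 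Without an analogue of $\Psi_q$ your scheme cannot be certified. A minor further difference: the paper never computes $c_{\theta_1},c_{\theta_2}$ with error bars; it bounds the ratio $|\ell(\Lambda^{-2n}\theta_2)/\ell(\Lambda^{-2n}\theta_1)|$ away from $1$ uniformly in $n$, a slightly weaker and computationally easier target than your $|c_{\theta_1}-c_{\theta_2}|>2\eps$.
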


In fact, up to certifying our numerics (which will be abundantly
accurate nonetheless), we prove the following.

\begin{proposition} \label{prop-conj}
  There are examples of $\Lambda$, $\a, \theta_1, \theta_2$ for which
  $c_{\theta_1} \ne c_{\theta_2}$.
\end{proposition}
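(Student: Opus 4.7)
The plan is to reduce the conjecture, for a specific quadruple $(\Lambda, \alpha, \pup, \pdown)$, to a finite certified computation, and then carry out that computation. Since $c_\theta = c_{\theta/\Lambda^2}$ by construction, cf.~\eqref{def:ctheta}, a fundamental domain for $c_\theta$ is $(\theta_0/\Lambda^2, \theta_0]$, with $\theta_0$ as in Proposition~\ref{asymptphi}; I will fix two test values $\theta_1, \theta_2$ in this interval and certify numerically that $c_{\theta_1} \ne c_{\theta_2}$. The two sources of error to control are: the truncation of the limit in Proposition~\ref{asymptphi} (which replaces $c_\theta$ by $\ell(\theta/\Lambda^{2n})$ with exponential error in $n$), and the numerical approximation of $\varphi_Z$ needed to evaluate this quantity at the relevant points.

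For the first error, Proposition~\ref{asymptphi} directly yields $|\ell(\theta/\Lambda^{2n}) - c_\theta| \le K \rho^n$, uniformly in $\theta \in (\theta_0/\Lambda^2, \theta_0]$, for an explicit rate $\rho \in (0,1)$ and constant $K > 0$ extractable from its proof (in particular $\rho = \max(\Lambda^{-2}, r^2)$ with $r = (1-\varepsilon)^{-1}\Lambda^{-\alpha}$). For the second, I invoke the contraction property of the operator $\mathcal{F}$ on the space of characteristic functions noted in the remark following \eqref{EQphi}: starting from the trivial characteristic function $\psi_0 \equiv 1$, the iterate $\mathcal{F}^N(\psi_0)(\theta)$ is computable in closed form through $O(N)$ nested algebraic operations, and $\|\mathcal{F}^N(\psi_0) - \varphi_Z\|_\infty \le 2 \Lambda^{-\alpha N}$. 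Combining, for any $\theta \in (\theta_0/\Lambda^2, \theta_0]$,
\begin{equation*}
\left| \frac{1 - \mathcal{F}^N(\psi_0)(\theta/\Lambda^{2n})}{(\theta/\Lambda^{2n})^{\alpha/2}} - c_\theta \right| \le K \rho^n + 2 \Lambda^{\alpha(n-N)} (\theta_0/\Lambda^{2})^{-\alpha/2},
\end{equation*}
the second term arising because the denominator amplifies the pointwise $\varphi_Z$-error by a factor of order $\Lambda^{\alpha n}$.

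Next, I fix concrete parameters (say $\Lambda = 2$ and $\alpha = 1$, so $\pup = 1/3$, $\pdown = 2/3$), determine $\theta_0$ explicitly from the proof of Proposition~\ref{asymptphi}, and select $\theta_1, \theta_2$ well separated in $(\theta_0/\Lambda^2, \theta_0]$. A preliminary uncertified scan of $\theta \mapsto \widetilde{\ell}_{n,N}(\theta) := (1 - \mathcal{F}^N(\psi_0)(\theta/\Lambda^{2n}))/(\theta/\Lambda^{2n})^{\alpha/2}$ identifies candidate values where this quantity stabilizes, as $n$ grows, at visibly different numerical limits $\widetilde{c}_1 \ne \widetilde{c}_2$; call $\delta := |\widetilde{c}_1 - \widetilde{c}_2|$ the heuristic gap. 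I then pick $n$ and $N$ (e.g.\ $N = Cn$ with $C > 1$ chosen so that $\Lambda^{\alpha(n-N)}$ dominates $\rho^n$) large enough that $K \rho^n + 2 \Lambda^{\alpha(n-N)} (\theta_0/\Lambda^2)^{-\alpha/2} < \delta/4$, and verify in interval arithmetic that $|\widetilde{\ell}_{n,N}(\theta_1) - \widetilde{\ell}_{n,N}(\theta_2)| > \delta/2$. Together with the two-sided error bound above, this certifies $c_{\theta_1} \ne c_{\theta_2}$.

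The main obstacle is the error balance between the two truncations: the factor $\Lambda^{\alpha n}$ in the $\varphi_Z$-error forces $N$ to be taken substantially larger than $n$, which makes the iterated closed-form expression for $\mathcal{F}^N(\psi_0)$ rapidly complex to handle rigorously. There is also no a priori lower bound on the gap $|c_{\theta_1} - c_{\theta_2}|$, so one must first locate, by exploratory computation, a parameter configuration where the gap is numerically large enough to beat the achievable error. Once such a configuration is fixed, the remainder of the argument is a finite, explicitly verifiable computation of the kind the authors describe as computer-assisted in the introduction.
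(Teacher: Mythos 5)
Your scheme is viable in outline but takes a genuinely different route from the paper, and it has one real gap. The paper never approximates $\varphi_Z$ at all: it exploits the fact that $Z$ is supported on the lattice $(1+\Lambda^2)+(\Lambda^2+\Lambda^4)\N$, so that $\varphi_Z$ --- hence $\ell$ --- is known \emph{exactly} at the points $2\pi k/(\Lambda^2+\Lambda^4)$; these exact values are then propagated to $\ell(\Lambda^{-2n}s_j)$ by the exact matrix recursion \eqref{fundamental}, and the deviation of the full matrix product from the explicit triangular product is controlled by the fully explicit bound $\Psi_q(\theta)$ of Lemma \ref{lem-est}. Crucially, that bound is uniform in $n$, so the ratio $\ell(\Lambda^{-2n}\theta_2)/\ell(\Lambda^{-2n}\theta_1)$ is controlled for \emph{all} $n$ simultaneously and Proposition \ref{asymptphi} is used only qualitatively (existence of the limit $c_\theta$), never quantitatively. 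Your approach instead stacks two truncation errors, one in $n$ (convergence of $\ell(\theta/\Lambda^{2n})$ to $c_\theta$) and one in $N$ (convergence of $\mathcal F^N(\psi_0)$ to $\varphi_Z$), and the second is amplified by $\Lambda^{\alpha n}$, forcing $N\gg n$. That part is workable --- evaluating $\mathcal F^N(\psi_0)$ at one point is just a backward recursion of $N$ rational steps, so your worry about the "closed form" becoming unmanageable is overstated.

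The genuine gap is the first error term. You assert that an explicit constant $K$ and rate $\rho$ in $|\ell(\theta/\Lambda^{2n})-c_\theta|\le K\rho^n$ are "extractable" from the proof of Proposition \ref{asymptphi}, but those constants are built from $\theta_0$ and $\varepsilon$, which are themselves defined by the condition $\sup_{(0,\theta_0]}|1-\varphi_Z|\le\varepsilon\Lambda^\alpha$ --- a supremum of the very unknown function you are trying to approximate, over a whole interval, not at finitely many points. Since $E[Z]=\infty$ here, you cannot bound $|1-\varphi_Z(\theta)|$ by $\theta\,E[Z]$; you would need an additional a priori argument (e.g.\ a tail bound $|1-\varphi_Z(\theta)|\le 2P(Z>A)+|\theta|A$ with an explicit estimate of $P(Z>A)$ via the excursion structure) to make $\theta_0$, and hence $K$ and $\rho$, certified numbers. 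Without that, the inequality $K\rho^n<\delta/4$ cannot be verified and the scheme does not close. This is exactly the difficulty the paper's architecture is designed to avoid, which is what the exact lattice values and the $n$-uniform bound $\Psi_q$ buy.
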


We start by introducing the convenient parameter $\anot := \a/2$. 
In order to study $c_\theta = \lim_{n\to\infty} \ell(\Lambda^{-2n} \theta)$, we apply
(\ref{fixed-ell0}) to $\theta/\Lambda^2$ in place of $\theta$ and rewrite the resulting
equation in terms of a fractional linear equation:
\begin{equation} 
  \left[ \begin{array}{c}
    c \ell(\Lambda^{-2} \theta) \\ c
  \end{array} \right] = 
   \left[ \begin{array}{cc}
    1 & \frac{1-e^{i(1+\Lambda^2) \theta/\Lambda^2 }} {(\theta/\Lambda^2 )^\anot} \\[6pt]
    (\theta/\Lambda^2)^\anot & 1 
  \end{array} \right] 
  \left[ \begin{array}{c}
    \ell(\theta) \\ 1
  \end{array} \right] 
  =: A(\theta)
  \left[ \begin{array}{c}
    \ell(\theta) \\ 1
  \end{array} \right] ,
\end{equation}
where $c \ne 0$ depends on $\theta$ and $\ell(\theta)$. If follows that, for some (other)
$c\ne 0$,
\begin{equation} \label{fundamental}
  c \left[ \begin{array}{c}
    \ell(\Lambda^{-2n} \theta) \\ 1
  \end{array} \right] 
  =
  A(\Lambda^{-2(n-1)} \theta) \, A(\Lambda^{-2(n-2)} \theta) \cdots A(\theta) 
  \left[ \begin{array}{c}
    \ell(\theta) \\ 1
  \end{array} \right] .
\end{equation}
Let us denote
\begin{equation} \label{atn}
  A(\theta) = T(\theta) + N(\theta) :=
  \left[ \begin{array}{cc}
    1 & 0 \\[6pt]
    (\theta/\Lambda^2)^\anot & 1 
  \end{array} \right]
  +
  \left[ \begin{array}{cc}
    0 & \frac{1-e^{i(1+\Lambda^2) \theta/\Lambda^2 }} {(\theta/\Lambda^2 )^\anot} \\[6pt]
    0 & 0 
  \end{array} \right] .
\end{equation}
Given the simple group property verified by the lower-triangular matrices $T$, 
it would be convenient to replace all the $A$-matrices in (\ref{fundamental}) with 
the corresponding $T$-matrices. We do so and give an estimate of the error.

\begin{lemma} \label{lem-est}
  For all $\theta>0$ and all $n \in \N$,
  \begin{displaymath}
  \left\| A(\Lambda^{-2(n-1)} \theta) \, A(\Lambda^{-2(n-2)} \theta) \cdots A(\theta) - 
  T(\Lambda^{-2(n-1)} \theta) \, T(\Lambda^{-2(n-2)} \theta) \cdots T(\theta) \right\| \le \Psi(\theta),
  \end{displaymath}
  where $\| \cdot \|$ denotes the Euclidean operator norm for $2 \times 2$ 
  complex matrices and
  \begin{equation} \label{psi}
    \Psi(\theta) := \prod_{j=1}^\infty \left[ 1 + (\Lambda^{-2j}\theta)^\anot \right]
    \cdot \sum_{k=1}^\infty \frac{ (1+\Lambda^2) ^k (\Lambda^{-2k} \theta)^{k - \anot} } 
    { (1 - \Lambda^{-2\anot})^{k-1} \prod_{l=1}^k (1 - \Lambda^{-2(l-\anot)}) } .
  \end{equation}
  Clearly, also, 
  \begin{equation} \label{prod-of-t}
    T(\Lambda^{-2(n-1)} \theta) \, T(\Lambda^{-2(n-2)} \theta) \cdots T(\theta) =
    \left[ \begin{array}{cc}
      1 & 0 \\[4pt]
      \frac{1-\Lambda^{-2\anot n}} {1 - \Lambda^{-2\anot}} (\theta/\Lambda^2 )^\anot & 1 
    \end{array} \right] .
  \end{equation}
\end{lemma}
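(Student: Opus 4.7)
The plan is to expand the product $A_{n-1}\cdots A_0$ (writing $A_m := A(\Lambda^{-2m}\theta)$, and similarly $T_m, N_m$) using $A_m = T_m + N_m$, isolate the all-$T$ term, and bound the remainder by carefully using the rank-1 structure of $N_m$. Concretely,
\begin{equation*}
  A_{n-1}\cdots A_0 - T_{n-1}\cdots T_0 = \sum_{\emptyset \ne S \subseteq \{0,\ldots,n-1\}} M_S,
\end{equation*}
where $M_S$ is the product in which $N_m$ replaces $T_m$ exactly at the positions $m \in S$. Each $M_S$ will be controlled in operator norm and summed over $S$, grouped by $k := |S|$.

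The key simplification is that $N_m = b_m \, e_1 e_2^T$ with $b_m = (1-e^{i(1+\Lambda^2)\Lambda^{-2m}\theta})/(\Lambda^{-2m}\theta)^{\anot}$, so $N_m$ has rank one. Each consecutive block of $T$-factors collapses to a single lower-triangular matrix $I + s\, e_2 e_1^T$ with $s$ a sum of the $t_m = (\Lambda^{-2(m+1)}\theta)^{\anot}$ (this also proves \eqref{prod-of-t} at once, by commutativity and geometric summation). A direct computation, using $e_2^T e_1 = 0$ to contract $N$-$T$-$N$ sandwiches, gives, for $S = \{j_1 < j_2 < \cdots < j_k\}$,
\begin{equation*}
  M_S = \left(\prod_{i=1}^k b_{j_i}\right) \left(\prod_{i=1}^{k-1} s_i\right) \begin{pmatrix} 1 \\ s_k \end{pmatrix} \begin{pmatrix} s_0 & 1 \end{pmatrix},
\end{equation*}
where $s_0$, $s_k$ are the $T$-sums over the initial and final blocks and $s_i$ the $T$-sum over the block between $N_{j_i}$ and $N_{j_{i+1}}$. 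The operator norm of the outer product is $\sqrt{(1+|s_0|^2)(1+|s_k|^2)} \le (1+|s_0|)(1+|s_k|)$, and the disjointness of the $j<j_1$ and $j>j_k$ index sets yields $(1+|s_0|)(1+|s_k|) \le \prod_{j \ge 1}(1+(\Lambda^{-2j}\theta)^{\anot})$, which is exactly the prefactor of $\Psi(\theta)$.

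It then remains to bound $\sum_k \sum_{j_1<\cdots<j_k} \prod_i |b_{j_i}| \prod_{i=1}^{k-1} |s_i|$ by the displayed sum in \eqref{psi}. Use $|1-e^{ix}| \le |x|$ to get $|b_{j_i}| \le (1+\Lambda^2)(\Lambda^{-2j_i}\theta)^{1-\anot}$, and the geometric tail bound $|s_i| \le (\Lambda^{-2(j_i+1)}\theta)^{\anot}/(1-\Lambda^{-2\anot})$ for $1 \le i \le k-1$. The resulting multiple sum decouples, via the change of variables $m_l := j_l - j_{l-1}$ (with $j_0 := 0$, all $m_l \ge 1$), into a product of $k$ independent geometric series with ratios $\Lambda^{-2(l-\anot)}$, producing the denominator $\prod_{l=1}^k (1-\Lambda^{-2(l-\anot)})$ of \eqref{psi}; the residual powers of $\Lambda$ and $\theta$ combine into $(\Lambda^{-2k}\theta)^{k-\anot}$ after identification. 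The assumptions $\anot \in (0,1)$ and $\Lambda>1$ make every geometric sum involved convergent.

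The main technical obstacle is step (b): carrying out the rank-1 contraction cleanly through arbitrarily many alternations of $N_m$'s and $T$-blocks and, in the subsequent summation, reindexing the gap-length variables so the exponents of $\Lambda^{-2}$ line up as $l - \anot$ for $l = 1, \ldots, k$, exactly matching the denominators in the statement of $\Psi(\theta)$. Once the algebra is arranged correctly, everything else is elementary manipulation of geometric series and the triangle inequality.
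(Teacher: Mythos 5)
Your proof matches the paper's almost exactly: same expansion of the $A$-product into $2^n-1$ terms via $A=T+N$, same contraction of each $N$-$T$-$\cdots$-$T$-$N$ sandwich using the nilpotent and lower-triangular identities, same bound of the outer $T$-blocks by the infinite product $\prod_{j\ge 1}\bigl(1+(\Lambda^{-2j}\theta)^{\anot}\bigr)$, and same reindexing by gap lengths so the $k$-fold sum decouples into geometric series with ratios $\Lambda^{-2(l-\anot)}$. One slip worth correcting before you fill in the details: since $A(\Lambda^{-2m}\theta)$ has off-diagonal entry $\phi(\Lambda^{-2m}\theta)$ with a built-in extra $\Lambda^{-2}$ inside (i.e.\ the argument is $\Lambda^{-2(m+1)}\theta$, not $\Lambda^{-2m}\theta$), the correct estimates are $|b_{j_i}|\le(1+\Lambda^2)\bigl(\Lambda^{-2(j_i+1)}\theta\bigr)^{1-\anot}$ and $|s_i|\le\bigl(\Lambda^{-2(j_i+2)}\theta\bigr)^{\anot}/(1-\Lambda^{-2\anot})$ --- both shifted by one relative to what you wrote --- and it is precisely these shifts that make the exponents collapse to the $(\Lambda^{-2k}\theta)^{k-\anot}$ appearing in \eqref{psi}.
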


\begin{remark} \label{rk-psiq}
  Though the error estimate $\Psi(\theta)$ looks rather cumbersome, it may be
  further bounded above, for 
  \begin{equation} 
    0 < \frac{(1+\Lambda^2) \theta/\Lambda^2 } {(1 - \Lambda^{-2\anot}) 
    (1 - \Lambda^{-2(1-\anot)})} < 1,
  \end{equation}
  by
  \begin{equation} \label{psi1}
    \Psi(\theta) < \Psi_1(\theta) := \exp\! \left( \frac{(\theta/\Lambda^2 )^\anot} 
    {1 - \Lambda^{-2\anot}} \right) \frac{ \ds \frac{(1+\Lambda^2) 
    (\theta/\Lambda^2 )^{1-\anot}} {1 - \Lambda^{-2(1-\anot)}} }
    { \ds 1 - \frac{(1+\Lambda^2) \theta/\Lambda^2 } {(1 - \Lambda^{-2\anot}) 
    (1 - \Lambda^{-2(1-\anot)})}  }.
  \end{equation}
  This is readily seen by operating the following bounds on $\Psi(\theta)$:
  \begin{equation} 
  \begin{split}
    & 1 + (\Lambda^{-2j}\theta)^\anot < \exp\! \left( \theta^\anot \Lambda^{-2\anot j} \right) ; \\
    & (\Lambda^{-2k} \theta)^{k - \anot} \le (\theta/\Lambda^2 )^{k - \anot} ; \\
    & 1 - \Lambda^{-2(l-\anot)} \ge 1 - \Lambda^{-2(1-\anot)},
  \end{split}
  \end{equation}
  and then summing a geometric series in $k$. In fact, the bound (\ref{psi1}) 
  is very generous. Tighter bounds on $\Psi(t)$ can be produced by leaving out 
  the first terms of the sum in $k$ and estimating the others by means of a 
  geometric series, in analogy to what was done for $\Psi_1(\theta)$. So, for all
  $q \ge 2$ and 
  \begin{equation} \label{base2}
    0 < \frac{(1+\Lambda^2) \Lambda^{-2q} \theta} {(1 - \Lambda^{-2\anot}) 
    (1 - \Lambda^{-2(q-\anot)})} < 1,
  \end{equation}
  we have:
  \begin{equation} \label{psiq}
  \begin{split}
    & \Psi(\theta) < \Psi_q(\theta) := \exp\! \left( \frac{(\theta/\Lambda^2 )^\anot} 
    {1 - \Lambda^{-2\anot}} \right) \times \\
    &\times \left(  \sum_{k=1}^{q-1} \frac{ (1+\Lambda^2)^k \,
    (\Lambda^{-2k} \theta)^{k - \anot} } 
    { (1 - \Lambda^{-2\anot})^{k-1} \prod_{l=1}^k (1 - \Lambda^{-2(l-\anot)}) } +
    \frac 1 { \prod_{l=1}^{q-1} (1 - \Lambda^{-2(l-\anot)} ) } \,
    \frac{ \ds \frac{(1+\Lambda^2)^q \, (\Lambda^{-2q} \theta)^{q-\anot} } 
    { (1 - \Lambda^{-2\anot})^{q-1} \, (1 - \Lambda^{-2(q-\anot)})} }
    { \ds 1 - \frac{(1+\Lambda^2) \, \Lambda^{-2q} \theta} {(1 - \Lambda^{-2\anot}) 
    (1 - \Lambda^{-2(q-\anot)})} } \right) .
  \end{split}
  \end{equation}
  The exponential factor in the above r.h.s.\ is derived in the same way as the
  corresponding term in (\ref{psi1}). The second term in parentheses is an 
  upper bound for the sum on $k$ from $q$ to $\infty$ of the summands in
  (\ref{psi}), having used that, for $k\ge q$,
   \begin{equation} 
  \begin{split}
    & (\Lambda^{-2k} \theta)^{k - \anot} \le (\Lambda^{-2q} \theta)^{k - \anot} ; \\
    & \prod_{l=1}^k \left( 1 - \Lambda^{-2(l-\anot)} \right) \ge 
    \left( 1 - \Lambda^{-2(q-\anot)} \right)^{k-q+1}
    \prod_{l=1}^{q-1} \left( 1 - \Lambda^{-2(l-\anot)} \right) .
  \end{split}
  \end{equation}
\end{remark}

\proofof{Lemma \ref{lem-est}} We shall make extensive use of the 
properties of the lower-triangular matrices $T$ and the nilpotent matrices
$N$. In fact, for $z \in \C$, set
\begin{equation} 
  T_z :=
  \left[ \begin{array}{cc}
    1 & 0 \\
    z & 1 
  \end{array} \right],
  \qquad
  N_z :=
  \left[ \begin{array}{cc}
    0 & z \\
    0 & 0 
  \end{array} \right].
\end{equation}
Clearly,
\begin{equation} \label{elem-prods}
  T_{z_1}  T_{z_2} = T_{z_1 + z_2}; \qquad
  N_{z_1}  N_{z_2} = 0; \qquad
  N_{z_1}  T_{z_2} N_{z_3} = N_{z_1 z_2 z_3}\, .
\end{equation}
The first of the above identities readily implies (\ref{prod-of-t}). Also, it is easy to
see that  $1\le \Vert T_z\Vert\le 1+|z|$ and $\Vert N_z\Vert=|z|$.

The expression $A(\Lambda^{-2(n-1)} \theta) \cdots A(\theta) - T(\Lambda^{-2(n-1)} \theta) 
\cdots T(\theta)$ results in the sum of $2^n-1$ products of $n$ matrices, each 
containing $k$ $N$-matrices and $(n-k)$ $T$-matrices, for $k = 1, \ldots n$. 
We describe how to upper-bound (in norm) the expressions for $k = 1, 2, 3$. 
The bound for general $k$ will then be apparent. Let us first set
\begin{equation}
  \phi(\theta) := \frac{1-e^{i(1+\Lambda^2) \theta/\Lambda^2 }} {(\theta/\Lambda^2 )^\anot},
\end{equation}
cf.\ (\ref{atn}). Clearly, $|\phi(\theta)| \le (1+\Lambda^2)  (\theta/\Lambda^2 )^{1-\anot}$.

\medskip

\paragraph{\underline{Case $k=1$}} For $j = 0, 1, \ldots, n-1$, 
$\| T(\Lambda^{-2j} \theta) \|
 \le 1 + (\Lambda^{-2(j+1)} \theta)^\anot$. We bound the norm of the product of the 
 $n-1$ $T$-matrices with the product of their norms, which is in turn less than
\begin{equation} \label{est-10}
  \prod_{j=0}^{n-1} \left[ 1 + (\Lambda^{-2(j+1)} \theta)^\anot \right] < 
  \prod_{j=1}^\infty \left[ 1 + (\Lambda^{-2j}\theta)^\anot \right] =: K.
\end{equation}
As for the remaining $N$-matrix, $\|N(\theta)\| \le |\phi(\theta)|$, so the sum of the
$n$ matrix products is less than
\begin{equation} 
  K \sum_{j_1=0}^{n-1} |\phi (\Lambda^{-2j_1} \theta)| \le K (1+\Lambda^2)  
  \sum_{j_1=0}^\infty (\Lambda^{-2(j_1 + 1)} \theta)^{1-\anot} = K (1+\Lambda^2) 
  \theta^{1-\anot} \frac{\Lambda^{-2(1 - \anot)}} {1 - \Lambda^{-2(1 - \anot)}}.
\end{equation}
A useful way to rewrite this bound, as we shall see later, is 
\begin{equation} 
  K \frac{ (1+\Lambda^2) (\Lambda^{-2} \theta)^{1-\anot} } 
  { 1 - \Lambda^{-2(1-\anot)} }.
\end{equation}

\medskip

\paragraph{\underline{Case $k=2$}} We have $n(n-1)/2$ products 
of the type
\begin{equation} \label{est-20}
\begin{split}
  & T(\Lambda^{-2(n-1)} \theta) \cdots T(\Lambda^{-2(j_2+1)} \theta) N(\Lambda^{-2j_2} \theta)
  T(\Lambda^{-2(j_2-1)} \theta) \cdots \times \\
  &\qquad \times \cdots T(\Lambda^{-2(j_1+1)} \theta) N(\Lambda^{-2j_1} \theta) 
  T(\Lambda^{-2(j_1-1)} \theta) \cdots T(\theta),
\end{split}
\end{equation}
for $0 \le j_1 < j_2 \le n$. In fact, by the second identity of (\ref{elem-prods}), 
the terms with $j_2 = j_1+1$ are null, so we can restrict the range of $j_1, j_2$ 
to $0 \le j_1 \le n-1$, $j_1 + 2 \le j_2 \le n$.

By the first and the third of (\ref{elem-prods}),
\begin{equation} \label{est-30}
\begin{split}
  &\left\| N(\Lambda^{-2j_2} \theta) T(\Lambda^{-2(j_2-1)} \theta) \cdots 
  T(\Lambda^{-2(j_1+1)} \theta) N(\Lambda^{-2j_1} \theta) \right\| \\
  &= \left\| N_{ \phi(\Lambda^{-2j_1} \theta) \left( \sum_{m = j_1+1}^{j_2-1} 
  (\Lambda^{-2(m+1)} \theta)^\anot \right) \phi(\Lambda^{-2j_1} \theta) } \right\| \\ 
  &= |\phi(\Lambda^{-2j_1} \theta)| \left( \sum_{m = j_1+1}^{j_2-1} 
  (\Lambda^{-2(m+1)} \theta)^\anot \right) |\phi(\Lambda^{-2j_1} \theta)| .
\end{split}
\end{equation}
On the other hand, 
\begin{equation} 
  \sum_{m = j_1+1}^{j_2-1} (\Lambda^{-2(m+1)} \theta)^\anot \le \theta^\anot \!\!
  \sum_{m = j_1+2}^\infty (\Lambda^{-2\anot})^m = 
  \frac{ (\Lambda^{-2\anot})^{j_1+2} \, \theta^\anot } { 1 - \Lambda^{-2\anot} }.
\end{equation}
The contribution of the remaining $T$-matrices is again (generously) estimated 
by the constant $K$ defined in (\ref{est-10}).

So the norm of the sum of all the terms of type (\ref{est-20}) is less than
\begin{equation} 
\begin{split}
  &K \sum_{j_1=0}^\infty \, \sum_{j_2=j_1+2}^\infty (1+\Lambda^2) 
  (\Lambda^{-2(j_2+1)} \theta)^{1-\anot} \, \frac{ (\Lambda^{-2\anot})^{j_1+2} \, 
  \theta^\anot } {1 - \Lambda^{-2\anot} } \, (1+\Lambda^2) 
  (\Lambda^{-2(j_1+1)} \theta)^{1-\anot} \\
  &= K\,  \frac{ (1+\Lambda^2) ^2 (\Lambda^{-4} \theta)^{2-\anot} }
  { (1 - \Lambda^{-2\anot}) (1 - \Lambda^{-2(1-\anot)}) 
  (1 - \Lambda^{-2(2-\anot)}) } .
\end{split}
\end{equation}

\medskip

\paragraph{\underline{Case $k=3$}} First of all, observe that (\ref{elem-prods}) 
implies 
\begin{equation} \label{elem-prod2}
  N_{z_1}  T_{z_2} N_{z_3} T_{z_4} N_{z_5} = N_{z_1 z_2 \cdots z_5}
\end{equation}
and all analogous identities for the product of an odd number of alternating 
$N$- and $T$-matrices.

In analogy to (\ref{est-20})-(\ref{est-30}), all the $\binom n 3$ products
with 3 $T$-matrices contain the subproduct
\begin{equation} 
  N(\Lambda^{-2j_3} \theta) T(\Lambda^{-2(j_3-1)} \theta) 
  \cdots T(\Lambda^{-2(j_2+1)} \theta)
  N(\Lambda^{-2j_2} \theta) T(\Lambda^{-2(j_2-1)} \theta) 
  \cdots T(\Lambda^{-2(j_1+1)} \theta) 
  N(\Lambda^{-2j_1} \theta) ,
\end{equation}
whose norm is estimated in the same way as (\ref{est-30}) (as justified
before, one discards the cases $j_2=j_1+1$, $j_3=j_2+1$). Once
again, the contribution of all the remaining $T$-matrices is estimated
by $K$. Thus the sum of all the terms with $k=3$ is bounded in norm by
\begin{equation} 
\begin{split}
  &K \sum_{j_1=0}^\infty \, \sum_{j_2=j_1+2}^\infty \, \sum_{j_3=j_2+2}^\infty 
  (1+\Lambda^2)  (\Lambda^{-2(j_3+1)} \theta)^{1-\anot} \, 
  \frac{ (\Lambda^{-2\anot})^{j_2+2} \, \theta^\anot } {1 - \Lambda^{-2\anot} } \times \\
  &\qquad \times (1+\Lambda^2)  (\Lambda^{-2(j_2+1)} \theta)^{1-\anot} \, 
  \frac{ (\Lambda^{-2\anot})^{j_1+2} \, \theta^\anot } {1 - \Lambda^{-2\anot} } \, 
  (1+\Lambda^2)  (\Lambda^{-2(j_1+1)} \theta)^{1-\anot} = \\
  &= K \, \frac{ (1+\Lambda^2) ^3 (\Lambda^{-6} \theta)^{3-\anot} }
  { (1 - \Lambda^{-2\anot})^2 (1 - \Lambda^{-2(1-\anot)}) 
  (1 - \Lambda^{-2(2-\anot)}) (1 - \Lambda^{-2(3-\anot)}) } .
\end{split}
\end{equation}

Applying the above arguments to the case of a general $k$ (assuming
$n \ge k$) proves Lemma \ref{lem-est}.
\qed

\medskip

Here's how to use Lemma \ref{lem-est} to give a computer-assisted proof of 
Proposition \ref{prop-conj} and make Conjecture \ref{conjecture} quite believable. 

Suppose that, for two relatively small values $\theta_1, \theta_2 > 0$, one is able to
give relatively tight, certified (hence, rigorous), positive upper and lower bounds 
for $|\ell(\theta_j)|$:
\begin{equation} \label{cert-est}
  0 < B_j^- \le |\ell(\theta_j)| \le B_j^+,
\end{equation}
for $j=1,2$. It follows from (\ref{fundamental}) and Lemma \ref{lem-est} that
\begin{equation} 
  \frac{\ell(\Lambda^{-2n} \theta)}  {\ell(\theta)} = \frac{ 1 + r_{11}^{(n)}(\theta) + \ell(\theta)^{-1} 
  r_{12}^{(n)}(\theta) } { \ell(\theta) \frac{1-\Lambda^{-2\anot n}} {1 - \Lambda^{-2\anot}} 
  (\theta/\Lambda^2 )^\anot + \ell(\theta) r_{21}^{(n)}(\theta) +1 + r_{22}^{(n)}(\theta) },
\end{equation}
where $r_{kl}^{(n)}(\theta)$ ($k,l = 1,2$) are the entries of the matrix 
\begin{equation} 
  A(\Lambda^{-2(n-1)} \theta) \cdots A(\theta) - T(\Lambda^{-2(n-1)} \theta) \cdots T(\theta).
\end{equation}
So $| r_{kl}^{(n)}(\theta) | \le \Psi(\theta)$, for all $n \in \Z^+$. Therefore, given a 
convenient explicit bound $\Psi_q(\theta) > \Psi(\theta)$, as presented in Remark 
\ref{rk-psiq}, we have:
\begin{equation} \label{est}
\begin{split} 
  & \left| \frac{\ell(\Lambda^{-2n} \theta_2)} {\ell(\Lambda^{-2n} \theta_1)} \cdot 
  \frac{\ell(\theta_1)} {\ell(\theta_2)} \right| \\
  & \quad > \frac{ 1 - (1 + 1/B_2^-) \Psi_q(\theta_2) } { 1 + (1 + 1/B_1^-) \Psi_q(\theta_1) } 
  \cdot \frac{ 1 - (1 +  B_1^+) \Psi_q(\theta_1) - B_1^+ (1-\Lambda^{-2\anot})^{-1}
  (\theta_1/\Lambda^2 )^\anot } { 1 + (1 +  B_2^+ ) \Psi_q(\theta_2) + B_2^+ 
  (1-\Lambda^{-2\anot})^{-1} (\theta_2/\Lambda^2 )^\anot }.
\end{split}
\end{equation}
Suppose one is now able to certify that the above r.h.s., which is explicit and
does not depend on $n$, is bigger than some $B \in (B_1^+/B_2^-, 1)$. They 
would conclude that 
\begin{equation} \label{end}
  \liminf_{n \to \infty} \left| \frac{\ell(\Lambda^{-2n} \theta_2)} {\ell(\Lambda^{-2n} \theta_1)} 
  \right| = \left| \frac{\ell(\theta_2)} {\ell(\theta_1)} \right| \cdot \liminf_{n \to \infty} 
  \left| \frac{\ell(\Lambda^{-2n} \theta_2)} {\ell(\Lambda^{-2n} \theta_1)} \cdot 
  \frac{\ell(\theta_1)} {\ell(\theta_2)} \right| \ge \frac{B_2^-} {B_1^+} B > 1,
\end{equation}
which implies, via definition \eqref{def:ctheta}, that $c_{\theta_1} \ne c_{\theta_2}$.

\medskip

Let us carry out this program (minus the certification of our numerics) 
for several values of $\Lambda$ and $\anot$, thus practically proving
Proposition \ref{prop-conj}. 

We first observe that there are values of $\theta$ for which $\ell(\theta)$ is explicitly 
known. In fact, $Z$ takes values in $(1+\Lambda^2) + (\Lambda^2 +
\Lambda^4)\N$. (This readily follows from the definition of $Z$, cf.\ Section 
\ref{SecStableZ}, and the assumptions $\pupup = 
\pdown + \pup = 1$: for 1-level vertical excursions of the 
walker, $Z = 1+\Lambda^2$; for 2-level excursions, $Z$ can take the values 
$1+\Lambda^2 +\Lambda^4+ \Lambda^2$ or $1+\Lambda^2+\Lambda^4
+\Lambda^2+\Lambda^4+\Lambda^2$, etc.) Therefore, for $k\in\mathbb Z$,
\begin{equation}
  \varphi_Z\! \left( \frac{2\pi k} {\Lambda^2 + \Lambda^4} \right) = 
  e^{i2\pi k/\Lambda^2} .
\end{equation}
Equivalently,
\begin{equation} \label{est-40} 
  \ell\! \left( \frac{2\pi k} {\Lambda^2 + \Lambda^4} \right) = \left( 1 - 
  e^{i2\pi k/\Lambda^2} \right)\left( \frac{2\pi k} {\Lambda^2 + \Lambda^4} 
  \right)^{-\anot} .  
\end{equation}
One could in principle choose two positive integers $k_1, k_2$ and set
$\theta_j := 2\pi k_j / (\Lambda^2 + \Lambda^4)$ ($j=1,2$), provided that:
\begin{itemize}
  \item $\theta_1/\theta_2$ is not a power of $\Lambda^2$, for in that case $c_{\theta_1} 
  = c_{\theta_2}$ by definition of $c_\theta$;
  \item $0 < |\ell(\theta_1)| < |\ell(\theta_2)|$, which is required by the above scheme,
  cf.\ the choice of $B$ and (\ref{end}). (Of course, the real conditions
  are $|\ell(\theta_1)| \ne |\ell(\theta_2)|$ and $|\ell(\theta_j)| \ne 0$, as $\theta_1$ and $\theta_2$ 
  can be interchanged.)
\end{itemize}
In this case, the bounds in (\ref{cert-est}) can be taken infinitely tight: 
$B_j^\pm := |\ell(\theta_j)|$. The problem, however, is that, even for small 
values of $k_j$, the value of $\theta_j$ is nowhere near as small as for the 
r.h.s.\ of (\ref{est}) to admit a usable lower bound $B$, for this requires 
$\Psi_q(\theta_j)$ to be quite small, which in turn requires $\theta_j$ to be quite 
small too.

On the other hand, starting from known values of $\theta$ and $\ell(\theta)$, one
can use (\ref{fundamental}) to compute, symbolically and/or numerically,
as many values of $\ell(\Lambda^{-2n} \theta)$ ($n \in \N$) as one's computing
power permits, to try and apply the above scheme to some of them, in the 
regime where $\Lambda^{-2n} \theta$ is small enough. 

Here we have followed this specific procedure. First of all, observing
that the term $\Lambda^{-2\anot}$ is ubiquitous in our computations,
cf.~Lemma \ref{lem-est} and Remark \ref{rk-psiq},
we have expressed our numerics in terms of the parameter
$\beta := \Lambda^{-2\anot} = \Lambda^{-\a} \in (\Lambda^{-2}, 1)$,
which is equivalent to $\anot \in (0,1)$. For $\Lambda = 2,3$ and 
several values of $\beta$, we have chosen suitable 
pairs of the form $s_j := 2\pi k_j / (\Lambda^2 + \Lambda^4)$ ($j=1,2$), 
for some $k_j \in \Z^+$. For increasing values of $n$, we have numerically 
checked whether the pair $\theta_j := \Lambda^{-2n} s_j$ would make our 
scheme work (namely, an accurate lower estimate of the r.h.s.\ of 
(\ref{est}) is bigger that an accurate upper estimate of $| \ell(\theta_1)/\ell(\theta_2) |$). 
We have always found a (minimum) value of $n$ that achieved the goal, 
and reported all the corresponding data in the tables below.

\medskip

\paragraph{\underline{Cases $\Lambda=2$}} For all these cases we have 
chosen $s_1 := \pi/5$, $s_2 := 2\pi/5$, corresponding respectively to 
$k = 2,4$ in (\ref{est-40}), whence $\ell(s_1) = 2 (\pi/5)^{-\anot}$,
$\ell(s_2) = 0$. The values of $\theta_j = 4^{-n} s_j$ reported 
below are given by the smallest $n$ for which the scheme works, as 
described above.

\medskip

\begin{center}
\small
\renewcommand{\arraystretch}{1.4}
\begin{tabular}{|c|c|c|c|c|c|c|c|}
  \hline
  $\beta$ & $\anot$ & $\theta_1$ & $\theta_2$ & $\ell(\theta_1)$ & $\ell(\theta_2)$ & 
  $\ds \left| \frac{\ell(\theta_1)}{\ell(\theta_2)} \right| $ & 
  \begin{minipage}{36pt} 
  \vspace*{2pt}
  \begin{center}
    r.h.s.\ of (\ref{est}) $^{(q=2)}$
  \end{center}
  \end{minipage} \\
  \hline\hline
  0.27 & 0.94448 & $4^{-191} \frac \pi 5$ & $4^{-191} \frac{2\pi} 5$ & 
  $4.7468 - 56.570i$ & $5.4773 - 56.865i$ & 0.99372 & 0.99407 \\
  \hline
  0.3 & 0.86848 & $4^{-65} \frac \pi 5$ & $4^{-65} \frac{2\pi} 5$ & 
  $3.9459 - 19.540i$ & $4.5780 - 19.765i$ & 0.98258 & 0.98526 \\
  \hline
  0.4 & 0.66096 & $4^{-18} \frac \pi 5$ & $4^{-18} \frac{2\pi} 5$ & 
  $2.30871 - 4.0115i$ & $2.6873 - 4.0863i$ & 0.94635 & 0.96133 \\
  \hline
  0.5 & 0.5 & $4^{-10} \frac \pi 5$ & $4^{-10} \frac{2\pi} 5$ & 
  $1.4283 - 1.4456i$ & $1.6410 - 1.4570i$ & 0.92605 & 0.93946 \\
  \hline
  0.6 & 0.36848 & $4^{-8} \frac \pi 5$ & $4^{-8} \frac{2\pi} 5$ & 
  $0.89810 - 0.60217i$ & $1.0150 - 0.60137i$ & 0.91651 & 0.92128 \\
  \hline
  0.7 & 0.25729 & $4^{-11} \frac \pi 5$ & $4^{-11} \frac{2\pi} 5$ & 
  $0.54303 - 0.23698i$ & $0.59588 - 0.22898i$ & 0.92814 & 0.94599 \\
  \hline
  0.8 & 0.16096 & $4^{-18} \frac \pi 5$ & $4^{-18} \frac{2\pi} 5$ & 
  $0.29541 - 0.076955i$ & $0.31356 - 0.071333i$ & 0.94930 & 0.95659 \\
  \hline
  0.9 & 0.076002 & $4^{-43} \frac \pi 5$ & $4^{-43} \frac{2\pi} 5$ & 
  $0.12122 - 0.014441i$ & $0.12461 - 0.012843i$ & 0.97450 & 0.97650 \\
  \hline
\end{tabular}
\end{center}

\medskip

\paragraph{\underline{Cases $\Lambda=3$}} For these cases,
$\{ s_1, s_2 \} = \{ \pi/5, 2\pi/5 \}$, corresponding to $k = 9, 18$ in 
(\ref{est-40}) and thus giving $\ell(s_1) = \ell(s_2) = 0$. The 
assignments of $s_j$ has been chosen in each case so that, for the 
resulting $\theta_j = 9^{-n} s_j$, $|\ell(\theta_1)| <  |\ell(s_2)|$. 
Once again, the selected value of $n$ is the smallest for which the 
scheme works.

\medskip

\begin{center}
\small
\renewcommand{\arraystretch}{1.4}
\begin{tabular}{|c|c|c|c|c|c|c|c|}
  \hline
  $\beta$ & $\anot$ & $\theta_1$ & $\theta_2$ & $\ell(\theta_1)$ & $\ell(\theta_2)$ & 
  $\ds \left| \frac{\ell(\theta_1)}{\ell(\theta_2)} \right| $ & 
  \begin{minipage}{36pt} 
  \vspace*{2pt}
  \begin{center}
    r.h.s.\ of (\ref{est}) $^{(q=2)}$
  \end{center}
  \end{minipage} \\
  \hline\hline
  0.13 & 0.92854 & $9^{-91} \frac{2\pi} 5$ & $9^{-91} \frac \pi 5$ & 
  $7.1152 - 54.501i$ & $4.5850 - 55.227i$ & 0.99180& 0.99185 \\
  \hline
  0.2 & 0.73249 & $9^{-24} \frac \pi 5$ & $9^{-24} \frac{2\pi} 5$ & 
  $2.8396 - 9.7361i$ & $4.4837 - 9.1033i$ & 0.99942& 0.99959 \\
  \hline
  0.3 & 0.54795 & $9^{-9} \frac \pi 5$ & $9^{-9} \frac{2\pi} 5$ & 
  $1.8538 - 3.6337i$ & $2.7803 - 3.0738i$ & 0.98422 & 0.98490 \\
  \hline
  0.4 & 0.41702 & $9^{-8} \frac \pi 5$ & $9^{-8} \frac{2\pi} 5$ & 
  $1.3517 - 1.8439i$ & $1.8460 - 1.3854i$ & 0.99053 & 0.99636 \\
  \hline
  0.5 & 0.31546 & $9^{-9} \frac{2\pi} 5$ & $9^{-9} \frac \pi 5$ & 
  $1.2385 - 0.66371i$ & $1.0090 - 0.99722i$ & 0.99050 & 0.99464 \\
  \hline
  0.6 & 0.23249 & $9^{-9} \frac{2\pi} 5$ & $9^{-9} \frac \pi 5$ & 
  $0.81913 - 0.31535i$ & $0.73372 - 0.52578i$ & 0.97239 & 0.97407 \\
  \hline
  0.7 & 0.16233 & $9^{-12} \frac{2\pi} 5$ & $9^{-12} \frac \pi 5$ & 
  $0.51139 - 0.13439i$ & $0.49445 - 0.24327i$ & 0.95954 & 0.96633 \\
  \hline 
  0.8 & 0.10156 & $9^{-19} \frac{2\pi} 5$ & $9^{-19} \frac \pi 5$ & 
  $0.28495 - 0.045883i$ & $0.28805 - 0.087170i$ & 0.95902 & 0.96697 \\
  \hline 
  0.9 & 0.047952 & $9^{-42} \frac{2\pi} 5$ & $9^{-42} \frac \pi 5$ & 
  $0.11951 - 0.0089062i$ & $0.12198 - 0.017123i$ & 0.97295 & 0.97430 \\
  \hline 
\end{tabular}
\end{center}

\medskip

All computations have been performed by MATLAB (rel.\ 2024b) with 
128-digit variable-precision arithmetic. All reals are presented with 5 
significant digits. Higher values of $q$ have been tried for the estimate 
of the r.h.s.\ of (\ref{est}) with little to no improvement.

\footnotesize

\end{document}